\newtheorem{theorem}{Theorem}
\newtheorem{proposition}[theorem]{Proposition}
\newtheorem{lemma}[theorem]{Lemma}
\newtheorem{corollary}[theorem]{Corollary}
\newtheorem*{definition}{Definition}
\newtheorem{remark}{Remark}
\newtheorem{exam}{Example}
\newenvironment{proofref}[1]{\noindent{\bf Proof (of~#1):}}{\qed\medskip}
\renewenvironment{proof}{\noindent{\bf Proof:}}{\qed\medskip}
\renewcommand{\leq}{\leqslant}
\renewcommand{\geq}{\geqslant}
\renewcommand{\P}{\mathbb{P}}
\renewcommand{\phi}{\varphi}
\renewcommand{\epsilon}{\varepsilon}
\renewcommand{\hat}{\widehat}
\newcommand{\defeq}{\stackrel{\mbox{\scriptsize \rm def}}{=}}
\newcommand{\cA}{\mathcal{A}}
\newcommand{\cB}{\mathcal{B}}
\newcommand{\cE}{\mathcal{E}}
\newcommand{\cF}{\mathcal{F}}
\newcommand{\cG}{\mathcal{G}}
\newcommand{\cH}{\mathcal{H}}
\newcommand{\cL}{\mathcal{L}}
\newcommand{\cN}{\mathcal{N}}
\newcommand{\cX}{\mathcal{X}}
\newcommand{\E}{\mathbb{E}}
\newcommand{\Q}{\mathbb{Q}}
\newcommand{\R}{\mathbb{R}}
\newcommand{\KL}{\mathop{\mathrm{KL}}}
\newcommand{\kl}{\mathop{\mathrm{kl}}}
\newcommand{\e}{\mathrm{e}}
\newcommand{\indicator}[1]{\mathds{1}_{#1}}
\newcommand{\ind}[1]{\indicator{#1}}
\renewcommand{\d}{\,\mathrm{d}}
\newcommand{\dd}{\mathrm{d}}
\newcommand{\lm}{\mathfrak{m}}
\newcommand{\Ber}{\mathrm{Ber}}
\newcommand{\ol}{\overline}
\newcommand{\Div}{\mathrm{Div}_{\! f}}
\renewcommand{\div}{\mathrm{div}_{\! f}}
\renewcommand{\tilde}{\widetilde}
\newcommand{\norm}[1]{\left\Vert #1 \right\Vert}
\newcommand{\Pac}{\P_{\mbox{\scriptsize \rm ac}}}
\newcommand{\Psing}{\P_{\mbox{\scriptsize \rm sing}}}
\newcommand{\ac}{\mbox{\scriptsize \rm ac}}
\newcommand{\sing}{\mbox{\scriptsize \rm sing}}
\newcommand{\transp}{\mbox{\tiny T}}
\newcommand{\Rb}{R_{\mbox{\tiny \rm Bayes}}}
\renewcommand*{\@seccntformat}[1]{\csname the#1\endcsname.\quad}
\newcommand{\papertitle}{Fano's inequality for random variables}
\title{\papertitle\footnote{The authors would like to thank Aur{\'e}lien Garivier, Jean-Baptiste Hiriart-Urruty and Vincent Tan Yan Fu for their insightful comments and suggestions. This work was partially supported by the CIMI (Centre International de Math\'{e}matiques et d'Informatique) Excellence program. The authors acknowledge  the  support of the French Agence Nationale  de la Recherche (ANR), under  grants ANR-13-BS01-0005 (project SPADRO) and ANR-13-CORD-0020 (project ALICIA). Gilles Stoltz would like to thank Investissements d'Avenir (ANR-11-IDEX-0003/Labex Ecodec/ANR-11-LABX-0047) for financial support.
}}
\author{
S{\'e}bastien Gerchinovitz \\
Pierre M{\'e}nard \vspace{.25cm} \\
IMT --- Universit{\'e} Paul Sabatier, Toulouse, France \vspace{.25cm} \\
{sebastien.gerchinovitz@math.univ-toulouse.fr} \\
{pierre.menard@math.univ-toulouse.fr} \vspace{.25cm} \\
\& \vspace{.25cm} \\
Gilles Stoltz \vspace{.25cm} \\
Laboratoire de Math{\'e}matiques d'Orsay \\
Universit{\'e} Paris-Sud, CNRS, Universit{\'e} Paris-Saclay, Orsay, France \vspace{.25cm} \\
GREGHEC --- HEC Paris, CNRS \vspace{.25cm} \\
{gilles.stoltz@math.u-psud.fr}
}
\date{\today \vspace{-.25cm}}
\begin{document}
\setlist{noitemsep,topsep=0pt,parsep=0pt,partopsep=0pt}
\renewcommand\labelitemi{--}

\pagestyle{fancy}
\renewcommand{\footrulewidth}{0.4pt}

\maketitle

\vspace*{3pt}\noindent\rule{\linewidth}{0.8pt}\vspace{5pt}
\thispagestyle{empty}

\begin{abstract}
We extend Fano's inequality,
which controls the average probability of events in terms of
the average of some $f$--divergences, to work with arbitrary
events (not necessarily forming a partition) and even with arbitrary $[0,1]$--valued
random variables, possibly in continuously infinite number.
We provide two applications of these extensions, in which the consideration
of random variables is particularly handy: we offer
new and elegant proofs for existing lower bounds, on
Bayesian posterior concentration (minimax or distribution-dependent) rates
and on the regret in non-stochastic sequential learning.
\end{abstract}

{\small
\ \\
\noindent MSC 2000 subject classifications. Primary-62B10; secondary-62F15, 68T05. \smallskip \\
Keywords: Multiple-hypotheses testing, Lower bounds, Information theory, Bayesian posterior concentration
}

\newpage
\section{Introduction}

Fano's inequality is a popular information-theoretical result that provides a lower bound on worst-case error probabilities in multiple-hypotheses testing problems. It has important consequences in information theory \citep{CoTh06} and related fields. In mathematical statistics, it has become a key tool to derive lower bounds on minimax (worst-case) rates of convergence for various statistical problems such as nonparametric density estimation, regression, and classification (see, e.g., \citealp{Tsy-09-NonParametric,Massart03StFlour}).

Multiple variants of Fano's inequality have been derived in the literature. They can handle a finite, countable, or even continuously infinite number of hypotheses. Depending on the community, it has been stated in various ways. In this article, we focus on statistical versions of Fano's inequality. For instance, its most classical version states that for all sequences of $N \geq 2$ probability distributions $\P_1,\ldots,\P_N$ on the same measurable space $(\Omega,\cF)$, and all events $A_1,\ldots,A_N$ forming a partition of $\Omega$, \vspace{-.2cm}
\[
\frac{1}{N} \sum_{i=1}^N \P_i(A_i) \leq \frac{\displaystyle{\frac{1}{N} \inf_{\Q} \sum_{i=1}^N \KL(\P_i,\Q)} + \ln(2)}{\ln(N)}\,, \vspace{-.1cm}
\]
where the infimum in the right-hand side is over all probability distributions $\Q$ on $(\Omega,\cF)$.
The link to multiple-hypotheses testing is by considering events of the form $A_i = \{ \hat{\theta}=i\}$, where
$\hat{\theta}$ is an estimator of $\theta$. Lower bounds on the average of the $\P_i\bigl(\hat{\theta} \neq i\bigr)$
are then obtained.

Several extensions to more complex settings were derived in the past. For example, \citet{HV94} addressed the case of countably infinitely many probability distributions, while \citet{DuWa-13-Fanocontinuum} and \citet{ChenETAL-14-BayesRiskLowerBounds} further generalized Fano's inequality to continuously infinitely many distributions; see also~\citet{AeSaZh10}. \citet{Gus-03-Fano} extended Fano's inequality in two other directions, first by considering $[0,1]$--valued random variables $Z_i$ such that $Z_1 + \ldots + Z_N = 1$, instead of the special case $Z_i = \indicator{A_i}$, and second, by considering $f$--divergences.
All these extensions, as well as others recalled in Section~\ref{sec:ref-beg}, provide a variety of tools that adapt nicely to the variety of statistical problems.

\paragraph{Content and outline of this article.}
In this article, we first revisit and extend Fano's inequality and then provide new applications.
More precisely, Section~\ref{sec:divf} recalls the definition of $f$--divergences and states our main ingredient
for our extended Fano's inequality, namely, a data-processing inequality with expectations
of random variables. The short Section~\ref{sec:FanoExample} is a pedagogical version
of the longer Section~\ref{sec:prooftech}, where we explain and illustrate our two-step methodology
to establish new versions of Fano's inequality: a Bernoulli reduction is followed by
careful lower bounds on the $f$--divergences between two Bernoulli distributions.
In particular, we are able to extend Fano's inequality to both continuously many distributions $\P_{\theta}$ and arbitrary events $A_\theta$ that
do not necessarily form a partition or to arbitrary $[0,1]$--valued random variables $Z_{\theta}$ that are not required to sum up (or integrate) to $1$.
We also point out that the alternative distribution $\Q$ could vary with $\theta$.
We then move on in Section~\ref{sec:applis}
to our main new statistical applications, illustrating in particular that it is handy to be able to consider random variables not necessarily
summing up to~$1$. The two main such applications deal with Bayesian posterior concentration lower bounds and a regret lower
bound in non-stochastic sequential learning. (The latter application, however, could be obtained by the extension by~\citealp{Gus-03-Fano}.)
Section~\ref{sec:otherapplications} presents two other applications
which---perhaps surprisingly---follow from the special case $N=1$ in Fano's inequality.
One of these applications is about distribution-dependent lower bounds on
Bayesian posterior concentration (elaborating on results by~\citealp{HoRoSH-15-AdaptivePosteriorConcentrationRates}).
The end of the article provides a review of the literature in Section~\ref{sec:ref-beg};
it explains, in particular, that the Bernoulli reduction lying at the heart of our analysis was
already present, at various levels of clarity, in earlier works.
Finally, Section~\ref{sec:proofLBKL} provides
new and simpler proofs of some important lower bounds on the Kullback-Leibler
divergence, the main contributions being a short and enlightening
proof of the refined Pinsker's inequality by~\citet{OrWe-05-PinskerDistributionDependent},
and a sharper \citet{BrHu-78-RisqueMinimax,BrHu-79-RisqueMinimax} inequality.

\newpage
\section{Data-processing inequality with expectations of random variables}
\label{sec:divf}

This section collects the definition of and some well-known results about $f$--divergences, a special case of which is given by the
Kullback-Leibler divergence. It also states a recent and less known result, called the data-processing inequality with expectations of random variables;
it will be at the heart of the derivation of our new Fano's inequality for random variables.

\subsection{Kullback-Leibler divergence}
Let $\P,\Q$ be two probability distributions on the same measurable space $(\Omega,\cF)$. We write $\P \ll \Q$ to indicate that $\P$ is absolutely continuous with respect to $\Q$. The Kullback-Leibler divergence $\KL(\P,\Q)$ is defined by
\[
\KL(\P,\Q) = \left\{\begin{array}{ll}
	\displaystyle \bigintsss_{\Omega} \ln\!\left(\frac{\d \P}{\d \Q}\right)\! \d \P & \textrm{if $\P \ll \Q$}; \smallskip \\
	+ \infty & \textrm{otherwise}.
\end{array} \right.
\]
We write $\Ber(p)$ for the Bernoulli distribution with parameter $p$. We also use the usual measure-theoretic conventions in $\R \cup \{+\infty\}$;
in particular $0 \times (+\infty) = 0$ and $1/0 = +\infty$, as well as $0/0 = 0$. We also set $\ln(0) = -\infty$ and $0 \ln(0) = 0$.

The Kullback-Leibler divergence function $\kl$ between Bernoulli distributions equals,
for all $(p,q) \in [0,1]^2$,
\[
\kl(p,q) \defeq \KL\bigl(\Ber(p),\Ber(q) \bigr) = p \ln \!\left( \frac{p}{q} \right) + (1-p) \ln \!\left( \frac{1-p}{1-q} \right).
\]

Kullback-Leibler divergences are actually a special case of $f$--divergences with $f(x) = x \ln x$;
see \citealp{Csi-63-fdivergence}, \citealp{AlSi-66-fdivergences} and \citealp{Gus-03-Fano} for further details.

\subsection{$f$--divergences}
Let $f : (0,+\infty) \to \R$ be any convex function satisfying $f(1)=0$. By convexity, we can define \[ f(0) \defeq \lim_{t \downarrow 0} f(t) \in \R \cup \{+\infty\}\,;\] the extended function $f: [0,+\infty) \to \R \cup \{+\infty\}$ is still convex.

Before we may actually state the definition of $f$--divergences,
we recall the definition of the maximal slope $M_f$ of a convex function $f$
and provide notation for the Lebesgue decomposition of measures.

\paragraph{Maximal slope.}
For any $x > 0$, the limit
\[
\lim_{t \to +\infty} \frac{f(t)-f(x)}{t-x}
= \sup_{t > 0} \frac{f(t)-f(x)}{t-x} \in [0,+\infty]
\]
exists since (by convexity) the slope $\bigl( f(t) - f(x) \bigr)/(t-x)$ is non-decreasing as $t$ increases. Besides, this limit does not depend on $x$ and equals
\[
M_f \defeq \lim_{t \to +\infty} \frac{f(t)}{t} \in (-\infty,+\infty]\,,
\]
which thus represents the maximal slope of $f$.
A useful inequality following from the two equations above with $t=x+y$ is
\[
\forall \, x > 0, \ y > 0\,,
\qquad \frac{f(x+y) - f(x)}{y} \leq M_f \,.
\]
Put differently,
\begin{equation}
\label{eq:Mf}
\forall \, x \geq 0, \ y \geq 0, \qquad f(x+y) \leq f(x) + y\,M_f\,,
\end{equation}
where the extension to $y = 0$ is immediate and the one to $x = 0$ follows by continuity of $f$ on $(0,+\infty)$, which itself follows from its convexity.

\paragraph{Lebesgue decomposition of measures.}
We recall that $\ll$ denotes the absolute continuity between measures
and we let $\bot$ denote the fact that two measures are singular.
For distributions
$\P$ and $\Q$ defined on the same measurable space $(\Omega,\cF)$,
the Lebesgue decomposition of $\P$ with respect to~$\Q$ is denoted by
\begin{equation}
\label{eq:Lbgdec}
\P = \Pac + \Psing\,, \qquad \mbox{where} \qquad
\Pac \ll \Q \quad \mbox{and} \quad \Psing \bot \Q \,,
\end{equation}
so that $\Pac$ and $\Psing$ are both sub-probabilities (positive measures
with total mass smaller than or equal to $1$) and, by definition,
\[
\frac{\d \P}{\d \Q} = \frac{\d \Pac}{\d \Q}\,.
\]

\paragraph{Definition of $f$--divergences.}
The existence of the integral in the right-hand side of
the definition below follows from
the general form of Jensen's inequality stated in Lemma~\ref{lem:Jensen2} (Appendix~\ref{sec:Jensen}) with $\phi=f$ and $C=[0,+\infty)$.

\begin{definition}
\label{def:fdiv}
Given a convex function $f : (0,+\infty) \to \R$ satisfying $f(1)=0$,
the $f$--divergence $\Div(\P,\Q)$ between two probability distributions
on the same measurable space $(\Omega,\cF)$ is defined as
\begin{equation}
\Div(\P,\Q) =
\bigintsss_{\Omega} f\!\left(\frac{\d \P}{\d \Q}\right)\! \d \Q
+ \Psing(\Omega) \, M_f\,.
\end{equation}
\end{definition}

Jensen's inequality of Lemma~\ref{lem:Jensen2}, together with~\eqref{eq:Mf}, also indicates that
$\Div(\P,\Q) \geq 0$. Indeed,
\[
\bigintsss_{\Omega} f\!\left(\frac{\d \P}{\d \Q}\right)\! \d \Q
\geq f\!\left(\int_{\Omega} \frac{\d \P}{\d \Q} \d \Q \right)
= f\bigl( \Pac(\Omega) \bigr)\,,
\]
so that by~\eqref{eq:Mf},
\[
\Div(\P,\Q) \geq f\bigl( \Pac(\Omega) \bigr) +
\Psing(\Omega) \, M_f \geq f\bigl( \Pac(\Omega) + \Psing(\Omega) \bigr) = f(1) = 0\,. \\
\]
\medskip

Concrete and important examples of $f$--divergences, such as
the Hellinger distance and the $\chi^2$--divergence,
are discussed in details in Section~\ref{sec:prooftech}. The Kullback-Leibler divergence corresponds to $\Div$ with the function $f : x \mapsto x \, \ln(x)$.
We have
$M_f = +\infty$ for the Kullback-Leibler and $\chi^2$--divergences,
while $M_f = 1$ for the Hellinger distance.

\subsection{The data-processing inequality and two major consequences}
\label{sec:DPfdiv}

The data-processing inequality (also called contraction of relative entropy in
the case of the Kullback-Leibler divergence)
indicates that transforming the data at hand can only reduce the ability to distinguish between two probability distributions.

\begin{lemma}[Data-processing inequality]
\label{lem:dataProcessingIneq--fdiv}
Let $\P$ and $\Q$ be two probability distributions on the same measurable space $(\Omega,\cF)$, and let $X$ be any random variable on $(\Omega,\cF)$. Denote by $\P^X$ and $\Q^X$ the associated pushforward measures (the laws of $X$ under $\P$ and $\Q$). Then,
\[
\Div\!\left(\P^X,\Q^X\right) \leq \Div(\P,\Q)\,.
\]
\end{lemma}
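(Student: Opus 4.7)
\medskip
The plan is to reduce to the absolutely continuous case $\P \ll \Q$ via the Lebesgue decomposition $\P = \Pac + \Psing$ given in~\eqref{eq:Lbgdec}, handle the absolutely continuous part by identifying a pushforward Radon-Nikodym derivative with a $\Q$-conditional expectation and invoking the conditional Jensen inequality, and absorb the contribution of the singular part using the maximal-slope bound~\eqref{eq:Mf}.

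For the absolutely continuous part, set $g = \dd\Pac/\dd\Q$. Since $(\Pac)^X \ll \Q^X$, the Doob--Dynkin lemma produces a nonnegative measurable function $\tilde{h}$ on the target space such that $\tilde{h} = \dd(\Pac)^X/\dd\Q^X$, and the identity $\tilde{h} \circ X = \E_\Q\bigl[g \mid \sigma(X)\bigr]$ holds $\Q$--almost surely; both facts follow from testing $\int_{X^{-1}(B)} g \,\dd\Q = (\Pac)^X(B) = \int_B \tilde{h} \,\dd\Q^X$ over measurable sets $B$ in the target space. The conditional Jensen inequality then yields $f(\tilde{h}\circ X) \leq \E_\Q[f(g)\mid\sigma(X)]$ $\Q$--a.s., and integrating against $\Q$ (using the change-of-variable formula for pushforwards) gives
\[
\int f(\tilde{h}) \,\dd\Q^X \,=\, \int f(\tilde{h}\circ X) \,\dd\Q \,\leq\, \int \E_\Q\bigl[f(g)\mid\sigma(X)\bigr] \,\dd\Q \,=\, \int f(g) \,\dd\Q\,.
\]

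For the general case, since $\Psing^X$ need not be singular with respect to $\Q^X$, I would further Lebesgue-decompose $\Psing^X = (\Psing^X)_{\ac} + (\Psing^X)_{\sing}$ with respect to $\Q^X$ and set $k = \dd(\Psing^X)_{\ac}/\dd\Q^X$. Uniqueness of the Lebesgue decomposition of $\P^X$ then forces $\dd\P^X/\dd\Q^X = \tilde{h} + k$ and $(\P^X)_{\sing} = (\Psing^X)_{\sing}$. Applying~\eqref{eq:Mf} pointwise in the form $f(\tilde{h}+k) \leq f(\tilde{h}) + k\,M_f$, integrating, combining with the previous display, and using the singular-mass identity $(\Psing^X)_{\ac}(\Omega') + (\P^X)_{\sing}(\Omega') = \Psing^X(\Omega') = \Psing(\Omega)$ finally delivers
\[
\Div(\P^X,\Q^X) \,\leq\, \int f(g)\,\dd\Q \,+\, M_f\,\Psing(\Omega) \,=\, \Div(\P,\Q)\,.
\]
The main obstacle is precisely this bookkeeping: the pushforward of a singular measure need not remain singular with respect to $\Q^X$, so the absolutely continuous leakage $(\Psing^X)_{\ac}$ must be carried across the Jensen step and then recombined with $(\Psing^X)_{\sing}$ via~\eqref{eq:Mf}, so that the total source singular mass $\Psing(\Omega)$ is what ultimately multiplies $M_f$ on the right-hand side. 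Once this accounting is set up, the conceptual core is just the identification $\tilde{h}\circ X = \E_\Q[g\mid\sigma(X)]$ together with conditional Jensen.
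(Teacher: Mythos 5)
Your proof is correct and follows essentially the same route as the paper's: Lebesgue-decompose $\P$ with respect to $\Q$, identify $\dd(\Pac)^X/\dd\Q^X$ with the conditional expectation $\E_\Q\bigl[\dd\Pac/\dd\Q \mid X\bigr]$, apply conditional Jensen, and absorb the singular mass that leaks into $(\P^X)_{\ac}$ via the maximal-slope bound~\eqref{eq:Mf}. The only cosmetic difference is the order of operations — you apply Jensen before the $M_f$ bound while the paper applies~\eqref{eq:Mf} first and Jensen second — but the decomposition bookkeeping and the two key lemmas are identical.
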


\begin{corollary}[Data-processing inequality with expectations of random variables]
\label{lem:dataProcessingIneq2--fdiv}
Let $\P$ and $\Q$ be two probability distributions on the same measurable space $(\Omega,\cF)$, and let $X$ be any random variable on $(\Omega,\cF)$ taking values
in $[0,1]$. Denote by $\E_{\P}[X]$ and $\E_{\Q}[X]$ the expectations of $X$ under $\P$ and $\Q$ respectively. Then,
\[
\div\bigl(\E_{\P}[X],\E_{\Q}[X]\bigr) \leq \Div(\P,\Q)\,,
\]
where $\div(p,q) = \Div\bigl(\Ber(p),\Ber(q)\bigr)$
denotes the $f$--divergence between Bernoulli distributions
with respective parameters $p$ and $q$.
\end{corollary}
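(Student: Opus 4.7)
The plan is to reduce to the Bernoulli case by a standard randomization and then invoke the general data-processing inequality of Lemma~\ref{lem:dataProcessingIneq--fdiv}. I would enlarge the probability space by tensoring with $\bigl([0,1],\cB([0,1])\bigr)$ equipped with the Lebesgue measure $\lambda$. Set $\tilde\P = \P \otimes \lambda$ and $\tilde\Q = \Q \otimes \lambda$ on the product space $\bigl(\Omega \times [0,1],\,\cF \otimes \cB([0,1])\bigr)$, and let $U$ denote the canonical projection onto the second coordinate. Under both $\tilde\P$ and $\tilde\Q$, the variable $U$ is uniform on $[0,1]$ and is independent of any random variable defined on $(\Omega,\cF)$ alone, in particular of $X$.

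I then introduce the Bernoulli indicator $Y = \indicator{U \leq X}$. Since $X$ takes values in $[0,1]$, Fubini's theorem yields $\tilde\P^Y = \Ber\bigl(\E_\P[X]\bigr)$ and $\tilde\Q^Y = \Ber\bigl(\E_\Q[X]\bigr)$. Applying Lemma~\ref{lem:dataProcessingIneq--fdiv} to the random variable $Y$ on the enlarged space then gives
\[
\div\bigl(\E_\P[X],\,\E_\Q[X]\bigr) = \Div\bigl(\tilde\P^Y,\,\tilde\Q^Y\bigr) \leq \Div\bigl(\tilde\P,\,\tilde\Q\bigr).
\]

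It remains to check that tensoring both $\P$ and $\Q$ with the same probability measure leaves the $f$--divergence unchanged, i.e., $\Div(\tilde\P, \tilde\Q) = \Div(\P,\Q)$. Starting from the Lebesgue decomposition $\P = \Pac + \Psing$ with respect to $\Q$, I would verify that $\tilde\P = (\Pac \otimes \lambda) + (\Psing \otimes \lambda)$ is the Lebesgue decomposition of $\tilde\P$ with respect to $\tilde\Q$, with density $\d\tilde\P_{\ac}/\d\tilde\Q = (\d\P/\d\Q) \circ \pi_\Omega$, where $\pi_\Omega$ is the projection onto $\Omega$. Fubini's theorem then reduces the integral of $f(\d\tilde\P_{\ac}/\d\tilde\Q)$ against $\tilde\Q$ to that of $f(\d\P/\d\Q)$ against $\Q$, while $(\Psing \otimes \lambda)(\Omega \times [0,1]) = \Psing(\Omega)$, so the two expressions of Definition~\ref{def:fdiv} coincide. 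The main obstacle I anticipate is this last measure-theoretic bookkeeping: one must argue carefully that $\Psing \otimes \lambda$ is singular with respect to $\tilde\Q$ (it suffices to tensor a witness set in $\Omega$ with $[0,1]$) and handle the conventions on $M_f$ and $0 \cdot (+\infty)$ consistently when $M_f = +\infty$.
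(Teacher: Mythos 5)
Your proof is correct and is essentially the same as the paper's: the paper also tensors $\P$ and $\Q$ with Lebesgue measure on $[0,1]$, applies the data-processing inequality (Lemma~\ref{lem:dataProcessingIneq--fdiv}) to the indicator of the event $E=\{(\omega,x): x\le X(\omega)\}$ (your $Y=\indicator{U\le X}$), and verifies $\Div(\P\otimes\lm,\Q\otimes\lm)=\Div(\P,\Q)$ via the induced Lebesgue decomposition. The measure-theoretic bookkeeping you flag is indeed the point the paper spells out explicitly, and your handling of it is the intended one.
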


\begin{corollary}[Joint convexity of $\Div$]
\label{cor:jointconvKL--fdiv}
All $f$--divergences $\Div$ are jointly convex, i.e.,
for all probability distributions $\P_1,\P_2$ and $\Q_1,\Q_2$ on the same measurable space $(\Omega,\cF)$,
and all $\lambda \in (0,1)$,
\[
\Div \Bigl( (1-\lambda) \P_1 + \lambda \P_2, \, (1-\lambda) \Q_1 + \lambda \Q_2 \Bigr) \leq
(1-\lambda) \, \Div(\P_1,\Q_1) + \lambda \, \Div(\P_2,\Q_2)\,.
\]
\end{corollary}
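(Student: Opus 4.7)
The plan is to derive joint convexity from the data-processing inequality (Lemma~\ref{lem:dataProcessingIneq--fdiv}) by lifting the mixtures to an enlarged product space where each component keeps its identity, then projecting back. Concretely, I would work on $\Omega \times \{1,2\}$ equipped with the product $\sigma$-algebra and define
\[
\P^\ast = (1-\lambda)\,\P_1 \otimes \delta_1 + \lambda \, \P_2 \otimes \delta_2\,,
\qquad
\Q^\ast = (1-\lambda)\,\Q_1 \otimes \delta_1 + \lambda \, \Q_2 \otimes \delta_2\,.
\]
The canonical projection $\pi : \Omega \times \{1,2\} \to \Omega$ then pushes $\P^\ast$ and $\Q^\ast$ forward to $(1-\lambda)\P_1 + \lambda \P_2$ and $(1-\lambda)\Q_1 + \lambda \Q_2$ respectively, which is exactly the left-hand side of the inequality we want.

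Next I would compute $\Div(\P^\ast, \Q^\ast)$ and show it equals the right-hand side. For this I write the Lebesgue decomposition $\P_i = \P_{i,\ac} + \P_{i,\sing}$ with respect to $\Q_i$ for $i = 1, 2$. Since the two slices $\Omega \times \{1\}$ and $\Omega \times \{2\}$ are disjoint and $\Q^\ast$ is supported separately on each, the Lebesgue decomposition of $\P^\ast$ with respect to $\Q^\ast$ splits accordingly, with
\[
\frac{\d \P^\ast}{\d \Q^\ast}(\omega, i) = \frac{\d \P_i}{\d \Q_i}(\omega)
\qquad \text{and} \qquad
\P^\ast_{\sing}(\Omega \times \{1,2\}) = (1-\lambda)\,\P_{1,\sing}(\Omega) + \lambda \,\P_{2,\sing}(\Omega)\,.
\]
Integrating slice by slice and using that $\Q^\ast$ assigns mass $(1-\lambda)$ to the first slice with restricted distribution $\Q_1$ and mass $\lambda$ to the second with distribution $\Q_2$, the $f$-divergence on the product space equals
\[
(1-\lambda) \int_\Omega f\!\left(\frac{\d \P_1}{\d \Q_1}\right)\! \d \Q_1 + \lambda \int_\Omega f\!\left(\frac{\d \P_2}{\d \Q_2}\right)\! \d \Q_2 + \bigl[(1-\lambda)\,\P_{1,\sing}(\Omega) + \lambda\,\P_{2,\sing}(\Omega)\bigr] M_f\,,
\]
which is precisely $(1-\lambda)\,\Div(\P_1,\Q_1) + \lambda\,\Div(\P_2,\Q_2)$.

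Finally, applying Lemma~\ref{lem:dataProcessingIneq--fdiv} to the projection $X = \pi$ yields
\[
\Div\bigl((\P^\ast)^\pi,(\Q^\ast)^\pi\bigr) \leq \Div(\P^\ast,\Q^\ast)\,,
\]
which, combined with the two identifications above, is exactly the claimed joint convexity. The only genuinely delicate point is making sure that the product-space decomposition above is the correct Lebesgue decomposition (absolute continuity on each slice plus singular parts carried componentwise), and that the contribution of $M_f$ is accounted for properly; once the two slices are treated separately this is straightforward since $\Q^\ast$ is a disjoint sum of two measures. Everything else is bookkeeping and an invocation of the already-stated data-processing inequality.
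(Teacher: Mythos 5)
Your proposal is essentially identical to the paper's proof: both lift $\P_1, \P_2$ and $\Q_1, \Q_2$ to a two-slice product space $\Omega \times \{1,2\}$ (the paper writes it as $\{1,2\} \times \Omega$), verify by a slice-by-slice Lebesgue decomposition that the $f$--divergence on the augmented space equals $(1-\lambda)\,\Div(\P_1,\Q_1) + \lambda\,\Div(\P_2,\Q_2)$, and then apply the data-processing inequality (Lemma~\ref{lem:dataProcessingIneq--fdiv}) to the projection onto $\Omega$. The argument is correct and the bookkeeping you flag (checking that the slice-wise decomposition is indeed the Lebesgue decomposition, and that the singular mass feeds $M_f$ correctly) is exactly what the paper's appendix verifies.
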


Lemma~\ref{lem:dataProcessingIneq--fdiv} and~Corollary~\ref{cor:jointconvKL--fdiv}
are folklore knowledge. However, for the sake of self-completeness,
we provide complete and elementary proofs thereof
in the extended version of this article (see Appendix~\ref{sec:Ext}).
The proof of Lemma~\ref{lem:dataProcessingIneq--fdiv} is extracted from \citet[Section 4.2]{AlSi-66-fdivergences}, see
also \citet[Proposition~1.2]{ThInfo-Pardo},
while we derive Corollary~\ref{cor:jointconvKL--fdiv} as an elementary consequence of
Lemma~\ref{lem:dataProcessingIneq--fdiv} applied to an augmented probability space.
These proof techniques do not seem to be well known;
indeed, in the literature
many proofs of the elementary properties above for the Kullback-Leibler divergence
focus on the discrete case (\citealp{CoTh06}) or use the duality formula for the Kullback-Leibler divergence
(\citealp{Massart03StFlour} or \citealp{BoLuMa12}, in particular Exercise 4.10 therein).

On the contrary, Corollary~\ref{lem:dataProcessingIneq2--fdiv} is a recent though elementary result, proved
in~\citet{GaMeSt16} for Kullback-Leibler divergences.
The proof readily extends to $f$--divergences. \\

\begin{proofref}{Corollary~\ref{lem:dataProcessingIneq2--fdiv}}
We augment the underlying measurable space into $\Omega \times [0,1]$, where
$[0,1]$ is equipped with the Borel $\sigma$--algebra $\cB\bigl([0,1]\bigr)$ and the Lebesgue measure $\lm$.
We denote by $\P \otimes \lm$ and $\Q \otimes \lm$ the product distributions
of $\P$ and $\lm$, $\Q$ and $\lm$.
We write the Lebesgue decomposition $\P = \Pac + \Psing$ of $\P$ with respect to $\Q$,
and deduce from it the Lebesgue decomposition of
$\P \otimes \lm$ with respect to $\Q \otimes \lm$:
the absolutely continuous part is given by $\Pac \otimes \lm$, with
density
\[
(\omega,x) \in \Omega \times [0,1] \longmapsto
\frac{\d(\Pac \otimes \lm)}{\d(\Q \otimes \lm)}(\omega,x)
= \frac{\d\Pac}{\d\Q}(\omega)\,,
\]
while the singular part is given by $\Psing \otimes \lm$, a subprobability
with total mass $\Psing(\Omega)$.
In particular,
\[
\Div\bigl(\P \otimes \lm,\,\Q \otimes \lm \bigr)
= \Div(\P,\Q)\,.
\]
Now, for all events $E \in \cF \otimes \cB\bigl([0,1]\bigr)$,
the data-processing inequality (Lemma~\ref{lem:dataProcessingIneq--fdiv})
used with the indicator function $X = \indicator{E}$
ensures that
\[
\Div\bigl(\P \otimes \lm,\,\Q \otimes \lm \bigr)
\geq \Div\Bigl( (\P \otimes \lm)^{\ind{E}},\,\,(\Q \otimes \lm)^{\ind{E}} \Bigr)
= \div \bigl( (\P \otimes \lm)(E), \, (\Q \otimes \lm)(E) \bigr)\,,
\]
where the final equality is by mere definition
of $\div$ as the $f$--divergence between Bernoulli distributions.
The proof is concluded by noting that for the choice of $E = \bigl\{(\omega,x) \in \Omega \times [0,1]: \, x \leq X(\omega)\bigr\}$, Tonelli's theorem ensures that
\[
(\P \otimes \lm)(E) = \bigintsss_{\Omega} \left( \int_{[0,1]} \indicator{\bigl\{x \leq X(\omega)\bigr\}} \d \mathfrak{m}(x) \right) \dd\P(\omega) = \E_{\P}[X] \,,
\]
and, similarly, $(\Q \otimes \lm)(E) = \E_{\Q}[X]$.
\end{proofref}

\newpage
\section{How to derive a Fano-type inequality: an example}
\label{sec:FanoExample}

In this section we explain on an example the methodology to derive Fano-type inequalities. We will
present the generalization of the approach and the resulting bounds in Section~\ref{sec:prooftech}, but the proof below already contains the two key arguments: a reduction to Bernoulli distributions, and a lower bound on the $f$--divergence between Bernoulli distributions. For the sake of concreteness,
we focus on the Kullback-Leibler divergence in this section.
We recall that we will discuss how novel (or not novel) our results and approaches are in Section~\ref{sec:ref-beg}.

\begin{proposition}
\label{prop:FanoIT}
Given an underlying measurable space,
for all probability pairs $\P_i,\,\Q_i$ and all events $A_i$ (not necessarily disjoint), where $i \in \{1,\ldots,N\}$, with $0 < \frac{1}{N} \sum_{i=1}^N \Q_i(A_i) < 1$, we have
\[
\frac{1}{N} \sum_{i=1}^N \P_i(A_i) \leq \frac{\displaystyle{\frac{1}{N} \sum_{i=1}^N \KL(\P_i,\Q_i)} + \ln(2)}{\displaystyle{
- \ln \Biggl( \frac{1}{N} \sum_{i=1}^N \Q_i(A_i) \Biggr)}}\,.
\]
In particular, if $N \geq 2$ and the $A_i$ form a partition,
\[
\frac{1}{N} \sum_{i=1}^N \P_i(A_i) \leq \frac{\displaystyle{\frac{1}{N} \inf_{\Q} \sum_{i=1}^N \KL(\P_i,\Q)} + \ln(2)}{\ln(N)}\,.
\]
\end{proposition}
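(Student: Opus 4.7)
The plan is to combine three ingredients: the data-processing inequality of Corollary~\ref{lem:dataProcessingIneq2--fdiv} (specialized to the Kullback--Leibler case), the joint convexity of $\kl$ coming from Corollary~\ref{cor:jointconvKL--fdiv}, and an elementary lower bound $\kl(p,q) \geq -p\ln q - \ln 2$ that transforms a $\kl$ bound into an upper bound on $p$ alone.

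First, for each $i \in \{1,\ldots,N\}$ I apply Corollary~\ref{lem:dataProcessingIneq2--fdiv} with $X = \indicator{A_i}$ to the pair $(\P_i,\Q_i)$, obtaining
\[
\kl\bigl(\P_i(A_i),\Q_i(A_i)\bigr) \leq \KL(\P_i,\Q_i)\,.
\]
Averaging over $i$ and using joint convexity of $\kl$ (Corollary~\ref{cor:jointconvKL--fdiv} specialized to pairs of Bernoulli distributions) gives, with $p \defeq \frac{1}{N}\sum_i \P_i(A_i)$ and $q \defeq \frac{1}{N}\sum_i \Q_i(A_i)$,
\[
\kl(p,q) \leq \frac{1}{N}\sum_{i=1}^N \kl\bigl(\P_i(A_i),\Q_i(A_i)\bigr) \leq \frac{1}{N}\sum_{i=1}^N \KL(\P_i,\Q_i)\,.
\]

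Second, I lower-bound $\kl(p,q)$ to isolate $p$. Expanding,
\[
\kl(p,q) = -p\ln q - (1-p)\ln(1-q) + p\ln p + (1-p)\ln(1-p)\,,
\]
and since $q \in (0,1)$ by assumption, $-(1-p)\ln(1-q) \geq 0$; moreover the binary entropy $-p\ln p - (1-p)\ln(1-p)$ is at most $\ln 2$, so
\[
\kl(p,q) \geq -p\ln q - \ln 2\,.
\]
Rearranging and dividing by $-\ln q > 0$ (valid because $q < 1$) yields the first claimed inequality.

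For the second statement, I specialize to $\Q_i = \Q$ for some common $\Q$. When the $A_i$ form a partition of $\Omega$, $\sum_{i=1}^N \Q(A_i) = 1$, hence $q = 1/N$ and $-\ln q = \ln N$ (this requires $N \geq 2$ so that $q < 1$, matching the hypothesis). The first inequality then reads
\[
\frac{1}{N}\sum_{i=1}^N \P_i(A_i) \leq \frac{\frac{1}{N}\sum_{i=1}^N \KL(\P_i,\Q) + \ln 2}{\ln N}\,,
\]
and taking the infimum over $\Q$ on the right-hand side concludes.

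The only non-routine step is the lower bound $\kl(p,q) \geq -p\ln q - \ln 2$; it is elementary once one splits $\kl$ into a cross-entropy term in $\ln q$ and a binary entropy term, but it is where the characteristic $\ln(N)$ denominator and the additive $\ln(2)$ of Fano-type inequalities originate. The rest is a mechanical chaining of the data-processing inequality with joint convexity.
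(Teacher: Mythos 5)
Your proof is correct and follows essentially the same route as the paper's: the same Bernoulli reduction (joint convexity of $\kl$ plus the data-processing inequality applied to the indicators $\indicator{A_i}$), and the same elementary lower bound $\kl(p,q) \geq p\ln(1/q) - \ln 2$ obtained by bounding the binary entropy by $\ln 2$ and dropping the nonnegative term $-(1-p)\ln(1-q)$. The only cosmetic differences are that you invoke the stronger Corollary~\ref{lem:dataProcessingIneq2--fdiv} where the paper uses the simpler Lemma~\ref{lem:dataProcessingIneq--fdiv} (they coincide for indicator functions), and you apply data-processing before convexity rather than after, which is immaterial.
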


\begin{proof}
Our first step is to reduce the problem to Bernoulli distributions. Using first the joint convexity of the Kullback-Leibler divergence (Corollary~\ref{cor:jointconvKL--fdiv}), and second the data-processing inequality with the indicator functions $X = \indicator{A_i}$ (Lemma~\ref{lem:dataProcessingIneq--fdiv}), we get
\begin{equation}
\label{eq:red1}
\kl\Biggl(\frac{1}{N} \sum_{i=1}^N \P_i(A_i),\,\frac{1}{N} \sum_{i=1}^N \Q_i(A_i)\Biggr) \leq
\frac{1}{N} \sum_{i=1}^N \kl\bigl( \P_i(A_i), \Q_i(A_i) \bigr) \leq
\frac{1}{N} \sum_{i=1}^N \KL(\P_i,\Q_i)\,.
\end{equation}
Therefore, we have $\kl\bigl(\ol{p},\ol{q}\bigr) \leq \ol{K}$ with
\begin{equation}
\ol{p} = \frac{1}{N} \sum_{i=1}^N \P_i(A_i) \qquad \ol{q} = \frac{1}{N} \sum_{i=1}^N \Q_i(A_i) \qquad \ol{K} = \frac{1}{N} \sum_{i=1}^N \KL(\P_i,\Q_i) \,.
\label{eq:defAverageQuantities}
\end{equation}
\noindent
Our second and last step is to lower bound  $\kl\bigl(\ol{p},\ol{q}\bigr)$ to extract an upper bound on $\ol{p}$. Noting that $\ol{p} \ln\bigl(\ol{p}\bigr) + \bigl(1-\ol{p}\bigr) \ln\bigl(1-\ol{p}\bigr) \geq - \ln(2)$, we have, by definition of $\kl\bigl(\ol{p},\ol{q}\bigr)$,
\begin{equation}
\label{eq:bd1-intro}
\kl\bigl(\ol{p},\ol{q}\bigr) \geq \ol{p} \ln\bigl(1/\ol{q}\bigr) - \ln(2)\,,
\qquad \mbox{thus} \qquad
\ol{p} \leq \frac{\kl\bigl(\ol{p},\ol{q}\bigr) + \ln(2)}{\ln\bigl(1/\ol{q}\bigr)}
\end{equation}
where $\ol{q} \in (0,1)$ by assumption.
Substituting the upper bound $\kl\bigl(\ol{p},\ol{q}\bigr) \leq \ol{K}$ in~\eqref{eq:bd1-intro} concludes the proof.
\end{proof}

\newpage
\section{Various Fano-type inequalities, with the same two ingredients}
\label{sec:prooftech}

We extend the approach of Section~\ref{sec:FanoExample} and derive a broad family of Fano-type inequalities, which will be of the form
\[
\ol{p} \leq \psi\bigl(\ol{q},\ol{K}\bigr)\,,
\]
where the average quantities $\ol{p}$, $\ol{q}$ and $\ol{K}$
are described in Section~\ref{sec:avg} (first ingredient)
and where the functions $\psi$ are described in Section~\ref{sec:LBdiv}
(second ingredient). The simplest example that we considered in Section~\ref{sec:FanoExample} corresponds to $\psi(q,K) = \bigl(K + \ln(2)\bigr)/\ln(1/q)$ and
\[
\ol{p} = \frac{1}{N} \sum_{i=1}^N \P_i(A_i) \qquad \ol{q} = \frac{1}{N} \sum_{i=1}^N \Q_i(A_i) \qquad \ol{K} = \frac{1}{N} \sum_{i=1}^N \KL(\P_i,\Q_i) \,.
\]
We address here the more general cases where the finite averages are replaced with integrals over any measurable space $\Theta$
and where the indicator functions $\indicator{A_i}$ are replaced with arbitrary $[0,1]$--valued random variables $Z_{\theta}$, where $\theta \in \Theta$.

We recall that the novelty (or lack of novelty) of our results will be discussed in detail
in Section~\ref{sec:ref-beg}; of particular interest therein is the discussion of the (lack of) novelty
of our first ingredient, namely the reduction to Bernoulli distributions.

\subsection{Reduction to Bernoulli distributions}
\label{sec:avg}

As in Section~\ref{sec:FanoExample}, we can resort to the
data-processing inequality (Lemma~\ref{lem:dataProcessingIneq--fdiv})
to lower bound any $f$--divergence by that of suitably chosen Bernoulli distributions.
We present three such reductions, in increasing degree of generality. We only indicate
how to prove the first one, since they are all similar.

\paragraph{Countably many distributions.}
We consider some underlying measurable space,
countably many pairs of probability distributions $\P_i,\,\Q_i$ on this space,
not necessarily disjoint events $A_i$, all indexed by $i \in \{1,2,\ldots\}$,
as well as a convex combination $\alpha = (\alpha_1,\alpha_2,\ldots)$.
The latter can be thought of as a prior distribution.
The inequality reads
\begin{equation}
\label{eq:red2}
\div\Biggl(\sum_{i \geq 1} \alpha_i \, \P_i(A_i), \, \sum_{i \geq 1} \alpha_i \, \Q_i(A_i)\Biggr) \leq
\sum_{i \geq 1} \alpha_i \div\bigl( \P_i(A_i), \Q_i(A_i) \bigr) \leq
\sum_{i \geq 1} \alpha_i \Div(\P_i,\Q_i)\,.
\end{equation}
The second inequality of~\eqref{eq:red2} follows from the data-processing inequality
(Lemma~\ref{lem:dataProcessingIneq--fdiv}) by considering the indicator functions $X = \indicator{A_i}$.
For the first inequality, we resort to a general version of Jensen's inequality stated in Lemma~\ref{lem:Jensen2} (Appendix~\ref{sec:Jensen}),
by considering the convex function $\phi = \div$ (Corollary~\ref{cor:jointconvKL--fdiv}) on the convex set $C=[0,1]^2$,
together with the probability measure
\[
\mu = \sum_i \alpha_i \, \delta_{\!\bigl(\P_i(A_i), \Q_i(A_i)\bigr)}\,,
\]
where $\delta_{(x,y)}$ denotes the Dirac mass at $(x,y) \in \R^2$.

\paragraph{Distributions indexed by a possibly continuous set.}
Up to measurability issues (that are absent in the countable case),
the reduction above immediately extends to the case of statistical models $\P_\theta,\,\Q_\theta$
and not necessarily disjoint events $A_\theta$
indexed by a measurable parameter space $(\Theta,\mathcal{G})$, equipped with
a prior probability distribution $\nu$ on~$\Theta$.
We assume that
\[
\theta \in \Theta \longmapsto \bigl( \P_\theta(A_\theta), \, \Q_\theta(A_\theta) \bigr) \qquad \textrm{and} \qquad \theta \in \Theta \longmapsto \Div(\P_\theta,\Q_\theta)
\]
are $\mathcal{G}$--measurable and get the reduction
\begin{equation}
\label{eq:red3}
\div\Biggl( \int_\Theta \P_\theta(A_\theta) \d\nu(\theta), \, \int_\Theta \Q_\theta(A_\theta) \d\nu(\theta) \Biggr)
\leq \int_\Theta \div\bigl( \P_\theta(A_\theta), \, \Q_\theta(A_\theta) \bigr) \d\nu(\theta)
\leq \int_\Theta \Div(\P_\theta,\Q_\theta)\d\nu(\theta)\,.
\end{equation}

\paragraph{Random variables.}
In the reduction above, it was unnecessary that the sets $A_\theta$ form a partition or even be disjoint.
It is therefore not surprising that it can be generalized by replacing the indicator functions $\indicator{A_{\theta}}$
with arbitrary $[0,1]$--valued random variables $Z_{\theta}$.
We denote the expectations of the latter with respect to $\P_\theta$ and $\Q_\theta$
by $\E_{\P_\theta}$ and $\E_{\Q_\theta}$ and
assume that
\[
\theta \in \Theta \longmapsto \Bigl( \E_{\P_\theta} \bigl[ Z_\theta \bigr],
\,\, \E_{\Q_\theta} \bigl[ Z_\theta \bigr] \Bigr) \qquad \textrm{and} \qquad \theta \in \Theta \longmapsto \Div(\P_\theta,\Q_\theta)
\]
are $\mathcal{G}$--measurable.
The reduction reads in this case
\begin{align}
\nonumber
\div\Biggl(\int_\Theta \E_{\P_\theta} \bigl[ Z_\theta \bigr] \d\nu(\theta), \,
\int_\Theta \E_{\Q_\theta} \bigl[ Z_\theta \bigr] \d\nu(\theta) \Biggr)
& \leq \bigintsss_\Theta \div\Bigl( \E_{\P_\theta} \bigl[ Z_\theta \bigr], \, \E_{\Q_\theta} \bigl[ Z_\theta \bigr] \Bigr) \d\nu(\theta) \\
\label{eq:red5}
& \leq \int_\Theta \Div(\P_\theta,\Q_\theta)\d\nu(\theta)\,,
\end{align}
where the first inequality relies on convexity of $\div$ and on Jensen's inequality,
and the second inequality follows from the data-processing inequality with expectations of random variables
(Lemma~\ref{lem:dataProcessingIneq2--fdiv}).

\subsection{Any lower bound on $\div$ leads to a Fano-type inequality}
\label{sec:LBdiv}

The section above indicates that after the reduction to the Bernoulli case,
we get inequations of the form ($\ol{p}$ is usually the unknown)
\[
\div\bigl(\ol{p},\ol{q}\bigr) \leq \ol{D}\,,
\]
where $\ol{D}$ is an average of $f$--divergences, and
$\ol{p}$ and $\ol{q}$ are averages of probabilities of events or expectations
of $[0,1]$--valued random variables.
We thus proceed by lower bounding the $\div$ function.
The lower bounds are idiosyncratic to each $f$--divergence
and we start with the most important one, namely, the Kullback-Leibler divergence.

\paragraph{Lower bounds on $\kl$.}
The most classical bound
was already used in Section~\ref{sec:FanoExample}:
for all $p \in [0,1]$ and $q \in (0,1)$,
\begin{equation}
\label{eq:bd1}
\kl(p,q) \geq p \ln(1/q) - \ln(2)\,,
\qquad \mbox{thus} \qquad
p \leq \frac{\kl(p,q) + \ln(2)}{\ln(1/q)}\,.
\end{equation}
It is well-known that this bound can be improved by replacing the term $\ln(2)$ with $\ln(2-q)$:
for all $p \in [0,1]$ and $q \in (0,1)$,
\begin{equation}
\label{eq:bd2}
\kl(p,q) \geq p \ln(1/q) - \ln(2-q)\,,
\qquad \mbox{thus} \qquad
p \leq \frac{\kl(p,q) + \ln(2-q)}{\ln(1/q)}\,.
\end{equation}
This leads to a non-trivial bound even if $q=1/2$ (as is the case in some applications).
A (novel) consequence of this bound is that
\begin{equation}
\label{eq:bd2bis}
p \leq 0.21 + 0.79\,q + \frac{\kl(p,q)}{\ln(1/q)}\,.
\end{equation}
The improvement~\eqref{eq:bd2} is a consequence of, e.g., a convexity inequality, and its
proof and the one for ~\eqref{eq:bd2bis} can be found in Section~\ref{sec:proofLBKL-conv}.

The next and final bound makes a connection between Pinsker's and Fano's inequalities:
on the one hand, it is a refined Pinsker's inequality and on the other hand,
it leads to a bound on $p$ of the same flavor as \eqref{eq:bd1}--\eqref{eq:bd2bis}.
Namely, for all $p \in [0,1]$ and $q \in (0,1)$,
\begin{equation}
\label{eq:bd3}
\kl(p,q) \geq \max\!\left\{\ln\!\left(\frac{1}{q}\right)\!,\,2\right\} (p-q)^2\,,
\qquad \mbox{thus} \qquad
p \leq q + \sqrt{\frac{\kl(p,q)}{\max\bigl\{\ln(1/q),2\bigr\}}}\,.
\end{equation}
The first inequality was stated and proved by \citet{OrWe-05-PinskerDistributionDependent},
the second is a novel but straightforward consequence of it.
We provide their proofs and additional references in Section~\ref{sec:proofLBKL-Pinsker}.

\paragraph{Lower bound on $\div$ for the $\chi^2$ divergence.}
This case corresponds to $f(x) = x^2-1$.
The associated divergence equals $+\infty$ when $\P \not\ll \Q$, and when $\P \ll \Q$,
\[
\chi^2(\P,\Q) = \bigintsss_{\Omega} \left(\frac{\d \P}{\d \Q}\right)^{\!\! 2} \! \d \Q - 1\,.
\]
A direct calculation and the usual measure-theoretic conventions entail
the following simple lower bound: for all $(p,q) \in [0,1]^2$,
\begin{equation}
\label{eq:chi2LB}
\chi^2\bigl(\Ber(p),\Ber(q)\bigr) = \frac{(p-q)^2}{q(1-q)} \geq \frac{(p-q)^2}{q}\,,
\qquad \mbox{thus} \qquad
p \leq q + \sqrt{q \, \chi^2\bigl(\Ber(p),\Ber(q)\bigr)}\,.
\end{equation}

\paragraph{Lower bound on $\div$ for the Hellinger distance.}
This case corresponds to $f(x) = \bigl( \sqrt{x} - 1 \bigr)^2$, for which $M_f = 1$.
The associated divergence equals, when $\P \ll \Q$,
\[
H^2(\P,\Q) = \bigintss_{\Omega} \left( \sqrt{\frac{\d \P}{\d \Q}} - 1 \right)^{\!\! 2} \! \d \Q
= 2 \! \left(  1 -  \bigintss_{\Omega} \sqrt{\frac{\d \P}{\d \Q}} \,\d \Q \right)
\]
and always lies in $[0,2]$.
A direct calculation indicates that
for all $p \in [0,1]$ and $q \in (0,1)$,
\[
h^2(p,q) \defeq H^2\bigl(\Ber(p),\Ber(q)\bigr)
= 2 \biggl( 1 - \Bigl( \sqrt{pq} + \sqrt{(1-p)(1-q)} \Bigr) \biggr)\,,
\]
and further direct calculations in the cases $q =0$ and $q = 1$ show
that this formula remains valid in these cases.
To get a lower bound on $h^2(p,q)$, we proceed as follows.
The Cauchy-Schwarz inequality indicates that
\[
\sqrt{pq} + \sqrt{(1-q)(1-p)}
\leq \sqrt{\bigl( p + (1-q) \bigr) \bigl( q + (1-p)\bigr)}
= \sqrt{1 - (p-q)^2}\,,
\]
or put differently, that $h^2(p,q) \geq 2 \Bigl( 1 - \sqrt{1 - (p-q)^2} \Bigr)$, thus
\begin{equation}
\label{eq:LB-H}
p \leq q + \sqrt{1-\bigl(1-h^2(p,q)/2\bigr)^2}
= q + \sqrt{h^2(p,q)\bigl(1-h^2(p,q)/4\bigr)}\,,
\end{equation}
which is one of Le Cam's inequalities.
A slightly sharper but less readable bound was
exhibited by \citet[Example~II.6]{guntuboyina2011lower}
and is provided, for the sake of completeness, in an extended version of this article, see Appendix~\ref{sec:Ext}.

\subsection{Examples of combinations}
\label{sec:excombi}

The combination of~\eqref{eq:red3} and~\eqref{eq:bd1} yields a continuous
version of Fano's inequality. (We discard again all measurability issues.)
\begin{lemma}
\label{lm:FanoContinuous}
We consider a mesurable space $(\Theta,\cE)$ equipped with a probability distribution $\nu$.
Given an underlying measurable space $(\Omega, \cF)$, for all two collections $\P_\theta,\,\Q_\theta$,
of probability distributions on this space and all collections of events $A_\theta$ of $(\Omega, \cF)$,
where $\theta \in \Theta$, with
\[
0 < \bigintsss_{\Theta} \Q_{\theta}(A_{\theta}) \d\nu(\theta) < 1\,,
\]
we have
\[
\bigintsss_{\Theta} \P_{\theta}(A_{\theta}) \d\nu(\theta) \leq
\frac{\displaystyle{\bigintsss_{\Theta} \KL \bigl(\P_{\theta}, \Q_{\theta} \bigr)} \d\nu(\theta) + \ln(2)}{- \ln \! \left(
\displaystyle{{\displaystyle{\bigintsss_{\Theta} \Q_{\theta}(A_{\theta}) \d\nu(\theta)}}} \right)}\;.
\]
\end{lemma}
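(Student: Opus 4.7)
The plan is to follow the exact two-step template laid out in Sections~\ref{sec:avg} and~\ref{sec:LBdiv}, mirroring the proof of Proposition~\ref{prop:FanoIT} but with the finite averages $\frac{1}{N}\sum_{i=1}^N$ replaced by integrals $\int_\Theta \cdot \d\nu(\theta)$. Concretely, the first step is the Bernoulli reduction: I would instantiate the chain of inequalities~\eqref{eq:red3} with the convex function $f(x)=x\ln x$ (so $\Div=\KL$ and $\div=\kl$), choosing as random variables the indicator functions $Z_\theta = \indicator{A_\theta}$ so that $\E_{\P_\theta}[Z_\theta]=\P_\theta(A_\theta)$ and similarly for $\Q_\theta$. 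This produces
\[
\kl\bigl(\ol{p},\ol{q}\bigr) \;\leq\; \ol{K},
\]
where I abbreviate $\ol{p} = \int_\Theta \P_\theta(A_\theta)\d\nu(\theta)$, $\ol{q} = \int_\Theta \Q_\theta(A_\theta)\d\nu(\theta)$, and $\ol{K} = \int_\Theta \KL(\P_\theta,\Q_\theta)\d\nu(\theta)$. The two ingredients here are exactly the joint convexity of $\kl$ (via the general Jensen inequality of Lemma~\ref{lem:Jensen2}) and the data-processing inequality (Lemma~\ref{lem:dataProcessingIneq--fdiv}) applied to the indicator $X=\indicator{A_\theta}$ for each fixed $\theta$.

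The second step is to invoke the scalar lower bound~\eqref{eq:bd1} for $\kl$, namely $\kl(p,q) \geq p\ln(1/q) - \ln(2)$ whenever $q \in (0,1)$, which rearranges to
\[
p \;\leq\; \frac{\kl(p,q) + \ln(2)}{\ln(1/q)}\,.
\]
Applied with $(p,q)=(\ol p,\ol q)$, which is allowed since the assumption $0 < \ol q < 1$ is exactly the hypothesis of the lemma, and combined with the upper bound $\kl(\ol p,\ol q)\leq \ol K$ from step one, this immediately yields the claimed inequality. Writing monotonicity of $x\mapsto x$ and of the logarithm carefully, one substitutes $\ol K$ for $\kl(\ol p,\ol q)$ in the numerator to get exactly the right-hand side of the target display.

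I do not expect any genuine obstacle; the only mildly delicate point is the measurability assumption needed for the integrals $\int_\Theta \P_\theta(A_\theta)\d\nu(\theta)$, $\int_\Theta \Q_\theta(A_\theta)\d\nu(\theta)$ and $\int_\Theta \KL(\P_\theta,\Q_\theta)\d\nu(\theta)$ to be well defined, together with the application of Lemma~\ref{lem:Jensen2} to the probability measure $\mu$ on $[0,1]^2$ obtained as the pushforward of $\nu$ by $\theta\mapsto\bigl(\P_\theta(A_\theta),\Q_\theta(A_\theta)\bigr)$. As the paper explicitly says it discards measurability issues (the sentence just before~\eqref{eq:red3}), it suffices to invoke the reduction~\eqref{eq:red3} verbatim and then~\eqref{eq:bd1}; no further work is required.
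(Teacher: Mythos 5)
Your proof is correct and is precisely the argument the paper intends: it combines the Bernoulli reduction~\eqref{eq:red3} (which follows from the joint convexity of $\kl$ via Lemma~\ref{lem:Jensen2} and the data-processing inequality of Lemma~\ref{lem:dataProcessingIneq--fdiv} applied to $\indicator{A_\theta}$) with the scalar lower bound~\eqref{eq:bd1}, as announced in the sentence preceding Lemma~\ref{lm:FanoContinuous}. Nothing is missing; the handling of the hypothesis $0<\ol q<1$ and the caveat about measurability match the paper's treatment.
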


The combination of~\eqref{eq:red3}, used with a uniform distribution $\nu$
on $N$ points, and~\eqref{eq:bd3} ensures the following
Fano-type inequality for finitely many random variables, whose sum does not need
to be~$1$. It will be used in our second application, in Section~\ref{sec:usecase-sequential}.
\begin{lemma}
\label{lm:Fano-suitesind}
Given an underlying measurable space,
for all probability pairs $\P_i,\,\Q_i$ and for all $[0,1]$--valued random variables
$Z_i$ defined on this measurable space, where $i \in \{1,\ldots,N\}$, with
\[
0 < \frac{1}{N} \sum_{i=1}^N \E_{\Q_i}\bigl[Z_i\bigr] < 1\,,
\]
we have
\[
\frac{1}{N} \sum_{i=1}^N \E_{\P_i}\bigl[Z_i\bigr] \leq \frac{1}{N} \sum_{i=1}^N \E_{\Q_i}\bigl[Z_i\bigr] +
\sqrt{\frac{\displaystyle{\frac{1}{N} \sum_{i=1}^N \KL(\P_i,\Q_i)}}{\displaystyle{
- \ln \Biggl( \frac{1}{N} \sum_{i=1}^N \E_{\Q_i}\bigl[Z_i\bigr] \Biggr)}}}\;.
\]
In particular, if $N \geq 2$ and $Z_1 + \ldots + Z_N = 1$ a.s., then
\[
\frac{1}{N} \sum_{i=1}^N \E_{\P_i}\bigl[Z_i\bigr] \leq \frac{1}{N} +
\sqrt{\frac{\displaystyle{\frac{1}{N} \inf_{\Q} \sum_{i=1}^N \KL(\P_i,\Q)}}{\ln(N)}}\;.
\]
\end{lemma}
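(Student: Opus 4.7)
The plan is to combine the reduction~\eqref{eq:red5} specialized to a uniform prior on $\{1,\ldots,N\}$ with the refined Pinsker-type lower bound on $\kl$ from~\eqref{eq:bd3}. Concretely, set
\[
\ol p = \frac{1}{N}\sum_{i=1}^N \E_{\P_i}\bigl[Z_i\bigr], \qquad
\ol q = \frac{1}{N}\sum_{i=1}^N \E_{\Q_i}\bigl[Z_i\bigr], \qquad
\ol K = \frac{1}{N}\sum_{i=1}^N \KL(\P_i,\Q_i).
\]
First, I would invoke Corollary~\ref{lem:dataProcessingIneq2--fdiv} (data-processing with expectations) for each index $i$, which gives $\kl\bigl(\E_{\P_i}[Z_i],\,\E_{\Q_i}[Z_i]\bigr)\leq \KL(\P_i,\Q_i)$. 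Averaging and using the joint convexity of $\kl$ (Corollary~\ref{cor:jointconvKL--fdiv} applied to Bernoulli distributions) then yields the Bernoulli reduction
\[
\kl\bigl(\ol p,\ol q\bigr) \;\leq\; \frac{1}{N}\sum_{i=1}^N \kl\!\bigl(\E_{\P_i}[Z_i],\,\E_{\Q_i}[Z_i]\bigr) \;\leq\; \ol K.
\]
This is exactly what~\eqref{eq:red5} gives when $\Theta=\{1,\dots,N\}$ and $\nu$ is uniform.

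Next, I would apply the first inequality in~\eqref{eq:bd3}, keeping only the $\ln(1/\ol q)$ branch (which suffices since $\ol q \in (0,1)$ by assumption): $\kl(\ol p,\ol q)\geq \ln(1/\ol q)(\ol p-\ol q)^2$. Combined with the previous display, this gives $(\ol p-\ol q)^2 \leq \ol K/(-\ln \ol q)$, i.e.
\[
\ol p \;\leq\; \ol q + \sqrt{\frac{\ol K}{-\ln(\ol q)}},
\]
which is the first claimed inequality.

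For the second part, I would specialize to a common $\Q_i=\Q$ for all $i$. Since the $Z_i$ are $[0,1]$--valued and sum to $1$ almost surely, Fubini/linearity gives $\sum_{i=1}^N \E_{\Q}[Z_i] = \E_{\Q}\bigl[\sum_i Z_i\bigr] = 1$, so $\ol q = 1/N$ and $-\ln(\ol q) = \ln N$. Note also that $\bar p \geq \bar q$ (equivalently, the bound is nontrivial) is not needed for the inequality to hold. Substituting into the first bound yields
\[
\frac{1}{N}\sum_{i=1}^N \E_{\P_i}[Z_i] \;\leq\; \frac{1}{N} + \sqrt{\frac{\frac{1}{N}\sum_{i=1}^N \KL(\P_i,\Q)}{\ln(N)}},
\]
and taking the infimum over $\Q$ inside the square root finishes the proof (the only $\Q$--dependence is in the numerator).

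There is essentially no obstacle: both ingredients are already prepared by the excerpt. The one small point to be careful about is that the statement is written with $-\ln(\ol q)$ rather than the tighter $\max\{\ln(1/\ol q),2\}$ from~\eqref{eq:bd3}; this is harmless since the two bounds coincide whenever $\ol q\leq e^{-2}$, and the weaker form yields the cleaner expression used in the lemma and in the application of Section~\ref{sec:usecase-sequential}.
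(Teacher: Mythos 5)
Your proof is correct and follows exactly the two-step route the paper indicates: a Bernoulli reduction followed by the lower bound~\eqref{eq:bd3}, keeping only the $\ln(1/\ol q)$ branch. In fact your citation of~\eqref{eq:red5} is more precise than the paper's text, which (apparently by a typo) points to~\eqref{eq:red3} even though~\eqref{eq:red5} is the reduction that handles $[0,1]$--valued random variables $Z_i$ rather than indicator functions.
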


For the $\chi^2$--divergence now,
the combination of, e.g., \eqref{eq:red2} in the finite and uniform case and~\eqref{eq:chi2LB}
leads to the following inequality.

\begin{lemma}
\label{lm:chi2}
Given an underlying measurable space,
for all probability pairs $\P_i,\,\Q_i$ and all events $A_i$ (not necessarily disjoint),
where $i \in \{1,\ldots,N\}$, with $0 < \frac{1}{N} \sum_{i=1}^N \Q_i(A_i) < 1$, we have
\[
\frac{1}{N} \sum_{i=1}^N \P_i(A_i) \leq
\frac{1}{N} \sum_{i=1}^N \Q_i(A_i) +
\sqrt{\frac{1}{N} \sum_{i=1}^N \Q_i(A_i)}
\,
\sqrt{\frac{1}{N} \sum_{i=1}^N \chi^2 \bigl(\P_i, \Q_i \bigr)}\,.
\]
In particular, if $N \geq 2$ and the $A_i$ form a partition,
\[
\frac{1}{N} \sum_{i=1}^N \P_i(A_i) \leq \frac{1}{N} +
\frac{1}{\sqrt{N}}
\,
\sqrt{\frac{1}{N} \inf_{\Q} \sum_{i=1}^N \chi^2 \bigl(\P_i, \Q \bigr)}\,.
\]
\end{lemma}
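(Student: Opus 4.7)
The plan is to combine the two ingredients highlighted in the preceding subsections: the reduction to Bernoulli distributions via~\eqref{eq:red2} in the finite case with uniform weights $\alpha_i = 1/N$, and the lower bound~\eqref{eq:chi2LB} on the $\chi^2$--divergence between Bernoulli distributions.

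First, I apply~\eqref{eq:red2} with $\alpha_i = 1/N$ and the $f$--divergence chosen to be $\chi^2$. Setting
\[
\ol{p} = \frac{1}{N} \sum_{i=1}^N \P_i(A_i)\,, \quad \ol{q} = \frac{1}{N} \sum_{i=1}^N \Q_i(A_i)\,, \quad \ol{D} = \frac{1}{N} \sum_{i=1}^N \chi^2(\P_i,\Q_i)\,,
\]
the reduction gives $\chi^2\bigl(\Ber(\ol{p}),\Ber(\ol{q})\bigr) \leq \ol{D}$.

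Second, I invoke~\eqref{eq:chi2LB}, which yields $\ol{p} \leq \ol{q} + \sqrt{\ol{q} \, \chi^2\bigl(\Ber(\ol{p}),\Ber(\ol{q})\bigr)}$. Monotonicity of the square root together with the bound just obtained then gives $\ol{p} \leq \ol{q} + \sqrt{\ol{q}\, \ol{D}}$, which is exactly the first claimed inequality. Note that the hypothesis $0 < \ol{q} < 1$ is used only to be consistent with the regime of~\eqref{eq:chi2LB}, though the bound remains trivially valid at the endpoints.

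For the special case where $N \geq 2$ and the $A_i$ form a partition, I would specialize the general inequality by taking $\Q_i = \Q$ common to all $i$ for some fixed probability distribution $\Q$. Since the $A_i$ form a partition of $\Omega$, one has $\sum_{i=1}^N \Q(A_i) = 1$, hence $\ol{q} = 1/N$. Substituting into the general inequality yields
\[
\frac{1}{N} \sum_{i=1}^N \P_i(A_i) \leq \frac{1}{N} + \frac{1}{\sqrt{N}} \sqrt{\frac{1}{N} \sum_{i=1}^N \chi^2(\P_i,\Q)}\,,
\]
and taking the infimum over $\Q$ on the right-hand side concludes. There is no real obstacle here: the proof is a routine plug-and-play of the two ingredients, the only small care being the specialization $\Q_i \equiv \Q$ that forces $\ol{q} = 1/N$ via the partition property.
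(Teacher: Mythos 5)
Your proof is correct and follows exactly the same route the paper indicates for this lemma, namely combining the finite uniform case of the reduction~\eqref{eq:red2} with the lower bound~\eqref{eq:chi2LB}, and then specializing to a constant alternative $\Q$ under the partition assumption to get $\ol{q}=1/N$ before taking the infimum over $\Q$.
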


Similarly, for the Hellinger distance,
the simplest reduction \eqref{eq:red2} in the finite and uniform case
together with the lower bound~\eqref{eq:LB-H} yields
the following bound.

\begin{lemma}
\label{lm:Hell}
Given an underlying measurable space,
for all probability pairs $\P_i,\,\Q_i$ and all events $A_i$ (not necessarily disjoint),
where $i \in \{1,\ldots,N\}$, with $0 < \frac{1}{N} \sum_{i=1}^N \Q_i(A_i) < 1$, we have
\[
\frac{1}{N} \sum_{i=1}^N \P_i(A_i) \leq
\frac{1}{N} \sum_{i=1}^N \Q_i(A_i) +
\sqrt{\frac{1}{N} \sum_{i=1}^N H^2 \bigl(\P_i, \Q_i \bigr)}
\sqrt{1 - \frac{1}{4N} \sum_{i=1}^N H^2 \bigl(\P_i, \Q_i \bigr)}\,.
\]
In particular, if $N \geq 2$ and the $A_i$ form a partition,
\[
\frac{1}{N} \sum_{i=1}^N \P_i(A_i) \leq \frac{1}{N} +
\inf_{\Q}  \sqrt{\frac{1}{N} \sum_{i=1}^N H^2 \bigl(\P_i, \Q \bigr)}
\sqrt{1 - \frac{1}{4N} \sum_{i=1}^N H^2 \bigl(\P_i, \Q \bigr)}
\leq \frac{1}{N} +
\inf_{\Q}  \sqrt{\frac{1}{N} \sum_{i=1}^N H^2 \bigl(\P_i, \Q \bigr)}\,.
\]
\end{lemma}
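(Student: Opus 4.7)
\bigskip
\noindent\textbf{Proof plan for Lemma~\ref{lm:Hell}.}
The plan is to follow exactly the two-ingredient recipe advertised in Section~\ref{sec:prooftech}: reduce to Bernoulli distributions via~\eqref{eq:red2} and then invoke the Hellinger lower bound~\eqref{eq:LB-H}. Concretely, applying~\eqref{eq:red2} with uniform weights $\alpha_i = 1/N$ and the Hellinger $f$--divergence yields
\[
h^2\!\Biggl(\frac{1}{N}\sum_{i=1}^N \P_i(A_i),\,\frac{1}{N}\sum_{i=1}^N \Q_i(A_i)\Biggr)
\leq \frac{1}{N}\sum_{i=1}^N h^2\bigl(\P_i(A_i),\Q_i(A_i)\bigr)
\leq \frac{1}{N}\sum_{i=1}^N H^2(\P_i,\Q_i)\,.
\]
Introduce the shorthand $\ol p = \frac{1}{N}\sum_i \P_i(A_i)$, $\ol q = \frac{1}{N}\sum_i \Q_i(A_i)$, and $\ol D = \frac{1}{N}\sum_i H^2(\P_i,\Q_i)$, so that the reduction reads $h^2(\ol p,\ol q) \leq \ol D$.

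The second step applies~\eqref{eq:LB-H} at the Bernoulli pair $(\ol p, \ol q)$, giving
\[
\ol p \leq \ol q + \sqrt{h^2(\ol p,\ol q)\bigl(1 - h^2(\ol p,\ol q)/4\bigr)}\,.
\]
To substitute the bound $h^2(\ol p,\ol q) \leq \ol D$ into this expression, one needs the function $x \mapsto \sqrt{x(1-x/4)}$ to be non-decreasing on the relevant range. This is where a small verification is required (and is the only mildly delicate point of the proof): since $x(1-x/4)$ has derivative $1 - x/2$, it is non-decreasing on $[0,2]$, and since Hellinger squared always lies in $[0,2]$, both $h^2(\ol p,\ol q)$ and $\ol D$ belong to $[0,2]$. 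So substitution is legitimate and yields the first displayed inequality of the lemma.

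For the second part, assume $N \geq 2$ and that the $A_i$ partition $\Omega$. Choosing $\Q_i = \Q$ for all $i$ in the first inequality gives $\ol q = \frac{1}{N}\sum_i \Q(A_i) = \Q(\Omega)/N = 1/N$, which lies in $(0,1)$ thanks to $N \geq 2$, so the hypothesis of the lemma is satisfied. Applying the first part with this choice and then taking the infimum over $\Q$ produces
\[
\ol p \leq \frac{1}{N} + \inf_{\Q} \sqrt{\frac{1}{N}\sum_{i=1}^N H^2(\P_i,\Q)}\,\sqrt{1 - \frac{1}{4N}\sum_{i=1}^N H^2(\P_i,\Q)}\,,
\]
and the looser form follows by bounding the second square root by $1$. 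The main obstacle, as noted, is purely the monotonicity check justifying the substitution of $\ol D$ for $h^2(\ol p,\ol q)$ inside the square root factor $\sqrt{x(1-x/4)}$; everything else is routine application of the two ingredients already established.
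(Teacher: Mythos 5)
Your proof is correct and is exactly the route the paper intends (the paper simply cites the combination of~\eqref{eq:red2} and~\eqref{eq:LB-H} without writing out the details). You also rightly identify and verify the one non-trivial step, namely the monotonicity of $x \mapsto \sqrt{x(1-x/4)}$ on $[0,2]$, which is what legitimizes replacing $h^2(\ol p,\ol q)$ by its upper bound $\ol D$ inside that expression.
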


\subsection{Comments on these bounds}

Section~\ref{sec:sharpness} in Appendix
discusses the sharpness of the bounds obtained above,
for the case of the Kullback-Leibler divergence.

Section~\ref{sec:Ext} provides a pointer to an extended version of this
article where the choice of a good constant alternative
distribution $\Q$ is studied. The examples of bounds derived
in Section~\ref{sec:excombi} show indeed that when
the $A_i$ form a partition, the upper bounds
feature an average $f$--divergence of the form
\[
\frac{1}{N} \inf_{\Q} \sum_{i=1}^N \Div(\P_i,\Q)
\]
and one may indeed wonder what $\Q$ should be chosen and
what bound can be achieved. Section~\ref{sec:Ext} points to
a discussion of these matters.

\newpage
\section{Main applications}
\label{sec:applis}

We present two new applications of Fano's inequality, with $[0,1]$--valued random variables $Z_i$ or $Z_{\theta}$.
The topics covered are:
\begin{itemize}
\item[--] Bayesian posterior concentration rates;
\item[--] robust sequential learning (prediction of individual sequences) in the
case of sparse losses.
\end{itemize}
As can be seen below, the fact that we are now able to consider arbitrary $[0,1]$--valued random variables $Z_{\theta}$ on a continuous parameter space $\Theta$ makes the proof of the Bayesian posterior concentration lower bound quite simple.

Two more applications will also be presented in Section~\ref{sec:otherapplications};
they have a different technical flavor, as they rely on only one pair of distributions,
i.e., $N=1$.

\subsection{Lower bounds on Bayesian posterior concentration rates}
\label{sec:bayesianPosteriorRates}

In the next paragraphs we show how our continuous Fano's inequality can be used in a simple fashion to derive lower bounds for posterior concentration rates.

\paragraph{Setting and Bayesian terminology.} We consider the following density estimation setting: we observe a sample of independent and identically distributed random variables $X_{1:n} = (X_1,\ldots,X_n)$ drawn from a probability distribution $P_{\theta}$ on $(\cX,\cF)$, with a fixed but unknown $\theta \in \Theta$. We assume that the measurable parameter space $(\Theta,\mathcal{G})$ is equipped with a prior distribution $\pi$ and that all $P_{\theta'}$ have a density $p_{\theta'}$ with respect to some reference measure $\mathfrak{m}$ on $(\cX,\cF)$.
We also assume that $(x,\theta') \mapsto p_{\theta'}(x)$ is $\cF \otimes \mathcal{G}$--measurable.
We can thus consider the transition kernel $(x_{1:n}, A) \mapsto \P_{\pi}(A\,|\,x_{1:n})$ defined for all $x_{1:n} \in \cX^n$ and all sets $A \in \mathcal{G}$ by
\begin{equation}
\displaystyle \P_{\pi}(A\,|\,x_{1:n}) = \frac{\displaystyle{\bigintsss_{\! A} \, \prod_{i=1}^n p_{\theta'}(x_i) \d\pi(\theta')}}{\displaystyle{\bigintsss_{\Theta} \, \prod_{i=1}^n p_{\theta'}(x_i) \d\pi(\theta')}}
\label{eq:def-posterior}
\end{equation}
if the denominator lies in $(0,+\infty)$; if it is null or infinite, we set, e.g., $\P_{\pi}(A\,|\,x_{1:n}) = \pi(A)$.
The resulting random measure $\P_{\pi}(\,\cdot\,\,|\,X_{1:n})$ is known as the \emph{posterior} distribution.
\smallskip

Let $\ell:\Theta \times \Theta \to \R_+$ be a measurable loss function that we assume to be a pseudo-metric\footnote{The only difference with a metric is that we allow $\ell(\theta,\theta')=0$ for $\theta \neq \theta'$.}. A posterior concentration rate with respect to $\ell$ is a sequence $(\epsilon_n)_{n \geq 1}$ of positive real numbers such that, for all $\theta \in \Theta$,
\[
\E_{\theta}\Bigl[\P_{\pi}\bigl(\theta': \ell(\theta',\theta) \leq \epsilon_n \, \big| \, X_{1:n} \bigr)\Bigr] \longrightarrow 1 \qquad \textrm{as } n \to +\infty\,,
\]
where $\E_{\theta}$ denotes the expectation with respect to $X_{1:n}$ where each $X_j$ has the $P_\theta$ law. The above convergence guarantee means that, as the size $n$ of the sample increases, the posterior mass concentrates in expectation on an $\epsilon_n$--neighborhood of the true parameter $\theta$. Several variants of this definition exist (e.g., convergence in probability or almost surely; or $\epsilon_n$ that may depend on $\theta$). Though most of these definitions can be handled with the techniques provided below, we only consider this one for the sake of conciseness.

\paragraph{Minimax posterior concentration rate.}
As our sequence $(\epsilon_n)_{n \geq 1}$ does not depend on the specific $\theta \in \Theta$ at hand,
we may study uniform posterior concentration rates: sequences $(\epsilon_n)_{n \geq 1}$ such that
\begin{equation}
\label{eq:defminimaxposteriorconcentration}
\inf_{\theta \in \Theta} \, \E_{\theta}\Bigl[\P_{\pi}\bigl(\theta': \ell(\theta',\theta) \leq \epsilon_n \, \big| \, X_{1:n} \bigr)\Bigr] \longrightarrow 1 \qquad \textrm{as } n \to +\infty\,.
\end{equation}
The minimax posterior concentration rate is given by a sequence $(\epsilon_n)_{n \geq 1}$ such that~\eqref{eq:defminimaxposteriorconcentration} holds for some prior $\pi$ while there exists a constant $\gamma \in (0,1)$
such that for all priors $\pi'$ on $\Theta$,
\[
\limsup_{n \to +\infty} \,\, \inf_{\theta \in \Theta} \, \E_{\theta}\Bigl[\P_{\pi'}\bigl(\theta': \ell(\theta',\theta) \leq \gamma \, \epsilon_n \, \big| \, X_{1:n} \bigr)\Bigr] < 1\,.
\]
We focus on proving the latter statement and provide a general technique to do so.

\begin{proposition}[A posterior concentration lower bound in the finite-dimensional Gaussian model]
\label{prop:lowerbound-posterior-minimax}
\ \\
Let $d \geq 1$ be the ambient dimension, $n \geq 1$ the sample size, and $\sigma>0$ the standard deviation. Assume we observe an $n$--sample $X_{1:n} = (X_1,\ldots,X_n)$ distributed according to $\cN(\theta,\sigma^2 I_d)$ for some unknown $\theta \in \R^d$. Let $\pi'$ be any prior distribution on $\R^d$. Then the posterior distribution $\P_{\pi'}(\,\cdot\,\,|\,X_{1:n})$ defined in~\eqref{eq:def-posterior} satisfies, for the Euclidean loss $\ell(\theta',\theta) = \Vert \theta'-\theta\Vert_2$ and for $\epsilon_n = (\sigma/8) \sqrt{d/n}$,
\[
\inf_{\theta \in \R^d} \E_{\theta}\Bigl[\P_{\pi'}\bigl(\theta': \Vert \theta'-\theta\Vert_2 \leq \epsilon_n \, \big| \, X_{1:n} \bigr)\Bigr] \leq c_d \,,
\]
where $(c_d)_{d \geq 1}$ is a decreasing sequence such that $c_1 \leq 0.55$, $c_2 \leq 0.37$, and
$c_d \to 0.21$ as $d \to +\infty$.
\end{proposition}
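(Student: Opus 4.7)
The plan is to apply the random-variable form of the continuous Fano inequality---namely, the reduction~\eqref{eq:red5} combined with the square-root lower bound~\eqref{eq:bd3}. I introduce the $[0,1]$--valued random variables
\[
Z_\theta = \P_{\pi'}\bigl(\theta' : \Vert\theta'-\theta\Vert_2 \leq \epsilon_n \,\big|\, X_{1:n}\bigr), \qquad \theta \in \R^d,
\]
so that the quantity of interest is $\inf_{\theta \in \R^d} \E_\theta\bigl[Z_\theta\bigr]$, with $\P_\theta = \cN(\theta,\sigma^2 I_d)^{\otimes n}$. The $Z_\theta$ are not required to integrate to~$1$, which is exactly the situation handled by the random-variable extension. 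First I would upper bound the infimum by the average $\int \E_\theta[Z_\theta]\d\nu(\theta)$ where $\nu$ is the uniform distribution on the Euclidean ball $B_2(0,R)$, with radius $R = t\,\epsilon_n$ and $t > 1$ a free parameter to be optimized at the end.

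Next, choosing the constant alternative $\Q = \cN(0,\sigma^2 I_d)^{\otimes n}$, I would evaluate the two averages appearing in~\eqref{eq:red5}. For the numerator, the Gaussian identity $\KL(\P_\theta,\Q) = n\Vert\theta\Vert_2^2/(2\sigma^2)$ together with $\Vert\theta\Vert_2 \leq R$ on the support of $\nu$ yields $\int \KL(\P_\theta,\Q)\d\nu(\theta) \leq nR^2/(2\sigma^2) = t^2 d/128$ after substituting $\epsilon_n = (\sigma/8)\sqrt{d/n}$. For the denominator, a Tonelli exchange together with the uniform volume comparison $\nu\bigl(B_2(\theta',\epsilon_n)\bigr) \leq (\epsilon_n/R)^d$, which holds for every $\theta' \in \R^d$ and is independent of the data or of $\pi'$, gives
\[
\int \E_\Q[Z_\theta]\d\nu(\theta) = \E_\Q\!\left[\int \P_{\pi'}(\d\theta'\,|\,X_{1:n})\,\nu\bigl(B_2(\theta',\epsilon_n)\bigr)\right] \leq \left(\frac{\epsilon_n}{R}\right)^{\!d} = t^{-d}.
\]

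Plugging these two estimates into~\eqref{eq:red5} and then into~\eqref{eq:bd3}---and using monotonicity in $\ol q$ of the right-hand side of~\eqref{eq:bd3} to substitute the upper bound $t^{-d}$ for the exact average---leads to
\[
\inf_{\theta \in \R^d} \E_\theta\bigl[Z_\theta\bigr] \leq t^{-d} + \frac{t}{\sqrt{128\,\max\bigl\{\ln t,\, 2/d\bigr\}}} \;=:\; f_d(t).
\]
Setting $c_d = \inf_{t > 1} f_d(t)$, one reads off the asymptotic behavior directly. As $d \to \infty$ the first term vanishes and the minimum of $t/\sqrt{\ln t}$ over $t > 1$ is attained at $t = \sqrt{e}$ with value $\sqrt{2e}$, so that $c_d \to \sqrt{e}/8 \approx 0.21$. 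For $d = 1,2$ the saturation $\max\{\ln t,\,2/d\} = 2/d$ may be binding (when $t \leq e^{2/d}$); a direct numerical optimization in each of the two regimes gives the values $c_1 \leq 0.55$ and $c_2 \leq 0.37$. Monotonicity of $c_d$ then follows by inspection, since at a fixed $t$ optimal for some $d_0$ the function $f_d(t)$ is non-increasing in $d$ once $d\ln t \geq 2$, and this condition is met by the optimal $t$ for all $d \geq 2$.

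The only real obstacle is the bookkeeping of the regime switch in the square-root denominator for small $d$ and the associated numerical optimizations; everything else is a direct combination of~\eqref{eq:red5} with~\eqref{eq:bd3}. Conceptually, the strength of the random-variable extension of Fano's inequality is fully exploited: there is no need to impose a disjoint-partition structure on $\R^d$, and the uniform prior on a ball together with the $\theta'$--uniform volume estimate furnishes an essentially alternative-free bound on $\ol q$ that does not even depend on the specific form of $\Q$.
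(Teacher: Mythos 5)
Your proposal is essentially the paper's proof: same random variables $Z_\theta$, same uniform prior $\nu$ on a Euclidean ball of radius proportional to $\epsilon_n$, same constant alternative $\Q = \cN(0,\sigma^2 I_d)^{\otimes n}$, the same Gaussian $\KL$ computation and Tonelli exchange for the $\ol q$ bound, and the same combination of~\eqref{eq:red5} with~\eqref{eq:bd3}. The only substantive difference is that you retain the full $\max\{\ln(1/\ol q),\,2\}$ in the denominator, whereas the paper lower bounds this by $\ln(1/\ol q)$ right away, yielding the cleaner $c_d = \inf_{\rho>1}\bigl\{\rho^{-d} + \rho/(8\sqrt{2\ln\rho})\bigr\}$. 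Your refinement is a little sharper numerically (for instance, with $d=1$ and $t=4$ the saturation at $2$ gives $1/4 + 4/16 = 0.5$, better than the paper's $0.55$), but it comes at a cost: the paper's $c_d$ is trivially decreasing in $d$ because, at fixed $\rho$, only the $\rho^{-d}$ term depends on $d$; your $f_d(t)$ has a $d$-dependent $\max\{\ln t, 2/d\}$ in the denominator, and your stated argument for monotonicity (saturation once $d\ln t \geq 2$) only connects $c_d$ for $d\ge 2$ but not $c_1$ to $c_2$, since for $d=1$ the optimal $t\approx 4$ gives $\ln t < 2$. This is easily patched (e.g., note your $c_d$ is dominated by the paper's and simply quote the latter), but as written there is a small gap in that part of the argument. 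Everything else checks out.
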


This proposition indicates that the best possible posterior concentration rate is at best $\sigma \sqrt{d/n}$ up to a multiplicative constant; actually, this order of magnitude is the best achievable posterior concentration rate,
see, e.g., \citet[Chapter~8]{LeCamYang-00-AsymptoticsinStatistics}. \medskip

There are at least two ways to prove the lower bound of Proposition~\ref{prop:lowerbound-posterior-minimax}. A first one is to use a well-known conversion of ``good'' Bayesian posteriors into ``good'' point estimators, which indicates that lower bounds for point estimation can be turned into lower bounds for posterior concentration. For the sake of completeness, we recall this conversion in Appendix~\ref{sec:bayesianToPoint} and provide a nonasymptotic variant of Theorem~2.5 by \citet{GhosalETAL-00-ConvergenceRatesPosterior}.

The second method---followed in the proof below---is however more direct. We use our most general continuous Fano's inequality with the random variables $Z_{\theta} =  \P_{\pi'}\bigl(\theta': \Vert \theta'-\theta\Vert_2 \leq \epsilon_n \, \big| \, X_{1:n} \bigr) \in [0,1]$. \bigskip

\begin{proof}
We may assume, with no loss of generality, that the probability space on which $X_{1:n}$ is defined is $(\R^d)^n$ endowed with its Borel $\sigma$--field and the probability measure $\P_\theta = \cN(\theta,\sigma^2)^{\otimes n}$.
Let $\nu$ denote the uniform distribution on the Euclidean ball $B(0,\rho \epsilon_n) = \bigl\{ u \in \R^d : \Vert u \Vert_2 \leq \rho \epsilon_n \bigr\}$ for some $\rho>1$ to be determined by the analysis. Then, by the continuous Fano inequality in the form given by the combination of \eqref{eq:red5} and \eqref{eq:bd3}, with $\Q_{\theta} = \P_0 = \cN(0,\sigma^2)^{\otimes n}$, where $0$ denotes the null vector of $\R^d$, and with the $[0,1]$--valued random variables $Z_{\theta} =  \P_{\pi'}\bigl(\theta': \Vert \theta'-\theta\Vert_2 \leq \epsilon_n \, \big| \, X_{1:n} \bigr)$, we have
\begin{align}
\inf_{\theta \in \R^d} \E_{\theta}\bigl[Z_{\theta}\bigr] \leq \int_{B(0,\rho \epsilon_n)} \E_{\theta}\bigl[Z_{\theta}\bigr] \d\nu(\theta) & \leq \int_{B(0,\rho \epsilon_n)} \E_{0} \bigl[ Z_{\theta} \bigr] \d\nu(\theta) + \sqrt{\frac{\displaystyle{\int_{B(0,\rho \epsilon_n)} \KL(\P_{\theta},\P_0)\d\nu(\theta)}}{- \displaystyle{\ln \int_{B(0,\rho \epsilon_n)} \E_{0} \bigl[ Z_{\theta} \bigr] \d\nu(\theta)}}} \nonumber \\
& \leq \left(\frac{1}{\rho}\right)^{\! d}
+ \sqrt{\frac{n \rho^2 \epsilon_n^2 / (2 \sigma^2)}{d \ln \rho}}\,,
\label{eq:lowerbound-posterior-Gaussian-1bis}
\end{align}
where the last inequality follows from \eqref{eq:KLupperbound} and \eqref{eq:lowerbound-posterior-Gaussian-3} below. First note that, by independence, $\KL \bigl(\P_{\theta}, \P_0 \bigr) = n \KL\bigl( \cN(\theta,\sigma^2), \, \cN(0,\sigma^2) \bigr) = n \norm{\theta}_2^2 / (2 \sigma^2)$,
so that
\begin{equation}
\bigintsss_{B(0,\rho \epsilon_n)} \KL \bigl(\P_{\theta}, \P_0 \bigr) \d\nu(\theta) =
\frac{n}{2 \sigma^2} \bigintsss_{B(0,\rho \epsilon_n)} \norm{\theta}_2^2 \d\nu(\theta)
\leq
\frac{n \rho^2 \epsilon_n^2}{2 \sigma^2} \,.
\label{eq:KLupperbound}
\end{equation}

\noindent
Second, using the Fubini-Tonelli theorem (twice) and the definition of \[ Z_{\theta} = \P_{\pi'}\bigl(\theta': \Vert \theta'-\theta\Vert_2 \leq \epsilon_n \, \big| \, X_{1:n} \bigr) = \E_{\theta' \sim \P_{\pi'}(\cdot \,\, | \, X_{1:n})}\bigl[\indicator{\{\Vert \theta'-\theta\Vert_2 \leq \epsilon_n\}}\bigr]\,, \] we can see that
\begin{align}
q \defeq \bigintsss_{B(0,\rho \epsilon_n)} \E_{0} \bigl[ Z_{\theta} \bigr] \d\nu(\theta) & = \E_0\!\left[ \bigintsss_{B(0,\rho \epsilon_n)} \E_{\theta' \sim \P_{\pi'}(\cdot \,\, | \, X_{1:n})}\bigl[\indicator{\{\Vert \theta'-\theta\Vert_2 \leq \epsilon_n\}}\bigr] \d\nu(\theta) \right] \nonumber \\
& = \E_0\!\left[ \E_{\theta' \sim \P_{\pi'}(\cdot \,\, | \, X_{1:n})} \!\left[ \bigintsss_{B(0,\rho \epsilon_n)} \indicator{\{\Vert \theta'-\theta\Vert_2 \leq \epsilon_n\}} \d\nu(\theta) \right] \right] \nonumber \\
& = \E_0\biggl[ \E_{\theta' \sim \P_{\pi'}(\cdot \,\, | \, X_{1:n})} \Bigl[ \nu\bigl(B(\theta',\epsilon_n) \cap B(0,\rho \epsilon_n) \bigr) \Bigr] \biggr] \leq \left(\frac{1}{\rho}\right)^d \,, \label{eq:lowerbound-posterior-Gaussian-3}
\end{align}
where to get the last inequality we used the fact that $\nu\bigl(B(\theta',\epsilon_n) \cap B(0,\rho \epsilon_n) \bigr)$ is the ratio of the volume of the (possibly truncated) Euclidean ball $B(\theta',\epsilon_n)$ of radius $\epsilon_n$ and center $\theta'$ with the volume of the support of~$\nu$, namely, the larger Euclidean ball $B(0,\rho \epsilon_n)$, in dimension $d$.

The proof is then concluded by recalling that $\rho > 1$ was a parameter of the analysis and by picking, e.g.,
$\epsilon_n = (\sigma/8) \sqrt{d/n}$: by~\eqref{eq:lowerbound-posterior-Gaussian-1bis}, we have
\begin{align*}
\inf_{\theta \in \R^d} \E_{\theta}\Bigl[\P_{\pi}\bigl(\theta': \Vert \theta'-\theta\Vert_2 \leq \epsilon_n \, \big| \, X_{1:n} \bigr)\Bigr] =
\inf_{\theta \in \R^d} \E_{\theta}\bigl[Z_{\theta}\bigr]
\leq \inf_{\rho > 1} \left\{\left(\frac{1}{\rho}\right)^{\! d}
+ \frac{\rho}{8 \sqrt{2 \ln \rho}} \, \right\} \defeq c_d \,.
\end{align*}
We can see that $c_1 \leq 0.55$ and $c_2 \leq 0.37$ via the respective choices
$\rho = 5$ and $\rho = 3$,
while the fact that the limit is smaller than (and actually equal to) $\sqrt{e}/8 \leq 0.21$ follows from the choice $\rho = \sqrt{e}$.

Note that, when using~\eqref{eq:bd3} above, we implicitly assumed that the quantity $q$ in~\eqref{eq:lowerbound-posterior-Gaussian-3} lies in $(0,1)$. The fact that $q<1$ follows directly from the upper bound $(1/\rho)^d$ and from $\rho>1$. Besides, the condition $q>0$ is met as soon as $\P_0\bigl(\P_{\pi'}(B(0,\epsilon_n) \,\, | \, X_{1:n}) > 0\bigr) > 0$; indeed, for $\theta' \in
B(0,\epsilon_n)$, we have $\nu\bigl(B(\theta',\epsilon_n) \cap B(0,\rho \epsilon_n) \bigr) > 0$
and thus $q$ appears in the last equality of~\eqref{eq:lowerbound-posterior-Gaussian-3} as being lower bounded by the expectation of a positive function over a set with positive probability.
If on the contrary $\P_0\bigl(\P_{\pi'}(B(0,\epsilon_n) \,\, | \, X_{1:n}) > 0\bigr) = 0$, then $\P_0(Z_0>0)=0$, so that $\inf_{\theta} \E_{\theta}[Z_{\theta}] = \E_0[Z_0] = 0$, which immediately implies the bound of Proposition~\ref{prop:lowerbound-posterior-minimax}.
\end{proof}

\begin{remark} {\em
Though the lower bound of Proposition~\ref{prop:lowerbound-posterior-minimax} is only stated for the posterior distributions $\P_{\pi'}(\,\cdot\,\,|\,X_{1:n})$, it is actually valid for any transition kernel $Q(\,\cdot\,\,|\,X_{1:n})$. This is because the proof above relies on general information-theoretic arguments and does not use the particular form of $\P_{\pi'}(\,\cdot\,\,|\,X_{1:n})$. This is in the same spirit as for minimax lower bounds for point estimation.
}
\end{remark}

In Section~\ref{sec:bayesianPosteriorRates-distribDependent} we derive another type of posterior concentration lower bound that is no longer uniform. More precisely, we prove a distribution-dependent lower bound that specifies how the posterior mass fails to concentrate on $\epsilon_n$--neighborhoods of $\theta$ for every $\theta \in \Theta$.

\subsection{Lower bounds in robust sequential learning with sparse losses}
\label{sec:usecase-sequential}

We consider a framework of robust sequential learning called prediction of individual
sequences. Its origins and core results are described in the monography by \citet{CBL06}.
In its simplest version, a decision-maker and an environment play repeatedly as follows:
at each round $t \geq 1$, and simultaneously, the environment chooses a vector of losses
$\ell_t = (\ell_{1,t},\ldots,\ell_{N,t}) \in [0,1]^N$ while the decision-maker picks
an index $I_t \in \{1,\ldots,N\}$, possibly at random. Both players
then observe $\ell_t$ and~$I_t$. The decision-maker wants to minimize her cumulative
regret, the difference between her cumulative loss and the cumulative loss associated with
the best constant choice of an index: for $T \geq 1$,
\[
R_T = \sum_{t=1}^T \ell_{I_t,t} - \min_{k=1,\ldots,N} \sum_{t=1}^T \ell_{k,t}\,.
\]
In this setting the optimal regret in the worst-case is of the order of $\sqrt{T \ln(N)}$. \citet{CBFH+97} exhibited an asymptotic lower bound of $\sqrt{T \ln(N)/2}$, based on the central limit theorem and on the fact that the expectation
of the maximum of $N$ independent standard Gaussian random variables is of the order of $\sqrt{\ln(N)}$.
To do so, they considered stochastic environments drawing independently the loss
vectors $\ell_t$ according to a well-chosen distribution.

\citet{CBLS05} extended this result to a variant called label-efficient prediction, in
which loss vectors are observed upon choosing and with a budget constraint: no more than
$m$ observations within $T$ rounds. They prove an optimal and non-asymptotic lower bound
on the regret of the order of $T \sqrt{\ln(N)/m}$, based on several applications of
Fano's inequality to deterministic strategies of the decision-maker, and then, an application of Fubini's
theorem to handle general, randomized, strategies.
Our re-shuffled proof technique below shows that a single application
of Fano's inequality to general strategies would be sufficient there (details omitted).

Recently, \citet{KwPe15} considered a setting of sparse loss vectors, in which
at each round at most $s$ of the $N$ components of the loss vectors $\ell_t$ are
different from zero. They prove an optimal and asymptotic lower bound
on the regret of the order of $\sqrt{T s \ln(N)/N}$, which generalizes
the result for the basic framework, in which $s = N$. Their proof
is an extension of the proof of \citet{CBFH+97} and is based on
the central limit theorem together with additional technicalities,
e.g., the use of Slepian's lemma to deal with some dependencies arising from
the sparsity assumption.

The aim of this section is to provide a short and elementary proof of
this optimal asymptotic $\sqrt{T s \ln(N)/N}$ bound. As a side result, our bound will
even be non-asymptotic. However, for small values of $T$, given that $s/N$ is small,
picking components $I_t$ uniformly at random ensures an expected cumulative loss thus an expected cumulative regret
less than $sT/N$. The latter is smaller than $\sqrt{T s \ln(N)/N}$ for values
of $T$ of the order of $N \ln(N)/s$.
This is why the bound below involves a minimum between quantities of the order
of $\sqrt{T s \ln(N)/N}$ and $sT/N$; it matches the upper bounds on the regret that can
be guaranteed and is therefore optimal.

The expectation in the statement below
is with respect to the internal randomization used by the
decision-maker's strategy.

\begin{theorem}
For all strategies of the decision-maker, for all $s \in \{0,\ldots,N\}$,
for all $N \geq 2$, for all $T \geq 1$,
there exists a fixed-in-advance
sequence of loss vectors $\ell_1,\ldots,\ell_T$ in $[0,1]^N$ that are each $s$--sparse such that
\[
\E\bigl[R_T\bigr] = \sum_{t=1}^T \E\bigl[\ell_{I_t,t}\bigr]
- \min_{k=1,\ldots,N} \sum_{t=1}^T \ell_{k,t} \geq
\min\left\{ \frac{s}{16 N} T, \,\, \frac{1}{32} \sqrt{T \frac{s}{N} \ln N} \right\}.
\]
\end{theorem}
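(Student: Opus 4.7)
The plan is to use a uniform mixture over $N$ stochastic environments as the adversary, so that by Fubini-Tonelli some deterministic sequence achieves at least the average expected regret. For a tuning parameter $\epsilon\in[0,1]$ and each $i^*\in\{1,\ldots,N\}$, I would let $\mu_{i^*}$ be the distribution of $\indicator{S}$, where $S$ is drawn as follows: with probability $1-\epsilon$, uniformly among all $s$-subsets of $\{1,\ldots,N\}$ (call this $\mu_0$); with probability $\epsilon$, uniformly among the $s$-subsets avoiding $i^*$. Set $\P_{i^*}=\mu_{i^*}^{\otimes T}$ and $\P_0=\mu_0^{\otimes T}$; every draw is an $s$-sparse $\{0,1\}^N$-vector by construction.

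Under $\P_{i^*}$, a short computation shows that arm $i^*$ has per-round expected loss $(1-\epsilon)s/N$ while every other arm has per-round expected loss larger by exactly $g:=\epsilon s/(N-1)$. Using $\E_{i^*}\bigl[\min_k\sum_t\ell_{k,t}\bigr]\leq T(1-\epsilon)s/N$ yields
\[
\E_{i^*}[R_T]\;\geq\;g\,\bigl(T-\E_{i^*}[N_{i^*}]\bigr),\qquad N_{i^*}:=\sum_{t=1}^T\indicator{I_t=i^*}.
\]
To control $\frac{1}{N}\sum_{i^*}\E_{i^*}[N_{i^*}]$ I would apply Lemma~\ref{lm:Fano-suitesind} at each round~$t$ to the indicators $Z_{i^*}=\indicator{I_t=i^*}$ (which a.s.\ sum to one) with common alternative $\Q=\P_0$. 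An explicit counting computation gives $\chi^2(\mu_{i^*},\mu_0)=\epsilon^2 s/(N-s)$, and tensorization of KL together with $\KL\leq\chi^2$ then yields $\KL(\P_{i^*},\P_0)\leq T\epsilon^2 s/(N-s)$. Summing the resulting Fano bounds over $t\in\{1,\ldots,T\}$ produces
\[
\frac{1}{N}\sum_{i^*=1}^N\E_{i^*}[N_{i^*}]\;\leq\;\frac{T}{N}+T\epsilon\sqrt{\frac{Ts}{(N-s)\ln N}}.
\]

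Combining the two displays, the $i^*$-averaged regret is at least $gT\bigl((N-1)/N-\epsilon\sqrt{Ts/((N-s)\ln N)}\bigr)$, a one-parameter function of $\epsilon\in[0,1]$. Focusing on $s\leq N/2$ so that $N-s\geq N/2$: the choice of $\epsilon$ of the order of $\sqrt{N\ln N/(Ts)}$, whenever this is at most~$1$, produces the $\sqrt{Ts\ln N/N}$ term; in the complementary regime $T\leq c\,N\ln N/s$, taking $\epsilon=1$ (and using the tighter bound $\KL(\mu_{i^*},\mu_0)\leq\ln(N/(N-s))$) yields the $sT/N$ term. The main obstacle is purely arithmetic: calibrating the constants so that the two regimes splice into exactly $\min\{sT/(16N),\,(1/32)\sqrt{Ts\ln N/N}\}$, and separately handling the corner case $s>N/2$, where the sparsity constraint is essentially vacuous and the claim already follows from the classical dense $\sqrt{T\ln(N)/2}$ lower bound of \citet{CBFH+97}.
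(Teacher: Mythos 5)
Your proposal follows the same blueprint as the paper: average over $N$ stochastic i.i.d.\ environments, invoke Fubini to deterministicize, express the regret via the fraction of time spent on the favored arm, control that fraction with Lemma~\ref{lm:Fano-suitesind} applied to $[0,1]$-valued random variables summing to one, tensorize the Kullback--Leibler divergence, and optimize a quadratic in $\epsilon$. The genuine difference lies in the environment construction. The paper places $\Ber(1/2)$ losses on a uniformly drawn $s$-subset, shifting only the favored arm's Bernoulli parameter to $1/2-\epsilon N/s$; you instead bias the subset's law away from $i^*$ and use $\{0,1\}$-valued indicator losses. Your computation $\chi^2(\mu_{i^*},\mu_0)=\epsilon^2 s/(N-s)$ is correct, as is the inequality $\KL\leq\chi^2$, and the per-round-then-sum application of Lemma~\ref{lm:Fano-suitesind} produces the very same bound as the paper's single application to $F_i(T)=\frac{1}{T}\sum_{t=1}^T\ind{\{I_t=i\}}$. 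However, the $1/(N-s)$ dependence your $\chi^2$ carries is precisely what forces the case split on $s$: for $s=N$ your mixture is not even defined, and for $s$ close to $N$ the bound deteriorates. The paper's construction instead gives $\KL(P_i,Q)\leq\frac{s}{N}\,\kl\bigl(\tfrac12-\epsilon\tfrac{N}{s},\tfrac12\bigr)\leq 4N\epsilon^2/s$, whose $1/s$ scaling is uniformly useful over $s\in\{1,\ldots,N\}$ and thus avoids any case split. One caution on your corner case: the \citet{CBFH+97} lower bound is asymptotic and does not directly deliver the stated non-asymptotic constants; the clean fix is instead to rerun your own argument with $s'=\lfloor N/2\rfloor$ (any $s'$-sparse vector is $s$-sparse), which costs a factor at most $4$ in the $sT/N$ term and at most $2$ in the square-root term---both absorbed by the headroom in your constants ($1/(8\sqrt{2})$ versus $1/32$, and $1/2$ versus $1/16$). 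Modulo the arithmetic you acknowledge, the proposal is sound; the paper's environments are simply the choice that makes that arithmetic uniform in~$s$.
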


\begin{proof}
The case $s=0$ corresponds to instantaneous losses $\ell_{j,t}$ that are all null,
so that the regret is null as well. Our lower bound holds in this case, but is uninteresting.
We therefore focus in the rest of this proof on the case $s \in \{1,\ldots,N\}$.

We fix $\varepsilon \in \bigl(0, \, s/(2N) \bigr)$
and consider, as \citet{KwPe15} did, independent and identically distributed loss vectors $\ell_t \in [0,1]^N$,
drawn according to one distribution among $P_i$, where $1 \leq i \leq N$. Each distribution $P_i$ on $[0,1]^N$ is defined as the law of a random vector $L$ drawn in two steps as follows. We pick $s$ components uniformly at random among $\{1,\ldots,N\}$.
Then, the components $k$ not picked are associated with zero losses, $L_k = 0$.
The losses $L_k$ for picked components $k \ne i$ are drawn according to a Bernoulli distribution
with parameter $1/2$. If component $i$ is picked, its loss $L_i$ is drawn according to
a Bernoulli distribution with parameter $1/2 - \varepsilon N / s$.
The loss vector $L \in [0,1]^N$ thus generated is indeed $s$--sparse.
We denote by $P_i^T$ the $T$--th product distribution $P_i \otimes \cdots \otimes P_i$.
We will actually identify the underlying probability and the law $P_i^T$.
Finally, we denote the expectation under $P_i^T$ by $\E_i$.

Now, under $P_i^T$, the components $\ell_{k,t}$ of the loss vectors
are all distributed according to Bernoulli distributions,
with parameters $s/(2N)$ if $k \ne i$ and $s/(2N) - \varepsilon$ if $k = i$.
The expected regret, where the expectation $\E$ is with respect to the
strategy's internal randomization and the expectation $\E_i$ is with respect
to the random choice of the loss vectors, is thus larger than
\begin{align}
\nonumber
\E_i\Bigl[ \E\bigl[R_T\bigr] \Bigr]
\geq \sum_{t=1}^T \E_i \Bigl[ \E \bigl[ \ell_{I_t,t}\bigr] \Bigr]
- \min_{k=1,\ldots,N} \sum_{t=1}^T \E_i \bigl[ \ell_{k,t}\bigr]
& = \sum_{t=1}^T \frac{s}{2N}
\biggl( 1 - \varepsilon \, \E_i \Bigl[ \E \bigl[ \ind{ {\{ I_t = i \}} } \bigr] \Bigr] \biggr)
- T \biggl( \frac{s}{2N} - \varepsilon \biggr) \\
\nonumber
& = T \varepsilon \biggl( 1 - \E_i \Bigl[ \E \bigl[ F_i(T) \bigr] \Bigr] \biggr)\,,
\end{align}
where
\[
F_i(T) = \frac{1}{T} \sum_{t=1}^T \ind{\{ I_t = i \}}\,.
\]
All in all, we copied almost word for word the (standard) beginning of the proof by \citet{KwPe15},
whose first lower bound is exactly
\begin{equation}
\label{eq:suitesindsparse-1}
\sup_{\ell_1,\ldots,\ell_t} \E\bigl[R_T\bigr]
\geq \frac{1}{N} \sum_{i=1}^N \E_i\Bigl[ \E\bigl[R_T\bigr] \Bigr]
\geq T \varepsilon \biggl( 1 - \frac{1}{N} \sum_{i=1}^N \E_i \Bigl[ \E \bigl[ F_i(T) \bigr] \Bigr] \biggr).
\end{equation}
The main differences arise now: we replace a long asymptotic argument (based on the central limit theorem
and the study of the limit via Slepian's lemma) by a single application of Fano's inequality.

We introduce the distribution $Q$ on $[0,1]^N$ corresponding to the same randomization
scheme as for the $P_i$, except that no picked component is favored
and that all their corresponding losses are drawn according to
the Bernoulli distribution with parameter $1/2$.  We also denote by $\P$ the probability distribution that underlies the internal randomization of the strategy.
An application of Lemma~\ref{lm:Fano-suitesind} with $\P_i = \P \otimes P_i^T$ and $\Q_i = \P \otimes Q^T$, using that $F_1(T) + \ldots + F_N(T) = 1$ and thus $(1/N) \sum_{i=1}^N \E_{Q} \bigl[ \E[ F_i(T) ] \bigr] = 1/N$,
yields
\begin{equation}
\label{eq:suitesindsparse-2}
\frac{1}{N} \sum_{i=1}^N \E_i \Bigl[ \E \bigl[ F_i(T) \bigr] \Bigr]
\leq \frac{1}{N} + \sqrt{\frac{1}{N \ln(N)} \sum_{i=1}^N \KL\bigl(\P \otimes P_i^T, \, \P \otimes Q^{T}\bigr)}\,.
\end{equation}
By independence, we get, for all $i$,
\begin{equation}
\label{eq:suitesindsparse-2bis}
\KL\bigl(\P \otimes P_i^T, \, \P \otimes Q^T\bigr)
= \KL\bigl(P_i^T, \, Q^T\bigr)
= T \, \KL\bigl(P_i, \, Q\bigr)\,.
\end{equation}
We now show that
\begin{equation}
\label{eq:suitesindsparse-3}
\KL\bigl(P_i, \, Q\bigr)
\leq \frac{s}{N} \, \kl\biggl( \frac{1}{2}- \varepsilon \frac{N}{s},\,\frac{1}{2}\biggr)\,.
\end{equation}
Indeed, both $P_i$ and $Q$ can be seen
as uniform convex combinations
of probability distributions of the following form,
indexed by the subsets of $\{1,\ldots,N\}$ with $s$ elements and
up to permutations
of the Bernoulli distributions in the products below (which does not
change the value of the Kullback-Leibler divergences between them):
\begin{multline*}
\binom{N-1}{s-1} \ \ \mbox{distributions of the form (when $i$ is picked)} \\
\Ber\biggl( \frac{1}{2} - \varepsilon \frac{N}{s} \biggr) \otimes \bigotimes_{k=2}^s
\Ber\biggl( \frac{1}{2} \biggr) \otimes \bigotimes_{k=s+1}^N \delta_0
\qquad \mbox{and} \qquad
\bigotimes_{k=1}^s
\Ber\biggl( \frac{1}{2} \biggr) \otimes \bigotimes_{k=s+1}^N \delta_0\,,
\end{multline*}
where $\delta_0$ denotes the Dirac mass at $0$,
and
\begin{multline*}
\binom{N-1}{s} \ \ \mbox{distributions of the form (when $i$ is not picked)} \\
\bigotimes_{k=1}^s
\Ber\biggl( \frac{1}{2} \biggr) \otimes \bigotimes_{k=s+1}^N \delta_0
\qquad \mbox{and} \qquad
\bigotimes_{k=1}^s
\Ber\biggl( \frac{1}{2} \biggr) \otimes \bigotimes_{k=s+1}^N \delta_0\,.
\end{multline*}
Only the first set of distributions contributes to the Kullback-Leibler divergence.
By convexity of the
Kullback-Leibler divergence (Corollary~\ref{cor:jointconvKL--fdiv}), we
thus get the inequality
\begin{align*}
\KL\bigl(P_i, \, Q\bigr)
& \leq \frac{\displaystyle{\binom{N-1}{s-1}}}{\displaystyle{\binom{N}{s}}}
\KL \Biggl( \Ber\biggl( \frac{1}{2} - \varepsilon \frac{N}{s} \biggr) \otimes \bigotimes_{k=2}^s
\Ber\biggl( \frac{1}{2} \biggr) \otimes \bigotimes_{k=s+1}^N \delta_0, \,\,
\bigotimes_{k=1}^s
\Ber\biggl( \frac{1}{2} \biggr) \otimes \bigotimes_{k=s+1}^N \delta_0 \Biggr) \\
& = \frac{s}{N} \, \kl\biggl( \frac{1}{2}- \varepsilon \frac{N}{s},\,\frac{1}{2}\biggr)\,,
\end{align*}
where the last equality is again by independence.
Finally, the lemma stated right after this proof shows that
\begin{equation}
\label{eq:suitesindsparse-4}
\kl\biggl( \frac{1}{2}- \varepsilon \frac{N}{s} ,\,\frac{1}{2}\biggr)
\leq \frac{4 N^2 \varepsilon^2}{s^2} \,.
\end{equation}

Combining \eqref{eq:suitesindsparse-1}--\eqref{eq:suitesindsparse-4}, we proved so far
\[
\forall \varepsilon \in \bigl(0, \, s/(2N) \bigr), \qquad
\sup_{\ell_1,\ldots,\ell_t} \E\bigl[R_T\bigr]
\geq T \varepsilon \biggl( 1 - \frac{1}{N} - \sqrt{\frac{4 N T \epsilon^2}{s \ln(N)}} \biggr)
\geq T \varepsilon \biggl( \frac{1}{2} - c\,\varepsilon \biggr)\,,
\]
where we used $1/N \leq 1/2$ and denoted $c = 2 \sqrt{N T} \big/ \sqrt{s \ln(N)}$.

A standard optimization suggests the choice $\epsilon = 1/(4c)$, which is valid, i.e., is
indeed $< s/(2N)$ as required, as soon as $T > N \ln(N) / (16 s)$. In that case, we
get a lower bound $T\epsilon/4$, which corresponds to the
$\sqrt{T s \ln(N)/N}/32$ part of the lower bound.

In case $T \leq N \ln(N) / (16 s)$, we have $c \leq N/(2s)$ and the valid choice $\varepsilon = s/(4N)$
leads to the part of the lower bound given by $T\varepsilon(1/2 - c\varepsilon)
\geq T\varepsilon/4 = sT/(16N)$.
\end{proof}

\begin{lemma}
For all $p \in (0,1)$, for all $\epsilon \in (0,p)$,
\[
\kl(p-\epsilon,p) \leq \frac{\epsilon^2}{p(1-p)}\,.
\]
\end{lemma}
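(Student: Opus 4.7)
My plan is to derive the lemma from the slightly more symmetric inequality
\[
\kl(q, p) \leq \frac{(q-p)^2}{p(1-p)}, \qquad q, p \in (0,1),
\]
which upon substituting $q = p-\epsilon$ immediately yields the claim. Note that the right-hand side is exactly $\chi^2\bigl(\Ber(q),\Ber(p)\bigr)$ (the same computation as in~\eqref{eq:chi2LB}, with the roles of $p$ and $q$ swapped); thus the desired bound is a particular case of the well-known inequality $\kl \leq \chi^2$, specialized to Bernoulli distributions.

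To prove it, I will apply the elementary inequality $\ln t \leq t - 1$ (valid for all $t > 0$) twice: once with $t = q/p$, and once with $t = (1-q)/(1-p)$. This will give
\[
q \ln\frac{q}{p} \leq \frac{q(q-p)}{p} \qquad \text{and} \qquad (1-q) \ln\frac{1-q}{1-p} \leq \frac{(1-q)(p-q)}{1-p}.
\]
Summing these two bounds and factoring out $(q-p)$ will lead to
\[
\kl(q,p) \leq (q-p) \!\left(\frac{q}{p} - \frac{1-q}{1-p}\right) = (q-p) \cdot \frac{q(1-p) - p(1-q)}{p(1-p)} = \frac{(q-p)^2}{p(1-p)},
\]
which is the claim. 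Setting $q = p-\epsilon$ then gives $\kl(p-\epsilon, p) \leq \epsilon^2/\bigl[p(1-p)\bigr]$.

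There is essentially no obstacle here: the whole argument is a one-line application of $\ln t \leq t - 1$ followed by algebraic simplification. The hypothesis $\epsilon \in (0,p)$ is used only to ensure $q = p-\epsilon \in (0,1)$, so that both logarithms above are finite. In fact, the same proof yields the bound $\kl(q,p) \leq (q-p)^2/\bigl[p(1-p)\bigr]$ for every $q \in (0,1)$, with no sign restriction on $q-p$; the lemma only quotes the direction actually needed in~\eqref{eq:suitesindsparse-4}.
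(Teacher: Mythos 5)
Your proof is correct and is essentially identical to the paper's: both apply the elementary bound $\ln t \leq t-1$ (the paper writes it as $\ln(1+u)\leq u$) to the two logarithms in $\kl$, and both note this is the Bernoulli instance of $\KL \leq \chi^2$. The only cosmetic difference is that you work with $q = p-\epsilon$ and get the slightly more general two-sided statement $\kl(q,p)\leq (q-p)^2/[p(1-p)]$.
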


\begin{proof}
This result is a special case of the fact that the $\KL$ divergence is upper bounded by the $\chi^2$--divergence. We recall, in our particular case, how this is seen:
\[
\kl(p-\epsilon,p) =
(p-\epsilon) \ln \biggl( 1 - \frac{\epsilon}{p} \biggr)
+ (1-p+\epsilon) \ln \biggl( 1 + \frac{\epsilon}{1-p} \biggr)
\leq (p-\epsilon) \frac{- \epsilon}{p} + (1-p+\epsilon) \frac{\epsilon}{1-p}
= \frac{\epsilon^2}{p} + \frac{\epsilon^2}{1-p}\,,
\]
where we used $\ln(1+u) \leq u$ for all $u > -1$ to get the stated inequality.
\end{proof}

\newpage
\section{Other applications, with $N=1$ pair of distributions}
\label{sec:otherapplications}

Interestingly, Proposition~\ref{prop:FanoIT} can be useful even for $N=1$ pair of distributions. Rewriting it slightly differently, we indeed have, for all distributions $\P,\Q$ and all events $A$ with $\Q(A) \in (0,1)$,
\[
\P(A) \ln\!\left(\frac{1}{\Q(A)}\right) \leq \KL(\P,\Q) + \ln(2) \,.
\]
Solving for $\Q(A)$---and not for $\P(A)$ as was previously the case---we get
\begin{equation}
\Q(A) \geq \exp\!\left(- \frac{\KL(\P,\Q) + \ln(2)}{\P(A)}\right) \,.
\label{eq:Fano1}
\end{equation}
We applied here a classical technique in information theory due to Haroutunian;
see, for instance, \citet[page~167]{CK81}.
The inequality above also holds in the case $\Q(A) = 1$, as the right-hand side is the
exponential of a nonpositive quantity, and in the case $\Q(A) = 0$.
Indeed, we either have $\P(A) > 0$, which entails, by
the data-processing inequality (Lemma~\ref{lem:dataProcessingIneq--fdiv}),
\[
\KL(\P,\Q) \geq \kl\bigl(\P(A),\Q(A)\bigr) = +\infty\,,
\]
or $\P(A)=0$; that is, when $\Q(A) = 0$, no matter the value of $\P(A)$, the inequality
features the exponential of $-\infty$ in its right-hand side.

Similarly and more generally, for all distributions $\P,\Q$ and all $[0,1]$--valued random variables $Z$, we have, by Corollary~\ref{lem:dataProcessingIneq2--fdiv}
and the lower bound~\eqref{eq:bd1-intro},
\begin{equation}
\E_{\Q}[Z] \geq \exp\!\left(- \frac{\KL(\P,\Q) + \ln(2)}{\E_{\P}[Z]}\right) \,.
\label{eq:Fano1bis}
\end{equation}

The bound~\eqref{eq:Fano1} is similar in spirit to (a consequence of) the Bretagnolle-Huber inequality, recalled and actually improved in Section~\ref{sec:proofLBKL-BH}; see details therein, and in particular its consequence~\eqref{eq:bd4bis}. Both bounds can indeed be useful when $\KL(\P,\Q)$ is larger than a constant and $\P(A)$ is close to~$1$. \smallskip

Next we show two applications of \eqref{eq:Fano1} and~\eqref{eq:Fano1bis}: a simple proof of a large deviation lower bound for Bernoulli distributions, and a distribution-dependent posterior concentration lower bound.

\subsection{A simple proof of Cram\'{e}r's theorem for Bernoulli distributions}
\label{sec:Cramerlowerbound}

The next proposition is a well-known large deviation result on the sample mean of independent and identically distributed Bernoulli random variables. It is a particular case of Cram\'{e}r's theorem that dates back to \citet{Cra-38-TheoremeCramer,Che-52-TheoremeCramer}; see also \citet{CePe-11-ShortProofCramerR} for further references and a proof in a very general context. Thanks to Fano's inequality~\eqref{eq:Fano1}, the proof of the lower bound that we provide below
avoids any explicit change of measure (see the remark after the proof). We are grateful to Aur\'{e}lien Garivier for suggesting this proof technique to us; see also strong connections with an approach followed by
\citet[Section~2.4.2]{Hayashi17}.

\begin{proposition}[Cram\'{e}r's theorem for Bernoulli distributions]
\label{prop:lowerbound-largedeviations}
Let $\theta \in (0,1)$. Assume that $X_1,\ldots,X_n$ are independent and identically distributed random variables drawn from $\Ber(\theta)$. Denoting by $\P_{\theta}$ the underlying probability measure, we have, for all $x \in (\theta,1)$,
\[
\lim_{n \to + \infty} \, \frac{1}{n} \ln \P_{\theta}\!\left(\frac{1}{n} \sum_{i=1}^n X_i > x \right) = -\kl(x,\theta) \,.
\]
\end{proposition}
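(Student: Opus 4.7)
\medskip

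\noindent\textbf{Proposal.} The plan is to establish the two matching bounds separately. The upper bound $\limsup_n \frac{1}{n} \ln \P_\theta(\bar X_n > x) \leq -\kl(x,\theta)$ follows from the standard Chernoff/Markov argument: for any $\lambda \geq 0$, by Markov's inequality on the product $\e^{n \lambda \bar X_n}$ and by independence, one has $\P_\theta(\bar X_n > x) \leq \e^{-n \lambda x}\bigl(\theta \e^{\lambda} + (1-\theta)\bigr)^n$, and optimizing over $\lambda \geq 0$ gives the exponent $-\kl(x,\theta)$ (this is the routine Cram\'er transform computation in the Bernoulli case, using $x > \theta$). I would only sketch this step.

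\medskip

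\noindent The real content is the matching lower bound $\liminf_n \frac{1}{n} \ln \P_\theta(\bar X_n > x) \geq -\kl(x,\theta)$, which is where~\eqref{eq:Fano1} replaces the usual explicit change of measure. I would fix an auxiliary parameter $y \in (x,1)$ and apply~\eqref{eq:Fano1} on the product space $\{0,1\}^n$ with
\[
\P = \Ber(y)^{\otimes n}, \qquad \Q = \P_\theta = \Ber(\theta)^{\otimes n}, \qquad A_n = \biggl\{ \frac{1}{n} \sum_{i=1}^n X_i > x \biggr\}.
\]
By tensorization of Kullback-Leibler divergences, $\KL(\P,\Q) = n\,\kl(y,\theta)$. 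The inequality~\eqref{eq:Fano1} then reads
\[
\P_\theta(A_n) \geq \exp\!\left( -\,\frac{n\,\kl(y,\theta) + \ln(2)}{\P(A_n)} \right),
\]
so that
\[
\frac{1}{n} \ln \P_\theta(A_n) \geq -\,\frac{\kl(y,\theta) + \ln(2)/n}{\P(A_n)}\,.
\]

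\medskip

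\noindent Since $y > x$, the weak law of large numbers under $\P = \Ber(y)^{\otimes n}$ gives $\P(A_n) \to 1$. Taking the liminf in $n$ yields $\liminf_n \frac{1}{n} \ln \P_\theta(A_n) \geq -\kl(y,\theta)$. The parameter $y \in (x,1)$ being arbitrary, letting $y \downarrow x$ and using continuity of $y \mapsto \kl(y,\theta)$ on $(0,1)$ delivers the desired lower bound $-\kl(x,\theta)$. Combined with the Chernoff upper bound, this gives the claimed limit.

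\medskip

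\noindent The main obstacle is conceptual rather than technical: one has to recognize that~\eqref{eq:Fano1}, despite being an inequality about a single pair of distributions, plays exactly the role of the Cram\'er tilt. The only care needed is to pick $y$ strictly greater than $x$ so that $A_n$ is typical under $\P$ (if one took $y = x$, the central limit theorem would only give $\P(A_n) \to 1/2$ and one would lose a spurious factor of $2$ in the exponent); taking $y > x$ and then $y \downarrow x$ by continuity circumvents this issue cleanly.
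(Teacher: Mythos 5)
Your proof is correct and follows essentially the same route as the paper: both use~\eqref{eq:Fano1} with the tilted measure $\Ber(y)^{\otimes n}$ (the paper writes $y = x+\epsilon$), the same event $A_n$, tensorization of $\KL$, and a final limit $y \downarrow x$ by continuity. The one small divergence is that you invoke the weak law of large numbers to get $\P(A_n) \to 1$, whereas the paper instead reuses the Chernoff bound~\eqref{eq:CramerChernoff-upper} under $\P_{x+\epsilon}$ to get the quantitative estimate $\P_{x+\epsilon}(A_n) \geq 1 - \e^{-n\kl(x,x+\epsilon)}$; the authors make a point of this ("our short proof above bypasses any call to the law of large numbers"), so while both are valid, the paper's variant is slightly leaner in its dependencies and also yields a nonasymptotic lower bound.
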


\begin{proof}
We set $\ol{X}_n \defeq n^{-1} \sum_{i=1}^n X_i$. For the convenience of the reader we first briefly recall how to prove the upper bound, and then proceed with a new proof for the lower bound.

\ \\
\textit{Upper bound}: By the Cram\'{e}r-Chernoff method and the duality formula for the Kullback-Leibler divergence between Bernoulli distributions (see, e.g., \citealt[pages 21--24]{BoLuMa12}), we have, for all $n \geq 1$,
\begin{equation}
\label{eq:CramerChernoff-upper}
\P_{\theta}\bigl(\ol{X}_n > x\bigr) \leq \exp\Biggl(-n \sup_{\lambda>0} \biggl\{\lambda x - \ln \E_{\theta}\Bigr[e^{\lambda X_1}\Bigr]\biggr\}\Biggr) = \exp\Bigl(-n\kl(x,\theta)\Bigr) \,,
\end{equation}
that is,
\[
\forall n \geq 1, \qquad \frac{1}{n} \ln \P_{\theta}\bigl(\ol{X}_n > x\bigr) \leq - \kl(x,\theta)\,.
\]

\ \\
\textit{Lower bound}: Choose $\epsilon>0$ small enough such that $x+\epsilon <1$. We may assume with no loss of generality that the underlying distribution is $\P_{\theta} = \Ber(\theta)^{\otimes n}$.
By Fano's inequality in the form~\eqref{eq:Fano1} with the distributions $\P=\P_{x+\epsilon}$ and $\Q = \P_{\theta}$, and the event $A = \bigl\{\ol{X}_n > x \bigr\}$, we have
\[
\P_{\theta}\bigl(\ol{X}_n > x\bigr) \geq \exp\!\left(- \frac{\KL(\P_{x+\epsilon},\P_{\theta}) + \ln(2)}{\P_{x+\epsilon}\bigl(\ol{X}_n > x\bigr)}\right) \,.
\]

\noindent
Noting that $\KL(\P_{x+\epsilon},\P_{\theta}) = n \kl(x+\epsilon,\theta)$ we get
\begin{align}
\P_{\theta}\bigl(\ol{X}_n > x\bigr) & \geq \exp\!\left(- \frac{n \kl(x+\epsilon,\theta) + \ln 2}{\P_{x+\epsilon}\bigl(\ol{X}_n > x\bigr)} \right) \geq \exp\!\left(- \frac{n \kl(x+\epsilon,\theta) + \ln 2}{1-e^{-n \kl(x,x+\epsilon)}} \right) ,
\end{align}
where the last bound follows from $\P_{x+\epsilon}\bigl(\ol{X}_n > x\bigr) = 1 - \P_{x+\epsilon}\bigl(\ol{X}_n \leq x\bigr) \geq 1 - e^{-n \kl(x,x+\epsilon)}$ by a derivation similar to~\eqref{eq:CramerChernoff-upper} above. Taking the logarithms of both sides and letting $n \to +\infty$ finally yields
\[
\liminf_{n \to + \infty} \, \frac{1}{n} \ln \P_{\theta}\bigl(\ol{X}_n > x\bigr) \geq - \kl(x+\epsilon,\theta)\,.
\]
We conclude the proof by letting $\epsilon \to 0$, and by combining the upper and lower bounds.
\end{proof}

\paragraph{Comparison with an historical proof.} A classical proof for the lower bound relies on the same change of measure as the one used above, i.e., that transports the measure $\Ber(\theta)^{\otimes n}$ to $\Ber(x+\epsilon)^{\otimes n}$. The bound~\eqref{eq:CramerChernoff-upper}, or any other large deviation inequality, is also typically used therein. However, the change of measure is usually carried out explicitly by writing
\[
\P_{\theta}\bigl(\ol{X}_n > x\bigr) = \E_{\theta}\!\left[\indicator{\bigl\{\ol{X}_n > x\bigr\}}\right] = \E_{x+\epsilon}\!\left[\indicator{\bigl\{\ol{X}_n > x\bigr\}} \frac{\d \P_{\theta}}{\d \P_{x+\epsilon}}(X_1,\ldots,X_n) \right] = \E_{x+\epsilon}\!\left[\indicator{\bigl\{\ol{X}_n > x\bigr\}} e^{-n \,\hat{\KL}_n}\right] \,,
\]
where the empirical Kullback-Leibler divergence $\hat{\KL}_n$ is defined by
\[
\hat{\KL}_n \defeq \frac{1}{n} \ln\!\left(\frac{\d \P_{x+\epsilon}}{\d \P_{\theta}}(X_1,\ldots,X_n)\right) = \frac{1}{n} \sum_{i=1}^n \Biggl( \indicator{\{X_i = 1\}} \ln\!\left(\frac{x+\epsilon}{\theta}\right) + \indicator{\{X_i = 0\}} \ln\!\left(\frac{1-(x+\epsilon)}{1-\theta} \right) \Biggr) \,.
\]
The empirical Kullback-Leibler divergence $\hat{\KL}_n$ is then compared to its limit $\kl(x+\epsilon,\theta)$ via the law of large numbers. On the contrary, our short proof above bypasses any call to the law of large numbers and does not perform the change of measure explicitely,
in the same spirit as for the bandit lower bounds derived by \citet{KaCaGa-16-BestArmIdentification} and \citet{GaMeSt16}.
Note that the different and more general proof of \citet{CePe-11-ShortProofCramerR} also bypassed any call to the law of large numbers thanks to other convex duality arguments.

\subsection{Distribution-dependent posterior concentration lower bounds}
\label{sec:bayesianPosteriorRates-distribDependent}

In this section we consider the same Bayesian setting as the one described at the beginning of Section~\ref{sec:bayesianPosteriorRates}. In addition, we define the global modulus of continuity between $\KL$ and $\ell$ around $\theta \in \Theta$ and at scale $\epsilon_n > 0$ by
\[
\psi\bigl(\epsilon_n,\theta,\ell\bigr) \defeq \inf \Bigl\{\KL\bigl(P_{\theta'},P_{\theta}\bigr): \, \ell(\theta',\theta) \geq 2 \epsilon_n, \ \theta' \in \Theta \Bigr\} \,;
\]
the infimum is set to $+\infty$ if the set is empty.

Next we provide a distribution-dependent lower bound for posterior concentration rates, that is, a lower bound that holds true for every $\theta \in \Theta$, as opposed\footnote{Note however that we are here in a slightly different regime than in Section~\ref{sec:bayesianPosteriorRates}, where we addressed cases for which the uniform posterior concentration condition~\eqref{eq:posteriorlowerbound-DD-condition} was proved to be impossible at scale $\epsilon_n$ (and actually took place at a slightly larger scale $\epsilon_n'$).} to the minimax lower bound of Section~\ref{sec:bayesianPosteriorRates}. Theorem~\ref{thm:lowerbound-posterior-distribDependent} below indicates that, if the $\ell$--ball around $\theta$ with radius $\epsilon_n$ has an expected posterior mass close to $1$ uniformly over all $\theta \in \Theta$, then this posterior mass cannot be too close to $1$ either. Indeed, Inequality~\eqref{eq:posteriorlowerbound-DD-conclusion} provides a lower bound on the expected posterior mass outside of this ball. The term $n \, \psi(\epsilon_n,\theta,\ell)$ within the exponential is a way to quantify how difficult it can be to distinguish between the two product measures $P_{\theta'}^{\otimes n}$ and $P_{\theta}^{\otimes n}$ when $\ell(\theta',\theta) \geq 2 \epsilon_n$.

\begin{theorem}[Distribution-dependent posterior concentration lower bound]
\label{thm:lowerbound-posterior-distribDependent}
Assume that the posterior distribution $\P_{\pi}(\,\cdot\,\,|\,X_{1:n})$ satisfies the uniform concentration condition
\begin{equation*}
\inf_{\theta \in \Theta} \,\, \E_{\theta}\Bigl[\P_{\pi}\bigl(\theta': \ell(\theta',\theta) < \epsilon_n \, \big| \, X_{1:n} \bigr)\Bigr] \longrightarrow 1 \qquad \textrm{as } n \to +\infty\,.
\end{equation*}
Then,
for all $c > 1$, for all $n$ large enough, for all $\theta \in \Theta$,
\begin{equation}
\E_{\theta}\Bigl[\P_{\pi}\bigl(\theta': \ell(\theta',\theta) > \epsilon_n \,\big|\, X_{1:n} \bigr)\Bigr] \geq 2^{-c} \, \exp\Bigl(- c \, n \, \psi(\epsilon_n,\theta,\ell)\Bigr) \,.
\label{eq:posteriorlowerbound-DD-conclusion}
\end{equation}
\end{theorem}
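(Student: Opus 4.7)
The strategy is to invoke the single-distribution Fano-type bound~\eqref{eq:Fano1bis} with $\Q = P_\theta^{\otimes n}$, with $\P = P_{\theta_1}^{\otimes n}$ for a well-chosen \emph{second} parameter $\theta_1 = \theta_1(n)$, and with the $[0,1]$--valued random variable
\[
Z_n \defeq \P_\pi\bigl(\theta'' \in \Theta : \ell(\theta'', \theta) > \epsilon_n \,\big|\, X_{1:n}\bigr)\,.
\]
The case $\psi(\epsilon_n, \theta, \ell) = +\infty$ is trivial (the right-hand side of~\eqref{eq:posteriorlowerbound-DD-conclusion} vanishes). Otherwise, for each $n$ I would pick $\theta_1(n) \in \Theta$ satisfying $\ell(\theta_1(n), \theta) \geq 2\epsilon_n$ together with $\KL\bigl(P_{\theta_1(n)}, P_\theta\bigr) \leq \psi(\epsilon_n, \theta, \ell) + \eta_n$, which is possible by the very definition of $\psi$ as an infimum; here $\eta_n \to 0$ is a free parameter whose rate will be set during the final bookkeeping.

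The key step linking $Z_n$ to the assumption is a reverse triangle-inequality argument for the pseudo-metric~$\ell$: whenever $\ell(\theta'', \theta_1) < \epsilon_n$ and $\ell(\theta_1, \theta) \geq 2\epsilon_n$, one automatically has $\ell(\theta'', \theta) > \epsilon_n$, so that
\[
Z_n \;\geq\; \P_\pi\bigl(\theta'': \ell(\theta'', \theta_1) < \epsilon_n \,\big|\, X_{1:n}\bigr)\,.
\]
Taking $\E_{\theta_1}$ on both sides and using the uniform concentration assumption at the point $\theta_1$ then yields $\E_{\theta_1}[Z_n] \geq 1 - \delta_n$ with $\delta_n \to 0$; the uniformity of the assumption over $\Theta$ is essential here, since the alternative $\theta_1 = \theta_1(n)$ is allowed to depend on $n$.

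With these ingredients in hand, I would close the proof by plugging everything into~\eqref{eq:Fano1bis} and using tensorization $\KL\bigl(P_{\theta_1}^{\otimes n}, P_\theta^{\otimes n}\bigr) = n\,\KL\bigl(P_{\theta_1}, P_\theta\bigr)$ to obtain
\[
\E_\theta[Z_n] \;\geq\; \exp\!\left( - \frac{n\,\psi(\epsilon_n, \theta, \ell) + n\eta_n + \ln(2)}{1 - \delta_n}\right)\!.
\]
Rewriting the target bound as $\exp\bigl(-c(n\psi + \ln 2)\bigr)$, all that remains is to verify that $(n\psi + n\eta_n + \ln 2)/(1 - \delta_n) \leq c(n\psi + \ln 2)$ for $n$ large enough. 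This is routine once $\eta_n$ is chosen to decay fast enough, e.g.\ $\eta_n = (c-1)(\ln 2)/(2n)$ so that $n\eta_n$ stays bounded. The only real (though minor) obstacle is to handle both regimes $\psi > 0$ and $\psi = 0$ uniformly: in the former, the $n\psi$ term dominates and the slack between $c$ and $1/(1-\delta_n) \to 1$ absorbs the additive errors; in the latter, only the $\ln 2$ term is available to absorb them, which is precisely why the statement features the multiplicative prefactor $2^{-c}$ rather than a simpler constant like $1/2$.
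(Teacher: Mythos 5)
Correct, and essentially the paper's proof: the same application of~\eqref{eq:Fano1bis} with $\P=P_{\theta^\star}^{\otimes n}$, $\Q=P_\theta^{\otimes n}$ and $Z=\P_\pi\bigl(\theta':\ell(\theta',\theta)>\epsilon_n\,\big|\,X_{1:n}\bigr)$, the same triangle-inequality inclusion $\{\ell(\cdot,\theta^\star)<\epsilon_n\}\subseteq\{\ell(\cdot,\theta)>\epsilon_n\}$, and the same tensorization of the Kullback-Leibler divergence. The paper avoids your $\eta_n$/$\delta_n$ bookkeeping by choosing $n_0$ so that the uniform concentration level is already at least $1/c$, applying~\eqref{eq:Fano1bis} for an arbitrary $\theta^\star$ with $\ell(\theta^\star,\theta)\geq 2\epsilon_n$, and only then taking the supremum over such $\theta^\star$, which delivers $\exp\bigl(-c(n\psi+\ln 2)\bigr)=2^{-c}\,\e^{-cn\psi}$ directly.
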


\noindent
The conclusion can be stated equivalently as:
\[
\liminf_{n \to +\infty} \,\,\,
\inf_{\theta \in \Theta} \,
\frac{\ln \biggl( \E_{\theta}\Bigl[\P_{\pi}\bigl(\theta': \ell(\theta',\theta) > \epsilon_n \big| X_{1:n} \bigr)\Bigr] \biggr)}{\ln(2) + n \, \psi(\epsilon_n,\theta,\ell)} \geq - 1\,.
\]
The above theorem is greatly inspired from Theorem~2.1 by \citet{HoRoSH-15-AdaptivePosteriorConcentrationRates}. Our Fano's inequality~\eqref{eq:Fano1bis} however makes the proof more direct: the change-of-measure carried out by \citet{HoRoSH-15-AdaptivePosteriorConcentrationRates} is now implicit, and no proof by contradiction is required. We also bypass one technical assumption (see the discussion after the proof). \\

\begin{proof}
We fix $c>1$. By the uniform concentration condition, there exists $n_0 \geq 1$ such that, for all $n \geq n_0$,
\begin{equation}
\inf_{\theta^\star \in \Theta} \,\, \E_{\theta^\star}\Bigl[\P_{\pi}\bigl(\theta': \ell(\theta',\theta^\star) < \epsilon_n \,\big|\, X_{1:n} \bigr)\Bigr] \geq \frac{1}{c} \;.
\label{eq:posteriorlowerbound-DD-condition}
\end{equation}
We now fix $n \geq n_0$ and $\theta \in \Theta$. We consider any $\theta^\star \in \Theta$ such that $\ell\bigl(\theta^\star,\theta\bigr) \geq 2 \epsilon_n$. Using Fano's inequality in the form of~\eqref{eq:Fano1bis} with the distributions
$\P=P_{\theta^\star}^{\otimes n}$ and $\Q = P_{\theta}^{\otimes n}$, together with the $[0,1]$--valued random variable $Z_{\theta} = \P_{\pi}\bigl(\theta': \ell(\theta',\theta) > \epsilon_n \,\big|\, X_{1:n} \bigr)$, we get
\begin{equation}
\E_{\theta}\bigl[Z_{\theta}\bigr] \geq  \exp\!\left( - \frac{\KL\Bigl(P_{\theta^\star}^{\otimes n},P_{\theta}^{\otimes n}\Bigr) + \ln 2}{\E_{\theta^\star}\bigl[Z_{\theta}\bigr]} \right) = \exp\!\left( - \frac{n \KL\bigl(P_{\theta^\star},P_{\theta}\bigr) + \ln 2}{\E_{\theta^\star}\bigl[Z_{\theta}\bigr]} \right) .
\label{eq:posterior-DD-proof1}
\end{equation}
By the triangle inequality and the assumption $\ell\bigl(\theta^\star,\theta\bigr) \geq 2 \epsilon_n$ we can see that $\bigl\{\theta': \ell(\theta',\theta) > \epsilon_n \bigr\} \supseteq \bigl\{\theta': \ell(\theta',\theta^\star) < \epsilon_n \bigr\}$, so that
\[
\E_{\theta^\star}\bigl[Z_{\theta}\bigr] \geq \E_{\theta^\star}\Bigl[\P_{\pi}\bigl(\theta': \ell(\theta',\theta^\star) < \epsilon_n \big| X_{1:n} \bigr)\Bigr] \geq \frac{1}{c}
\]
by the uniform lower bound~\eqref{eq:posteriorlowerbound-DD-condition}. Substituting the above inequality into~\eqref{eq:posterior-DD-proof1} then yields
\begin{equation*}
\E_{\theta}\bigl[Z_{\theta}\bigr] \geq \exp\biggl( - c \Bigl( n \KL\bigl(P_{\theta^\star},P_{\theta}\bigr) + \ln 2\Bigr) \biggr) \,.
\end{equation*}
To conclude the proof, it suffices to take the supremum of the right-hand side over all $\theta^\star \in \Theta$ such that $\ell\bigl(\theta^\star,\theta\bigr) \geq 2 \epsilon_n$, and to identify the definition of $\psi\bigl(\epsilon_n,\theta,\ell\bigr)$.
\end{proof}

Note that, at first sight, our result may seem a little weaker than \citet[Theorem~2.1]{HoRoSH-15-AdaptivePosteriorConcentrationRates}, because we only define $\psi\bigl(\epsilon_n,\theta,\ell\bigr)$ in terms of $\KL$ instead of a general pre-metric $d$: in other words, we only consider the case $d(\theta,\theta') = \sqrt{\KL(P_{\theta'},P_{\theta})}$. However, it is still possible to derive a bound in terms of an arbitrary pre-metric $d$ by comparing $d$ and $\KL$ after applying Theorem~\ref{thm:lowerbound-posterior-distribDependent}.

In the case of the pre-metric $d(\theta,\theta') = \sqrt{\KL(P_{\theta'},P_{\theta})}$, we bypass
an additional technical assumption used for the
the similar lower bound of \citet[Theorem~2.1]{HoRoSH-15-AdaptivePosteriorConcentrationRates}; namely, that there exists a constant $C > 0$ such that
\[
\sup_{\theta,\theta'} \, P_{\theta'}^{\otimes n}\Bigl(\cL_n(\theta') - \cL_n(\theta) \geq C n \KL\bigl(P_{\theta'},P_{\theta}\bigr) \Bigl) \longrightarrow 0 \qquad \textrm{as } n \to +\infty\,,
\]
where the supremum is over all $\theta,\theta' \in \Theta$ satisfying $\psi\bigl(\epsilon_n,\theta,\ell\bigr) \leq \KL\bigl(P_{\theta'},P_{\theta}\bigr) \leq 2 \psi\bigl(\epsilon_n,\theta,\ell\bigr)$, and where $\cL_n(\theta) = \sum_{i=1}^n \ln \bigl(\d P_{\theta}/\d \mathfrak{m}\bigr)(X_i)$ denotes the log-likelihood function with respect to a common dominating measure~$\mathfrak{m}$.
Besides, we get an improved constant in the exponential in \eqref{eq:posteriorlowerbound-DD-conclusion}, with respect to \citet[Theorem~2.1]{HoRoSH-15-AdaptivePosteriorConcentrationRates}: by a factor of $3C/c$, which, since $C \geq 1$ in most cases,
is $3C/c \approx 3C \geq 3$ when $c \approx 1$. (A closer look at their proof can yield a constant arbitrarily close to $2C$, which is still larger than our $c$ by a factor of $2C/c \approx 2 C \geq 2$.)

\newpage
\section{References and comparison to the literature}
\label{sec:ref-beg}

We discuss in this section how novel (or not novel) our results and approaches are.
We first state where our main innovation lie in our eyes, and then discuss
the novelty or lack of novelty through a series of specific points.

\paragraph{Main innovations in a nutshell.}
We could find no reference indicating that the alternative distributions $\Q_i$ and $\Q_\theta$
could vary and do not need to be set to a fixed alternative $\Q_0$,
nor that arbitrary $[0,1]$--valued random variables $Z_i$ or $Z_\theta$ (i.e.,
not summing up to~$1$) could be considered.
These two elements are encompassed in the reduction~\eqref{eq:red5}, which is to be
considered our main new result.
The first application in Section~\ref{sec:applis} relies on
such arbitrary $[0,1]$--valued random variables $Z_\theta$ (but in the second application
the finitely many $Z_i$ sum up to~$1$).

That the sets $A_i$ considered in the reduction~\eqref{eq:red1}
form a partition of the underlying measurable space or that the finitely many random variables
$Z_i$ sum up to~1 (see \citealp{Gus-03-Fano}) were typical requirements in the literature until
recently, with one exception.
Indeed, \citet{ChenETAL-14-BayesRiskLowerBounds} noted in spirit that the requirement
of forming a partition was unnecessary, which we too had been aware of as early as~\citet{ExposeChevaleret},
where we also already mentioned the fact that in particular the alternative
distribution~$\Q$ had not to be fixed and could depend on $i$ or $\theta$.

\paragraph{Generalization to $f$--divergences (not a new result).}
\citet{Gus-03-Fano} generalized Fano-type inequalities with the Kullback-Leibler divergence
to arbitrary $f$--divergences, in the case where finitely many $[0,1]$--valued random variables $Z_1+\ldots+Z_N=1$ are considered;
see also \citet{ChenETAL-14-BayesRiskLowerBounds}.
Most of the literature focuses however on
Fano-type inequalities with the Kullback-Leibler divergence, like all references discussed below.

\paragraph{On the two-step methodology used (not a new result).}
The two-step methodology of Section~\ref{sec:prooftech}, which simply notes that
Bernoulli distributions are the main case to study when establishing
Fano-type inequalities, was well-known in the cases of
disjoint events or $[0,1]$--valued random variables summing up to~$1$.
This follows at various levels of clarity from
references that will be discussed in details in this section for other matters
(\citealp{HV94}, \citealp{Gus-03-Fano}, and \citealp{ChenETAL-14-BayesRiskLowerBounds})
and other references (\citealp[Section~D]{Zhang06}, and \citealp{HaVa-11-PairsfDivergences},
which is further discussed at the beginning of Section~\ref{sec:proofLBKL}).
In particular, the conjunction of a Bernoulli reduction and the use of
a lower bound on the $\kl$ function was already present in~\citet{HV94}.

Other, more information-theoretic statements and proof techniques of Fano's inequalities for
finitely many hypotheses as in Proposition~\ref{prop:FanoIT}
can be found, e.g., in
\citet[Theorem~2.11.1]{CoTh06}, \citet[Lemma~3]{Yu-97-AssouadFanoLeCam} or \citet[Chapter~VII, Lemma~1.1]{IbHa-81-StatisticalEstimation}
(they resort to classical formulas on the Shannon entropy, the conditional entropy,
and the mutual information).

\paragraph{On the reductions to Bernoulli distributions.}
Reduction~\eqref{eq:red5} is new at this level of generality, as we indicated, but all other reductions were known,
though sometimes proved in a more involved way.
Reduction~\eqref{eq:red1} and~\eqref{eq:red2} were already known and used by~\citet[Theorems~2, 7 and~8]{HV94}.
Reduction~\eqref{eq:red3} is stated in spirit by \citet{ChenETAL-14-BayesRiskLowerBounds} with
a constant alternative $\Q_\theta \equiv \Q$; see also a detailed discussion and comparison below
between their approach and the general approach we took in Section~\ref{sec:prooftech}.
We should also mention that \citet{DuWa-13-Fanocontinuum} provided preliminary (though more involved)
results towards the continuous reduction~\eqref{eq:red3}.
Finally, as already mentioned, a reduction with random variables like~\eqref{eq:red5}
was stated in a special case in~\citet{Gus-03-Fano}, for finitely many $[0,1]$--valued
random variables with $Z_1 + \ldots + Z_N = 1$.

\paragraph{On the lower bounds on the $\kl$ function (not really a new result).}
The inequalities~\eqref{eq:bd1} are folklore knowledge.
The first inequality in~\eqref{eq:bd2} can be found in~\citet{guntuboyina2011lower};
the second inequality is a new (immediate) consequence.
The inequalities~\eqref{eq:bd3} are a consequence, which we derived on our own,
of a refined Pinsker's inequality stated by \citet{OrWe-05-PinskerDistributionDependent}.

\paragraph{In-depth discussion of two articles.}
We now discuss two earlier contributions and indicate how our results encompass them:
the ``generalized Fanos's inequality'' of~\citet{ChenETAL-14-BayesRiskLowerBounds}
and the version of Fano's inequality by \citet{Birge05Fano}, which was designed
to also cover the case where $N=2$.

\subsection{On the ``generalized Fanos's inequality'' of~\citet{ChenETAL-14-BayesRiskLowerBounds}}

The Bayesian setting considered therein is the following;
it generalizes the setting of~\citet{HV94}, whose results we discuss
in a remark after the proof of Proposition~\ref{prop:Chenetal}.

A parameter space $(\Theta,\cG)$
is equipped with a prior probability measure $\nu$. A family of probability
distributions $(\P_\theta)_{\theta \in \Theta}$ on a measurable space $(\Omega,\cF)$,
some outcome space $(\cX,\cE)$, e.g., $\cX = \R^n$, and a random variable
$X : (\Omega,\cF) \to (\cX,\cE)$ are considered. We denote by $\E_\theta$
the expectation under $\P_\theta$. Of course we may have
$(\Omega,\cF) = (\cX,\cE)$ and $X$ be the identity, in which case $\P_\theta$
will be the law of $X$ under $\P_\theta$.

The goal is either to estimate $\theta$ or to take good actions:
we consider a measurable target space $(\cA,\cH)$, that may or may not be equal to $\Theta$.
The quality of a prediction or of an action is measured by a measurable loss function
$L : \Theta \times \cA \to [0,1]$.
The random variable $X$
is our observation, based on which we construct a $\sigma(X)$--measurable
random variable $\hat{a}$ with values in $\cA$.
Putting aside all measurability issues (here and in the rest of this subsection),
the risk of $\hat{a}$
in this model equals
\[
R\bigl(\hat{a}\bigr) = \bigintsss_{\Theta} \, \E_\theta\Bigl[ L\bigl(\theta,\hat{a}\bigr) \Bigr]
\d\nu(\theta)
\]
and the Bayes risk in this model is the smallest such possible risk,
\[
\Rb = \inf_{\hat{a}} \, R\bigl(\hat{a}\bigr)\,,
\]
where the infimum is over all $\sigma(X)$--measurable random variables
with values in~$\cA$.

\citet{ChenETAL-14-BayesRiskLowerBounds}
call their main result (Corollary~5) a ``generalized Fano's inequality'';
we state it and prove it below not only for $\{0,1\}$--valued loss functions $L$ as in the original article,
but for any $[0,1]$--valued loss function. The reason behind this extension is that we not only have the reduction~\eqref{eq:red3}
with events, but we also have the reduction~\eqref{eq:red5} with $[0,1]$--valued random variables.
We also feel that our proof technique is more direct and more natural.

We only deal with Kullback-Leibler divergences, but
the result and proof below readily extend to $f$--divergences.

\begin{proposition}
\label{prop:Chenetal}
In the setting described above, the Bayes risk is always larger than
\[
\Rb \geq 1 + \frac{\displaystyle{\left( \inf_{\Q} \int_\Theta \KL(\P_\theta,\Q) \d \nu(\theta) \right)}
+ \displaystyle{\ln\!\left( 1 + \inf_{a \in \cA} \int_\Theta L(\theta,a)\d\nu(\theta) \right)}}{
\displaystyle{\ln\!\left( 1 - \inf_{a \in \cA} \int_\Theta L(\theta,a)\d\nu(\theta) \right)}}\,,
\]
where the infimum in the numerator is over all probability measures $\Q$ on $(\Omega,\cF)$.
\end{proposition}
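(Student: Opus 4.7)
The plan is to fix an arbitrary prediction $\hat a$ and any alternative distribution $\Q$ over $(\Omega,\cF)$, and to apply the continuous reduction~\eqref{eq:red5} with the $[0,1]$--valued random variables $Z_\theta \defeq 1 - L(\theta,\hat a)$ and with $\Q_\theta \equiv \Q$ (constant in $\theta$). Writing $\ol p \defeq 1 - R(\hat a) = \int_\Theta \E_{\P_\theta}[Z_\theta]\d\nu(\theta)$ and $\ol q \defeq \int_\Theta \E_{\Q}[Z_\theta]\d\nu(\theta)$, the reduction gives
\[
\kl(\ol p,\ol q) \;\leq\; \ol K(\Q) \defeq \int_\Theta \KL(\P_\theta,\Q)\d\nu(\theta).
\]

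Applying Tonelli's theorem to swap the $\Q$--expectation and the $\nu$--integral in the definition of $\ol q$, and using $\beta \defeq \inf_{a\in\cA}\int_\Theta L(\theta,a)\d\nu(\theta)$, I would obtain the uniform bound
\[
\ol q \;=\; 1 - \E_{\Q}\!\left[\int_\Theta L(\theta,\hat a)\d\nu(\theta)\right] \;\leq\; 1-\beta.
\]
It is precisely here that the random-variable extension~\eqref{eq:red5}---as opposed to the event-based reductions~\eqref{eq:red1}--\eqref{eq:red3}---matters: a generic $[0,1]$--valued loss $L(\theta,a)$ cannot be written as the indicator of an event in~$\theta$.

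To convert $\kl(\ol p,\ol q)\leq \ol K(\Q)$ into a bound involving $\beta$ rather than $\ol q$, I would invoke the refined lower bound~\eqref{eq:bd2} on $\kl$, giving
\[
\ol p \;\leq\; \phi(\ol q) \defeq \frac{\ol K(\Q) + \ln(2-\ol q)}{-\ln \ol q},
\]
and then use the estimate $\ol q\leq 1-\beta$. The substantive obstacle---and, I believe, the only one---is that $\ol q\leq 1-\beta$ pushes the numerator $\ln(2-\ol q)$ up and the denominator $-\ln\ol q$ up simultaneously, so monotonicity of the full ratio $\phi$ in $\ol q$ has to be checked separately. Differentiating $\phi$ and clearing the positive factors $(\ln q)^2$ and $q(2-q)$, the non-decrease of $\phi$ on $(0,1)$ reduces to $q\ln q + (2-q)\bigl(\ol K(\Q)+\ln(2-q)\bigr)\geq 0$ on $(0,1)$; since $\ol K(\Q)\geq 0$, it is enough that $g(q)\defeq q\ln q + (2-q)\ln(2-q)$ is non-negative on $(0,1]$, which follows from $g(1)=0$ together with $g'(q)=\ln\bigl(q/(2-q)\bigr)\leq 0$ there.

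Once monotonicity is in hand, the chain $\ol p\leq\phi(\ol q)\leq\phi(1-\beta) = \bigl(\ol K(\Q)+\ln(1+\beta)\bigr)\big/\bigl(-\ln(1-\beta)\bigr)$ rearranges into $R(\hat a)\geq 1 + \bigl(\ol K(\Q)+\ln(1+\beta)\bigr)/\ln(1-\beta)$. Taking the infimum over $\hat a$ on the left and, since only $\ol K(\Q)$ depends on $\Q$ on the right and $\ln(1-\beta)<0$, taking the infimum of $\ol K(\Q)$ over $\Q$ yields the claimed bound on $\Rb$. The degenerate cases $\beta\in\{0,1\}$ and $\ol q\in\{0,1\}$ make the bound either vacuous or trivial and can be dispatched in one line.
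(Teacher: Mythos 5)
Your proposal is correct and follows essentially the same route as the paper's proof: both fix $\hat a$ and $\Q$, apply reduction~\eqref{eq:red5} with $Z_\theta = 1 - L(\theta,\hat a)$, invoke the lower bound~\eqref{eq:bd2}, and then use a monotonicity argument in $\ol q$ together with $\ol q \leq 1-\beta$ (from Tonelli and $\hat a \in \cA$) before optimizing over $\hat a$ and $\Q$. The only minor difference is presentational: the paper decomposes the bound into the two maps $q \mapsto 1/\ln(1/q)$ and $q \mapsto \ln(2-q)/\ln(1/q)$ and asserts that both are increasing (the second without detail), whereas you differentiate the full ratio $\phi$ and reduce its non-decrease to $q\ln q + (2-q)\ln(2-q) \geq 0$ on $(0,1]$, which you then verify---so your write-up actually supplies the small monotonicity check that the paper leaves implicit.
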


\begin{proof}
We fix $\hat{a}$ and an alternative $\Q$.
The combination of~\eqref{eq:red5} and~\eqref{eq:bd2}, with $Z_\theta = 1 - L\bigl(\theta,\hat{a}\bigr)$,
yields
\begin{equation}
\label{eq:proofChen1}
1 - \bigintsss_{\Theta} \, \E_\theta\Bigl[ L\bigl(\theta,\hat{a}\bigr) \Bigr]
\d\nu(\theta) \leq
\frac{\displaystyle{\int_\Theta \KL(\P_\theta,\Q) \d \nu(\theta)} + \ln\bigl(2-q_{\hat{a}}\bigr)}{\ln\bigl(1/q_{\hat{a}}\bigr)}\,,
\end{equation}
where $\E_{\Q}$ denotes the expectation with respect to $\Q$ and
\[
q_{\hat{a}} = 1 - \bigintsss_{\Theta} \, \E_{\Q}\Bigl[ L\bigl(\theta,\hat{a}\bigr) \Bigr]
\d\nu(\theta)\,.
\]
As $q \mapsto 1/\ln(1/q)$ and $q \mapsto \ln(2-q)/\ln(1/q)$ are both increasing,
taking the supremum over the $\sigma(X)$--measurable random variables $\hat{a}$
in both sides of~\eqref{eq:proofChen1} gives
\begin{equation}
\label{eq:proofChen2}
1 - \Rb \leq
\frac{\displaystyle{\int_\Theta \KL(\P_\theta,\Q) \d \nu(\theta)} + \ln\bigl(2-q^\star\bigr)}{\ln\bigl(1/q^{\star}\bigr)}
\end{equation}
where
\begin{equation}
\label{eq:proofChen3}
q^\star = \sup_{\hat{a}} q_{\hat{a}} =
1 - \inf_{\hat{a}} \bigintsss_{\Theta} \, \E_{\Q}\Bigl[ L\bigl(\theta,\hat{a}\bigr) \Bigr]
\d\nu(\theta)
= 1 - \inf_{a \in \cA} \int_\Theta L(\theta,a)\d\nu(\theta)\,,
\end{equation}
as is proved below. Taking the infimum of the right-hand side of~\eqref{eq:proofChen2}
over $\Q$ and rearranging concludes the proof.

It only remains to prove the last inequality of~\eqref{eq:proofChen3} and actually,
as constant elements $a \in \cA$ are special cases of random variables $\hat{a}$,
we only need to prove that
\begin{equation}
\label{eq:proofChen4}
\inf_{\hat{a}} \bigintsss_{\Theta} \, \E_{\Q}\Bigl[ L\bigl(\theta,\hat{a}\bigr) \Bigr] \d\nu(\theta)
\geq \inf_{a \in \cA} \int_\Theta L(\theta,a)\d\nu(\theta)\,.
\end{equation}
Now, each $\hat{a}$ that is $\sigma(X)$--measurable can be rewritten
$\hat{a} = \overline{a}(X)$ for some measurable function $\overline{a} : \cX \to \cA$; then,
by the Fubini-Tonelli theorem:
\[
\bigintsss_{\Theta} \, \E_{\Q}\Bigl[ L\bigl(\theta,\hat{a}\bigr) \Bigr] \d\nu(\theta)
= \bigintss_{\cX} \left( \bigintsss_\Theta L\bigl(\theta,\overline{a}(x)\bigr) \d\nu(\theta) \right)
\!\d\Q(x)
\geq \bigintss_{\cX} \left( \inf_{a \in \cA} \bigintsss_\Theta L\bigl(\theta,a\bigr) \d\nu(\theta) \right)
\!\d\Q(x)\,,
\]
which proves~\eqref{eq:proofChen4}.
\end{proof}

\begin{remark}
{\em
As mentioned by \citet{ChenETAL-14-BayesRiskLowerBounds},
one of the major results of \citet{HV94}, namely, their Theorem~8, is a special case of
Proposition~\ref{prop:Chenetal}, with $\Theta = \cA$
and the loss function $L(\theta,\theta') = \ind{\{\theta \ne \theta' \}}$.
The (opposite of the) denominator in the lower bound on the Bayes risk then takes the simple form
\[
- \ln \! \left( 1 - \inf_{\theta' \in \Theta} \int_\Theta L(\theta,\theta')\d\nu(\theta) \right)
= - \ln \! \left( \sup_{\theta \in \Theta} \nu \bigl( \{\theta\} \bigr) \!\right) \defeq H_\infty(\nu)\,,
\]
which is called the infinite-order R{\'e}nyi entropy of the probability distribution~$\nu$.
\citet{HV94} only dealt with the case of discrete sets $\Theta$ but the extension to
continuous $\Theta$ is immediate, as we showed in Section~\ref{sec:prooftech}.
}
\end{remark}

\subsection{Comparison to~\citet{Birge05Fano}: \\ An interpolation between Pinsker's and Fano's inequalities}

The most classical version of Fano's inequality, that is, the right-most side of~\eqref{eq:partition-classic+ln2q} below, is quite 
impractical for small values of $N$ (cf.\ \citealp{Birge05Fano}), and even useless when $N = 2$, the latter case being straightforward to 
deal with by several well-known tools, for example, by Pinsker's inequality. 
One of the main motivations of \citet{Birge05Fano} was therefore to get an inequality that
would be useful for all $N \geq 2$. 
His inequality is stated next; it
only deals with events $A_1,\ldots,A_N$
forming a partition of the underlying measurable space.
As should be clear from its proof this assumption is crucial. (See Appendix~\ref{sec:Ext} for a pointer to an extended version of this article where
a proof following the methodology described in Section~\ref{sec:prooftech} is provided.)

\begin{theorem}[Birg{\'e}'s lemma]
\label{th:Birge}
Given an underlying measurable space $(\Omega,\cF)$,
for all $N \geq 2$,
for all probability distributions $\P_1,\ldots,\P_N$,
for all events $A_1,\ldots,A_N$ forming a partition of $\Omega$,
\[
\min_{1 \leq i \leq N} \P_i(A_i) \leq \max\!\left\{ c_N, \,\,
\frac{\ol{K}}{\ln(N)}
\right\} \qquad \mbox{where} \qquad \ol{K} = \frac{1}{N-1} \sum_{i=2}^N \KL(\P_i,\P_1)
\]
and where $(c_N)_{N \geq 2}$ is a decreasing sequence, where
each term $c_N$ is defined as the unique $c \in (0,1)$ such that
\begin{equation}
\label{def:cNBirge}
\frac{- \bigl( c \ln(c) + (1-c)\ln(1-c) \bigr)}{c} + \ln(1-c) = \ln \!\left( \frac{N-1}{N} \right).
\end{equation}
We have, for instance, $c_2 \approx 0.7587$ and $c_3 \approx 0.7127$, while
$\lim c_N = 0.63987$.
\end{theorem}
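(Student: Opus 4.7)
The plan is to apply the two-step methodology of Section~\ref{sec:prooftech}, with a careful choice of lower bound on $\kl$ tailored to the defining equation~\eqref{def:cNBirge} of $c_N$. Set $a = \min_{1 \leq i \leq N} \P_i(A_i)$. Since $(c_N)$ is a decreasing sequence with limit $\approx 0.64$, we have $c_N > 1/2$ for every $N \geq 2$, so I may assume $a > c_N > 1/2$, otherwise the conclusion is immediate.

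Taking $\P_1$ as the common alternative, I would apply the data-processing inequality (Lemma~\ref{lem:dataProcessingIneq--fdiv}) with the indicators $\indicator{A_i}$ for $i \in \{2,\ldots,N\}$, average over $i$, and invoke joint convexity of $\kl$ (Corollary~\ref{cor:jointconvKL--fdiv}), exactly as in~\eqref{eq:red2}. This yields
\[
\kl(\ol{p},\ol{q}) \leq \ol{K}, \qquad \ol{p} = \frac{1}{N-1}\sum_{i=2}^N \P_i(A_i), \qquad \ol{q} = \frac{1 - \P_1(A_1)}{N-1},
\]
where the partition property is used to rewrite $\ol{q}$. By definition of $a$ one has $\ol{p} \geq a$ and $\P_1(A_1) \geq a$, hence $\ol{q} \leq (1-a)/(N-1) \leq 1/N < 1/2 < a$.

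The next step is to lower-bound $\kl(\ol{p}, \ol{q})$ sharply enough. Rewriting $\kl(p,q) = p\ln(1/q) + (1-p)\ln(1/(1-q)) - H(p)$, where $H(p) = -p\ln p - (1-p)\ln(1-p)$, and using that $(1-p)\ln(1/(1-q)) \geq 0$, gives the refinement $\kl(p,q) \geq p\ln(1/q) - H(p)$ of~\eqref{eq:bd1}. Applied to $(\ol{p},\ol{q})$ together with $\ol{p} \geq a$, $\ln(1/\ol{q}) \geq \ln((N-1)/(1-a)) \geq 0$, and $H(\ol{p}) \leq H(a)$ (the latter since $H$ is decreasing on $[1/2,1]$ and $\ol{p} \geq a \geq 1/2$), this leads to
\[
\ol{K} \geq a\ln\!\left(\frac{N-1}{1-a}\right) - H(a) = a\ln(N-1) + a\ln a + (1-2a)\ln(1-a).
\]

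The conclusion will follow if the right-hand side is $\geq a \ln N$ whenever $a \geq c_N$, for then $a \leq \ol{K}/\ln N$. This is equivalent to $h(a) := a\ln a + (1-2a)\ln(1-a) - a\ln(N/(N-1)) \geq 0$, and a short manipulation shows that~\eqref{def:cNBirge} is precisely $h(c_N)=0$. A direct computation yields $h''(a) = 1/a + 2/(1-a) + 1/(1-a)^2 > 0$, so $h$ is strictly convex; combined with the easy boundary values $h(0^+)=0$, $h(1/2)<0$, and $h(1^-)=+\infty$, convexity identifies $c_N$ as the unique root of $h$ in $(1/2,1)$ and forces $h \geq 0$ on $[c_N,1]$. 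The main obstacle is precisely this choice of lower bound on $\kl$: the standard slack $\ln 2$ in~\eqref{eq:bd1} is too lossy to recover the exact constant $c_N$, and one must retain the full binary entropy $H(p)$ as the loss term, which is exactly what produces the $(1-2a)\ln(1-a)$ contribution matching~\eqref{def:cNBirge}.
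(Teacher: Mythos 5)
Your proof is correct and follows the same two-step template the paper uses: the identical Bernoulli reduction with $\tilde{p} = \frac{1}{N-1}\sum_{i\geq 2}\P_i(A_i)$ and $\tilde{q} = \bigl(1-\P_1(A_1)\bigr)/(N-1)$, followed by a lower bound on $\kl$ involving the binary entropy. Where you diverge is in how the constant $c_N$ is nailed down. The paper first derives a generic inequality $\kl(p,q) \geq p\bigl(\ln(1/q) - H(c_N)/c_N\bigr)$ valid for $p \geq c_N$ and $q$ small (using $H(p)\leq H(c_N)$ and then $H(c_N)\leq p\,H(c_N)/c_N$), and then plugs in $\tilde{q} \leq (1-c_N)/(N-1) = N^{-1}\exp\bigl(-H(c_N)/c_N\bigr)$, where the last equality is exactly the defining identity~\eqref{def:cNBirge}. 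You instead keep the bound $\kl(p,q) \geq p\ln(1/q) - H(p)$, substitute $\tilde{p}\geq a$, $H(\tilde{p})\leq H(a)$, $\tilde{q}\leq (1-a)/(N-1)$, arrive at $\ol{K}\geq a\ln(N-1) + a\ln a + (1-2a)\ln(1-a)$, and reduce the claim to the single inequality $h(a)\geq 0$ for $a\geq c_N$, which you certify by strict convexity of $h$ ($h''>0$) plus the sign pattern $h(0^+)=0$, $h(1/2)<0$, $h(1^-)=+\infty$. The net effect is the same, but your route is a bit more self-contained: it establishes uniqueness of $c_N$ and the sign of $h$ on $[c_N,1)$ in one convexity argument, whereas the paper relies separately on the monotonicity of $c\mapsto H(c)/c + \ln(1-c)$ noted at the outset. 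This is a legitimate and slightly cleaner way to organize the same underlying estimate.
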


However, a first drawback of the bound above lies in the $\ol{K}$
term: one cannot pick a convenient $\Q$ as in the
bounds~\eqref{eq:partition-classic+ln2q}--\eqref{eq:partition-Pinsker-fort} below.
A second drawback is that the result is about the minimum of the $\P_i(A_i)$, not about their average.
In contrast,
the versions of Fano's inequality based the $\kl$ lower bounds~\eqref{eq:bd2}, \eqref{eq:bd1}, and \eqref{eq:bd3}
respectively lead to the following inequalities, stated
in the setting of Theorem~\ref{th:Birge} and by picking constant alternatives $\Q$:
\begin{align}
\label{eq:partition-classic+ln2q}
\frac{1}{N} \sum_{i=1}^N \P_i(A_i) \leq &
\frac{\displaystyle{\frac{1}{N} \inf_{\Q} \sum_{i=1}^N \KL(\P_i,\Q)} + \ln\!\left(2-\frac{1}{N}\right)}{\ln(N)}
\leq
\frac{\displaystyle{\frac{1}{N} \inf_{\Q} \sum_{i=1}^N \KL(\P_i,\Q)} + \ln(2)}{\ln(N)}\,, \\
\label{eq:partition-Pinsker-fort}
\mbox{and} \qquad \frac{1}{N} \sum_{i=1}^N \P_i(A_i) \leq & \frac{1}{N} +
\sqrt{\frac{\displaystyle{\frac{1}{N} \inf_{\Q} \sum_{i=1}^N \KL(\P_i,\Q)}}{\max\bigl\{\ln(N),\,2\bigr\}}}\;.
\end{align}
The middle term in~\eqref{eq:partition-classic+ln2q} was derived---with a different formulation---by \citet{ChenETAL-14-BayesRiskLowerBounds}, see Proposition~\ref{prop:Chenetal} above.

\paragraph{Discussion.}
We note that unlike the right-most side of~\eqref{eq:partition-classic+ln2q},
both the middle term in~\eqref{eq:partition-classic+ln2q}
and the bound~\eqref{eq:partition-Pinsker-fort}
yield useful bounds for all $N \geq 2$, and in particular, for $N=2$.
Even better, \eqref{eq:partition-Pinsker-fort} implies both Pinsker's inequality and, lower bounding the maximum by $\ln(N)$, a bound as useful as Theorem~\ref{th:Birge} or
Proposition~\ref{prop:FanoIT} in case of a partition.
Indeed, in practice, the additional additive $1/N$ term and the additional square root do not prevent from obtaining the desired lower
bounds, as illustrated in Section~\ref{sec:usecase-sequential}.

Therefore, our inequality~\eqref{eq:partition-Pinsker-fort} provides some interpolation
between Pinsker's and Fano's inequalities: it simultaneously deals with all values $N \geq 2$.

\newpage
\section{Proofs of the lower bounds on $\kl$ stated in Section~\ref{sec:LBdiv} \\ (and proof of an improved Bretagnolle-Huber inequality)}
\label{sec:proofLBKL}

We prove in this section
the convexity inequalities~\eqref{eq:bd2}
and~\eqref{eq:bd2bis} as well as the
refined Pinsker's inequality and its consequence~\eqref{eq:bd3}.
Using the same techniques and methodology as for establishing these bounds,
we also improve in passing the Bretagnolle-Huber inequality.

The main advantage of the Bernoulli reductions of Section~\ref{sec:avg}
is that we could then capitalize in Section~\ref{sec:excombi} (and also in
Section~\ref{sec:otherapplications}) on any lower bound on the Kullback-Leibler
divergence $\kl(p,q)$ between Bernoulli distributions. In the same spirit, our key argument below to prove the refined Pinsker's inequality and the Bretagnolle-Huber inequality (which hold for arbitrary probability distributions) is in both cases an inequality between the Kullback-Leibler divergence and the total variation distance between Bernoulli distributions. This simple but deep observation was made in great generality by \citet{HaVa-11-PairsfDivergences}.

\subsection{Proofs of the convexity inequalities~\eqref{eq:bd2} and~\eqref{eq:bd2bis}}
\label{sec:proofLBKL-conv}

\begin{proof}
Inequality~\eqref{eq:bd2bis} follows from~\eqref{eq:bd2}
by noting that the function $q \in (0,1) \mapsto \ln(2-q)\big/\ln(1/q)$
is dominated by $q \in (0,1) \mapsto 0.21 + 0.79\,q$.

Now, the shortest proof of~\eqref{eq:bd2} notes that the
duality formula for the Kullback-Leibler divergence between Bernoulli distributions---already
used in~\eqref{eq:CramerChernoff-upper}---ensures that,
for all $p \in [0,1]$ and $q \in (0,1]$,
\[
\kl(p,q) = \sup_{\lambda \in \R} \biggl\{ \lambda p - \ln \Bigl( q \bigl(\e^\lambda-1\bigr) + 1 \Bigr) \biggr\}
\geq p \ln \biggl( \frac{1}{q} \biggr) - \ln(2-q)
\]
for the choice $\lambda = \ln(1/q)$.
\end{proof}

An alternative, longer but more elementary proof
uses a direct convexity argument, as in \citet[Example~II.4]{guntuboyina2011lower},
which already included the inequality of interest in the special case
when $q = 1/N$; see also \citet{ChenETAL-14-BayesRiskLowerBounds}.
We deal separately with $p=0$ and $p=1$, and thus restrict our attention
to $p \in (0,1)$ in the sequel.
For $q \in (0,1)$, as $p \mapsto \kl(p,q)$ is convex and differentiable
on~$(0,1)$, we have
\begin{equation}
\label{eq:cvx:ln2-q}
\forall \, (p,p_0) \in (0,1)^2, \qquad \quad
\kl(p,q) - \kl(p_0,q) \geq
\underbrace{\ln\!\left(\frac{p_0 (1-q)}{(1-p_0) q}\right)}_{\frac{\partial}{\partial p}\kl(p_0,q)}
(p-p_0)\,.
\end{equation}
The choice $p_0 = 1/(2-q)$ is such that
\[
\frac{p_0}{1-p_0} = \frac{1}{1-q}\,,
\qquad \mbox{thus} \qquad
\ln\!\left(\frac{p_0 (1-q)}{(1-p_0) q}\right) = \ln \biggl( \frac{1}{q} \biggr)\,,
\]
and
\[
\kl(p_0,q) = \frac{1}{2-q} \ln \biggl( \frac{1/(2-q)}{q} \biggr)
+ \frac{1-q}{2-q} \ln \biggl( \frac{(1-q)/(2-q)}{1-q} \biggr)
= \frac{1}{2-q} \ln \biggl( \frac{1}{q} \biggr)
+ \ln \biggl( \frac{1}{2-q} \biggr)\,.
\]
Inequality~\eqref{eq:cvx:ln2-q} becomes
\[
\forall \, p \in (0,1), \qquad \quad
\kl(p,q) -
\frac{1}{2-q} \ln \biggl( \frac{1}{q} \biggr)
+ \ln(2-q)
\geq \left( p - \frac{1}{2-q} \right) \ln \biggl( \frac{1}{q} \biggr)\,,
\]
which proves as well the bound~\eqref{eq:bd2}.

\subsection{Proofs of the refined Pinsker's inequality and of its consequence~\eqref{eq:bd3}}
\label{sec:proofLBKL-Pinsker}

The next theorem is a stronger version of Pinsker's inequality for Bernoulli distributions,
that was proved\footnote{We also refer the reader to~\citet[Lemma~1]{KeSa-98uai-LargeDeviationMethods} and \citet[Theorem~3.2]{BeKo-13-ConcentrationMissingMass} for dual inequalities upper bounding the moment-generating function of the Bernoulli distributions.} by \citet{OrWe-05-PinskerDistributionDependent}.
Indeed, note that the function $\phi$ defined below satisfies $\min \phi = 2$, so that the next
theorem always yields an improvement over the most classical version of Pinsker's inequality: $\kl(p,q) \geq 2 (p-q)^2$.

We provide below an alternative elementary proof for Bernoulli distributions of this refined Pinsker's inequality.
The extension to the case of general distributions, via the contraction-of-entropy property, is stated at the end of this section.

\begin{theorem}[A refined Pinsker's inequality by \citet{OrWe-05-PinskerDistributionDependent}]
\label{th:Pinskerrefined}
For all $p,q \in [0,1]$,
\[
\kl(p,q) \geq \frac{\ln\bigl((1-q)/q\bigr)}{1-2q} \, (p-q)^2
\defeq \varphi(q) \, (p-q)^2 \,,
\]
where the multiplicative factor $\varphi(q) = (1-2q)^{-1} \ln\bigl((1-q)/q\bigr)$ is defined for all $q \in [0,1]$ by extending it by continuity as $\varphi(1/2) = 2$ and $\varphi(0) = \varphi(1) = +\infty$.
\end{theorem}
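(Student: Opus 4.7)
Thanks to the symmetry $\kl(p,q) = \kl(1-p,1-q)$ and $\varphi(q) = \varphi(1-q)$, and by continuity at $q \in \{0,1\}$, it is enough to treat fixed $q \in (0,1/2]$. The case $q = 1/2$, where $\varphi(q) = 2$, reduces to the classical Pinsker inequality $\kl(p,1/2) \geq 2(p-1/2)^2$ and can be handled directly by noting that in that case the auxiliary function $F(p) \defeq \kl(p,q) - \varphi(q)(p-q)^2$ has $F'' \geq 0$ on all of $[0,1]$. So fix $q \in (0,1/2)$; the goal is to show $F \geq 0$ on $[0,1]$.

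First I would observe that $F$ vanishes to second order at \emph{two} distinct points, namely $p = q$ and $p = 1-q$. The identities $F(q) = 0$ and $F'(q) = 0$ are immediate. For $p = 1-q$, a short computation gives $\kl(1-q,q) = (1-2q)\ln((1-q)/q)$, and plugging in the definition of $\varphi(q)$ yields $\varphi(q)(1-2q)^2 = (1-2q)\ln((1-q)/q)$, hence $F(1-q) = 0$; similarly $\partial_p \kl(p,q)|_{p=1-q} = 2\ln((1-q)/q) = 2\varphi(q)(1-2q)$, so $F'(1-q) = 0$. This confirms that $p = 1-q$ is the nontrivial equality case, which any proof must respect.

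Next I would exploit the explicit form $F''(p) = 1/(p(1-p)) - 2\varphi(q)$. Since $\varphi(q) > 2$ for $q \in (0,1/2)$, the second derivative vanishes at exactly two points $p_\pm = 1/2 \pm \sqrt{1/4 - 1/(2\varphi(q))} \in (0,1)$ with $p_- + p_+ = 1$, and $F$ is convex on $[0,p_-] \cup [p_+,1]$ and concave on $[p_-,p_+]$. The key geometric claim is that $q \leq p_-$ (hence by symmetry $1-q \geq p_+$). Equivalently, $q(1-q) \leq 1/(2\varphi(q))$, i.e., $1/(q(1-q)) \geq 2\ln((1-q)/q)/(1-2q)$. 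Setting $x = (1-q)/q \geq 1$, this reduces to the one-variable inequality $x - 1/x \geq 2\ln x$ for $x \geq 1$, which is elementary: the function $h(x) = x - 1/x - 2\ln x$ satisfies $h(1) = 0$ and $h'(x) = (x-1)^2/x^2 \geq 0$.

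Finally I would conclude in two steps. On $[0,p_-]$, $F$ is convex, $q$ belongs to this interval, and $F(q) = F'(q) = 0$, so the tangent-line inequality at $q$ gives $F(p) \geq F(q) + F'(q)(p-q) = 0$ for all $p \in [0,p_-]$. Symmetrically, using $F(1-q) = F'(1-q) = 0$ and convexity of $F$ on $[p_+,1]$, one obtains $F \geq 0$ on $[p_+,1]$. On the middle interval $[p_-,p_+]$, $F$ is concave with nonnegative endpoint values (by the previous step), hence lies above the chord connecting $(p_-,F(p_-))$ and $(p_+,F(p_+))$, which is itself nonnegative; thus $F \geq 0$ on $[p_-,p_+]$ as well. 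The main delicate point is the geometric placement $q \leq p_-$: without it the tangent-line inequality at $q$ is not available on all of the outer convex interval, and the whole argument breaks down.
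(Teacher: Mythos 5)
Your proof is correct, and it takes a genuinely different route from the paper's. The paper works with the \emph{ratio} $f(p) = \kl(p,q)/(p-q)^2$ and uses Taylor's formula with integral remainder to write $f(p) = \int_0^1 \psi\bigl(q + u(p-q)\bigr)(1-u)\,\mathrm{d}u$ where $\psi(t) = 1/\bigl(t(1-t)\bigr)$; since $\psi$ is convex, $f$ is globally convex on $(0,1)$, and an antisymmetry argument (differentiating under the integral sign) shows $f'(1-q) = 0$, so the minimum is at $1-q$ with value $\varphi(q)$. You instead analyze the \emph{difference} $F(p) = \kl(p,q) - \varphi(q)(p-q)^2$, which is not globally convex; you identify its double zeros at $q$ and $1-q$, its inflection points $p_\pm$, establish the geometric condition $q \leq p_- \leq p_+ \leq 1-q$ by reducing it to $x - 1/x \geq 2\ln x$, and then do a three-interval case analysis (tangent-line bound on the two convex pieces, chord bound on the concave middle). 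Both arguments are sound. The paper's is slicker and avoids any case splitting or auxiliary geometric inequality — the integral representation does the convexity and the optimality in one shot — while yours is more elementary (no integral-remainder formula, only standard calculus), makes the equality case at $p = 1-q$ visible early, and explicitly isolates where the delicacy lies (the ordering $q \leq p_-$). One cosmetic remark: you assert $\varphi(q) > 2$ for $q \in (0,1/2)$ without proof; this is true and not hard (and the paper states it just before the theorem), but in fact you do not even need it, since if $\varphi(q) \leq 2$ then $F'' \geq 0$ everywhere and the tangent-line bound at $q$ immediately gives $F \geq 0$, exactly as in your treatment of $q = 1/2$.
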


The proof shows that $\varphi(q)$ is the optimal multiplicative factor in front of $(p-q)^2$
when the bounds needs to hold for all $p \in [0,1]$; the proof also provides a natural explanation
for the value of $\varphi$. \\

\begin{proof}
The stated inequality is satisfied for $q \in \{0,1\}$ as $\kl(p,q) = +\infty$ in these cases unless $p=q$. The special case $q=1/2$ is addressed at the end of the proof.
We thus fix $q \in (0,1)\setminus \{1/2\}$ and set $f(p) = \kl(p,q) / (p-q)^2$ for $p \ne q$, with a continuity extension at $p = q$. We exactly show that $f$ attains its
minimum at $p=1-q$, from which the result (and its optimality) follow by noting that
\[
f(1-q) = \frac{\kl(1-q,q)}{(1-2q)^2} = \frac{\ln\bigl((1-q)/q\bigr)}{1-2q} = \varphi(q)\,.
\]
Given the form of $f$, it is natural to perform a second-order Taylor expansion of $\kl(p,q)$
around $q$. We have
\begin{equation}
\label{eq:diffkl}
\frac{\partial}{\partial p}\kl(p,q) = \ln\!\left(\frac{p (1-q)}{(1-p) q}\right)
\qquad \mbox{and} \qquad
\frac{\partial^2}{\partial^2 p}\kl(p,q) = \frac{1}{p (1-p)}\defeq \psi(p)\,,
\end{equation}
so that Taylor's formula with integral remainder reveals that for $p \ne q$,
\[
f(p) = \frac{\kl(p,q)}{(p-q)^2} = \frac{1}{(p-q)^2} \int_q^p \frac{\psi(t)}{1!} (p-t)^1 \d t =
\int_0^1 \psi\bigl(q + u (p-q)\bigr) (1-u) \d u\,.
\]
This rewriting of $f$ shows that $f$ is strictly convex (as $\psi$ is so).
Its global minimum is achieved at the unique point where its derivative vanishes.
But by differentiating under the integral sign, we have, at $p=1-q$,
\[
f'(1-q) = \int_0^1 \psi'\bigl(q + u (1-2q)\bigr) \, u (1-u) \d u = 0\,;
\]
the equality to~$0$
follows from the fact that the function $u \mapsto \psi'\bigl(q + u (1-2q)\bigr) u (1-u)$ is antisymmetric around $u=1/2$ (essentially because $\psi'$ is antisymmetric itself around $1/2$).
As a consequence, the convex function $f$ attains its global minimum at $1-q$, which concludes the proof
for the case where $q \in (0,1)\setminus \{1/2\}$.

It only remains to deal with $q=1/2$: we use the continuity of $\kl(p,\,\cdot\,)$ and $\varphi$
to extend the obtained inequality from $q \in [0,1]\setminus \{1/2\}$ to $q = 1/2$.
\end{proof}

We now prove the second inequality of~\eqref{eq:bd3}.
A picture is helpful, see Figure~\ref{fig:varphi}.

\begin{figure}[t]
\begin{center}
\begin{tabular}{ccc}
\includegraphics[width=0.3\textwidth]{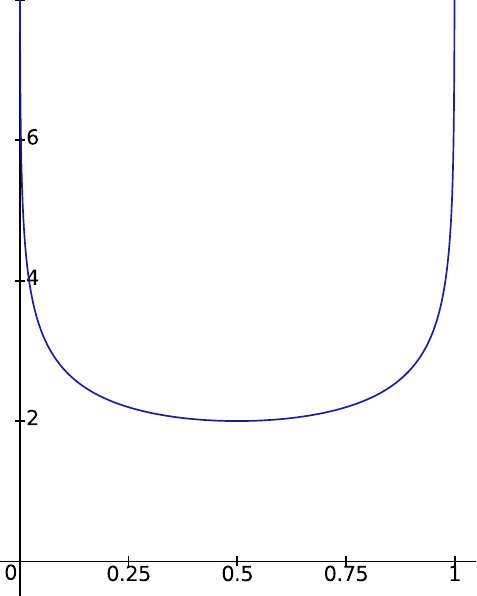} & \phantom{je pousse} &
\includegraphics[width=0.3\textwidth]{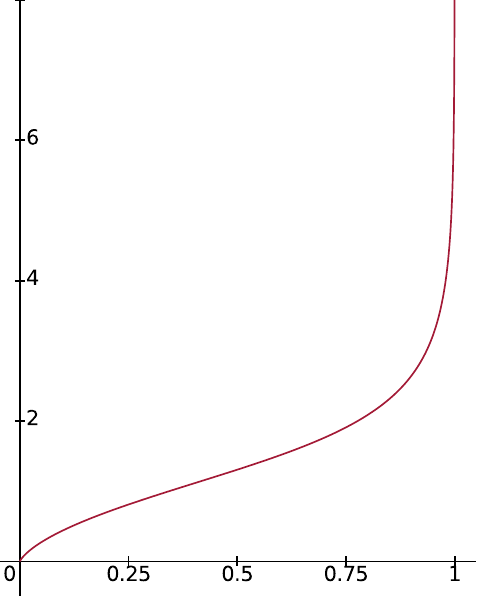}
\end{tabular}
\end{center}
\caption{\label{fig:varphi} Plots of $\varphi$ [left]
and $x \in (0,1) \mapsto \varphi(x)-\ln(1/x)$ [right].}
\end{figure}

\begin{corollary}
\label{cor:FanoPinskerFort}
For all $q \in (0,1]$, we have $\varphi(q) \geq 2$ and $\varphi(q) \geq \ln(1/q)$. Thus,
for all $p \in [0,1]$ and $q \in (0,1)$,
\[
p \leq q + \sqrt{\frac{\kl(p,q)}{\max\bigl\{\ln(1/q),\,2\bigr\}}}\,.
\]
\end{corollary}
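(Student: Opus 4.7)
The plan is to establish the two claimed lower bounds on $\varphi$ separately, and then to plug them into Theorem~\ref{th:Pinskerrefined} to deduce the displayed inequality on $p$.

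For the lower bound $\varphi(q) \geq 2$, my plan is to re-use the integral representation that already appeared in the proof of Theorem~\ref{th:Pinskerrefined}. Indeed, that proof showed that $f(1-q) = \varphi(q)$, where
\[
f(1-q) = \bigintsss_0^1 \psi\bigl(q + u(1-2q)\bigr)\,(1-u)\,\d u
\qquad \text{with } \psi(x) = \frac{1}{x(1-x)}.
\]
Since $x(1-x) \leq 1/4$ on $[0,1]$, one has $\psi \geq 4$ pointwise, so $\varphi(q) \geq 4 \int_0^1 (1-u)\,\d u = 2$. This handles $q \in (0,1)$; the case $q=1$ follows from the continuity extension $\varphi(1) = +\infty$.

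For the lower bound $\varphi(q) \geq \ln(1/q)$, which is the more delicate part and the step I expect to be the main obstacle, my plan is to unfold the definition and show that, after clearing the denominator $1-2q$, the claim is equivalent to
\[
h(q) \defeq \ln(1-q) - 2q\ln(q) \, \geq 0 \text{ on } (0,1/2] \quad \text{and} \quad h(q) \leq 0 \text{ on } [1/2,1),
\]
so that $\varphi(q) - \ln(1/q) = h(q)/(1-2q) \geq 0$ on $(0,1)\setminus\{1/2\}$. To establish the sign pattern of $h$, I would compute
\[
h''(q) = -\frac{1}{(1-q)^2} - \frac{2}{q} < 0 \qquad \text{on } (0,1),
\]
so that $h$ is strictly concave. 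Combined with the boundary values $h(0^+) = 0$ (using $q\ln q \to 0$) and $h(1/2) = \ln(1/2) - \ln(1/2) = 0$, strict concavity forces $h > 0$ on $(0,1/2)$. The derivative computation $h'(1/2) = -4 + 2\ln 2 < 0$ then shows, again by concavity (hence monotonicity of $h'$), that $h' < 0$ on $(1/2,1)$, so $h$ is strictly decreasing there and $h < 0$. The endpoint $q=1/2$ is handled by continuity ($\varphi(1/2)=2 \geq \ln 2$), and $q=1$ is trivial.

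Finally, once both lower bounds on $\varphi$ are established, the inequality on $p$ follows from Theorem~\ref{th:Pinskerrefined}: for $p \in [0,1]$ and $q \in (0,1)$,
\[
\kl(p,q) \geq \varphi(q)\,(p-q)^2 \geq \max\bigl\{\ln(1/q),\,2\bigr\}\,(p-q)^2,
\]
from which extracting $p - q$ and using $p - q \leq |p-q|$ yields the stated bound. No case distinction on the sign of $p - q$ is needed beyond noting that if $p \leq q$ the conclusion is trivial, and if $p > q$ one takes square roots on both sides.
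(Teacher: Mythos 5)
Your proof is correct, and it actually fills a gap in the paper: for Corollary~\ref{cor:FanoPinskerFort} the paper offers no analytical argument, merely the sentence ``A picture is helpful, see Figure~\ref{fig:varphi},'' and the earlier remark that $\min\varphi = 2$ is stated without proof before Theorem~\ref{th:Pinskerrefined}. You supply the missing details in a clean way.

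Both of your steps check out. For $\varphi(q)\geq 2$ on $(0,1)$, re-using the integral representation $\varphi(q)=f(1-q)=\int_0^1 \psi\bigl(q+u(1-2q)\bigr)(1-u)\,\d u$ with $\psi\geq 4$ on $(0,1)$ gives exactly $\varphi(q)\geq 4\cdot\tfrac12 = 2$; the point $q+u(1-2q)$ sweeps the segment from $q$ to $1-q$, which stays inside $(0,1)$, so the pointwise bound applies. (An equivalent alternative is the substitution $t=1-2q$, which turns $\varphi$ into $2\,\mathrm{artanh}(t)/t\geq 2$, but your integral route keeps the proof self-contained within the paper's computations.) For $\varphi(q)\geq\ln(1/q)$, your reduction to the sign of $h(q)=\ln(1-q)-2q\ln q$ is algebraically correct, the computation $h''(q)=-(1-q)^{-2}-2/q<0$ is right, and strict concavity plus the boundary values $h(0^+)=h(1/2)=0$ and $h'(1/2)=-4+2\ln 2<0$ do force the required sign pattern on $(0,1/2)$ and $(1/2,1)$; the removable singularity at $q=1/2$ and the endpoint $q=1$ are handled correctly. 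The final passage from $\kl(p,q)\geq\max\{\ln(1/q),2\}(p-q)^2$ to the bound on $p$ is immediate. One tiny stylistic remark: you do not actually need the case distinction on the sign of $p-q$ that you mention at the end --- $(p-q)^2\leq X$ directly gives $p-q\leq|p-q|\leq\sqrt{X}$ in all cases --- so that sentence can be dropped.
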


Slightly sharper bounds are possible, like $\varphi(q) \geq (1+q)(1+q^2)\ln(1/q)$
or $\varphi(q) \geq \ln(1/q) + 2.5\,q$, but we were unable to exploit these
refinements in our applications.

\paragraph{General refined Pinsker's inequality.}
The following result, which improves on Pinsker's inequality, is due to \citet{OrWe-05-PinskerDistributionDependent}. Our approach through Bernoulli distributions enables to derive it in an elementary (and enlightening) way: by combining Theorem~\ref{th:Pinskerrefined} and the data-processing inequality (Lemma~\ref{lem:dataProcessingIneq--fdiv}).

\begin{theorem}
Let $\P$ and $\Q$ be two probability distributions on the same measurable space $(\Omega,\cF)$.
Then
\[
\forall \, A \in \cF, \qquad \bigl| \P(A) - \Q(A) \bigr|
\leq \sqrt{\frac{\KL(\P,\Q)}{\varphi\bigl(\Q(A)\bigr)}}\,,
\]
where $\varphi \geq 2$ is defined in the statement of Theorem~\ref{th:Pinskerrefined}.
In particular, the total variation distance between $\P$ and $\Q$
is bounded as
\[
\sup_{A \in \cF} \, \bigl| \P(A) - \Q(A) \bigr|
\leq \sqrt{\frac{\KL(\P,\Q)}{\displaystyle{\inf_{A \in \cF} \, \varphi\bigl(\Q(A)\bigr)}}}\,.
\]
\end{theorem}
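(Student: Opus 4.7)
The plan is to combine the refined Pinsker inequality for Bernoulli distributions (Theorem~\ref{th:Pinskerrefined}) with the data-processing inequality (Lemma~\ref{lem:dataProcessingIneq--fdiv}), exactly in the spirit sketched by the authors just before the statement. The idea is that any event $A$ gives rise to pushforward Bernoulli distributions under the indicator random variable $X = \indicator{A}$, and the Bernoulli refined Pinsker inequality already contains all the sharpness we need.

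First I would fix $A \in \cF$ and write $p = \P(A)$, $q = \Q(A)$. Applying Lemma~\ref{lem:dataProcessingIneq--fdiv} with $X = \indicator{A}$ (so that $\P^X = \Ber(p)$ and $\Q^X = \Ber(q)$) immediately yields
\[
\kl(p,q) = \KL\!\bigl(\Ber(p),\Ber(q)\bigr) \leq \KL(\P,\Q).
\]
Next I invoke Theorem~\ref{th:Pinskerrefined} to lower bound the left-hand side by $\varphi(q)\,(p-q)^2$. Chaining the two inequalities and taking square roots gives
\[
\bigl|\P(A) - \Q(A)\bigr| = |p-q| \leq \sqrt{\frac{\KL(\P,\Q)}{\varphi\bigl(\Q(A)\bigr)}}\,,
\]
which is the first claimed bound. (Note that the case $\varphi(\Q(A)) = +\infty$, i.e.\ $\Q(A) \in \{0,1\}$, is immediate since then $\P(A) = \Q(A)$ whenever $\KL(\P,\Q) < +\infty$, and the inequality is trivial otherwise.)

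For the total variation statement, I would simply take the supremum over $A \in \cF$ of both sides. Since $\KL(\P,\Q)$ does not depend on $A$, the right-hand side is maximized by minimizing the denominator, giving
\[
\sup_{A \in \cF} \bigl|\P(A) - \Q(A)\bigr| \leq \sqrt{\frac{\KL(\P,\Q)}{\displaystyle \inf_{A \in \cF} \varphi\bigl(\Q(A)\bigr)}}\,.
\]

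There is essentially no obstacle here: the whole argument is a two-line reduction, and the only mild subtlety is the degenerate case $\Q(A) \in \{0,1\}$, which is handled by the continuity extension $\varphi(0) = \varphi(1) = +\infty$ given in Theorem~\ref{th:Pinskerrefined}. The conceptual point worth emphasizing is that this is a genuine instance of the two-step methodology of Section~\ref{sec:prooftech}: reduction to Bernoulli via data-processing, followed by a sharp lower bound on $\kl$ on the Bernoulli side.
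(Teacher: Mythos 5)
Your proof is correct and coincides exactly with the paper's own argument: the paper proves this result by the one-line observation that it follows from combining Theorem~\ref{th:Pinskerrefined} with the data-processing inequality (Lemma~\ref{lem:dataProcessingIneq--fdiv}) applied to $X=\indicator{A}$, which is precisely what you spell out, and your handling of the degenerate case $\Q(A)\in\{0,1\}$ is a fine extra remark.
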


\subsection{An improved Bretagnolle-Huber inequality}
\label{sec:proofLBKL-BH}

The Bretagnolle-Huber inequality was introduced by \citet{BrHu-78-RisqueMinimax,BrHu-79-RisqueMinimax}. The multiplicative factor $\e^{-1/\e} \geq 0.69$ in our statement~\eqref{eq:bd4} below is a slight improvement over the original $1/2$ factor. For all $p,q \in [0,1]$,
\begin{equation}
\label{eq:bd4}
1 -|p-q| \geq \e^{-1/\e} \, \e^{-\kl(p,q)}\,,
\qquad \mbox{thus} \qquad
q \geq p - 1 + \e^{-1/\e} \, \e^{-\kl(p,q)}\,.
\end{equation}
It is worth to note that
\citet{BrHu-78-RisqueMinimax} also proved the inequality
\[
|p-q| \leq \sqrt{1-\exp\bigl(-\kl(p,q)\bigr)}\,,
\]
which improves as well upon the Bretagnolle-Huber inequality with the $1/2$ factor, but which is neither better nor worse than~\eqref{eq:bd4}. \smallskip

Now, via the data-processing inequality (Lemma~\ref{lem:dataProcessingIneq--fdiv}),
we get from~\eqref{eq:bd4}
\[
1 - \sup_{A \in \cF} \, \bigl| \P(A) - \Q(A) \bigr| \geq \e^{-1/\e} \, \e^{-\KL(\P,\Q)}\,.
\]
The left-hand side can be rewritten as $\inf_{A \in \cF} \bigl\{\P(A) + \Q(A^c)\bigr\}$, where $A^c$ denotes the complement of~$A$. Therefore, the above inequality is a lower bound on the test affinity between $\P$ and~$\Q$.
For the sake of comparison to~\eqref{eq:Fano1}, we can restate the general
version of the Bretagnolle-Huber inequality as:
for all $A \in \cF$,
\begin{equation}
\label{eq:bd4bis}
\Q(A) \geq \P(A) - 1 + \e^{-1/\e} \, \e^{-\KL(\P,\Q)}\,.
\end{equation}

We now provide a proof of~\eqref{eq:bd4}; note that our improvement was made possible because we reduced the proof to very elementary arguments in the case of Bernoulli distributions. \\

\begin{proof}
The case where $p \in \{0,1\}$ or $q \in \{0,1\}$ can be handled separately; we consider $(p,q)  \in (0,1)^2$ in the sequel.
The derivative of the function $x \in (0,1) \mapsto x \ln \bigl(x/(1-q)\bigr)$ equals $1+\ln(x)-\ln(1-q)$, so that the function achieves its minimum at $x = (1-q)/\e$, with value $-(1-q)/\e \geq -1/\e$. Therefore,
\[
- \kl(p,q)
= - p \ln \!\left( \frac{p}{q} \right) - (1-p) \ln \!\left( \frac{1-p}{1-q} \right)
\leq - p \ln \!\left( \frac{p}{q} \right) + \frac{1}{\e}
= p \Biggl( \ln \!\left( \frac{q}{p} \right) + \frac{1}{\e} \Biggr)
+ (1-p) \frac{1}{\e}\,.
\]
Therefore, using the convexity of the exponential,
\[
\e^{-\kl(p,q)} \leq p \, \exp\Biggl( \ln \!\left( \frac{q}{p} \right) + \frac{1}{\e} \Biggr)
+ (1-p) \, \e^{1/\e} = \bigl(q+(1-p)\bigr) \, \e^{1/\e}\,,
\]
which shows that
\[
1 - (p-q) \geq \e^{-1/\e} \, \e^{-\kl(p,q)}\,.
\]
By replacing $q$ by $1-q$ and $p$ by $1-p$, we also get
\[
1 - (q-p) = 1 - \bigl( (1-p) - (1-q) \bigr) \geq \e^{-1/\e} \, \e^{-\kl(1-p,1-q)} = \e^{-1/\e} \, \e^{-\kl(p,q)}\,.
\]
This concludes the proof, as $1 - |p-q|$ is equal to the smallest value
between $1 - (p-q)$ and $1 - (q-p)$.
\end{proof}

\newpage
\bibliographystyle{plainnat}
\bibliography{referencesFano}

\newpage
\appendix

\section{On the sharpness of Fano-type inequalities of Section~\ref{sec:prooftech}}
\label{sec:sharpness}

The reductions of Section~\ref{sec:avg} are sharp in the sense that they can hold with equality
(they cannot be improved at this level of generality).

For the Kullback-Leibler divergence, they lead to inequalities of the form
$\kl\bigl(\ol{p},\ol{q}\bigr) \leq \ol{K}$. We are interested in upper bounds on $\ol{p}$.
We introduce the generalized inverse of $\kl$ in its second argument: for all $q \in [0,1]$
and all $y \geq 0$,
\[
\kl(\,\cdot\,,q)^{(-1)}(y) \defeq \sup\bigl\{p \in [0,1]: \ \kl(p,q) \leq y\bigr\}\,;
\]
when $q \in (0,1)$,
it is thus equal to the largest root $q$ of the equation $\kl(p,q) = y$ if $y \leq \ln(1/q)$ or to $1$ otherwise.
From $\kl\bigl(\ol{p},\ol{q}\bigr) \leq \ol{K}$ the best general upper bound on $\ol{p}$ is
\[
\ol{p} \leq \kl\bigl(\,\cdot\,,\ol{q}\bigr)^{(-1)}\bigl( \ol{K} \bigr)\,.
\]
This formulation should be reminiscent of \citet[Theorem~2]{Birge05Fano},
but has one major practical drawback: it is unreadable, and this is why we considered
the lower bounds of Section~\ref{sec:LBdiv}.

Question is now how sharp these lower bounds on $\kl$ are.
Bounds~\eqref{eq:bd1} and~\eqref{eq:bd2} are of the form
\[
p \leq \frac{\kl(p,q)}{\ln(1/q)} + \varepsilon(q) \,,
\]
where the $\varepsilon(q)$ quantity vanishes when $q \to 0$.
Now, in the applications, $q$ is typically small and the main term $\kl(p,q)/\ln(1/q)$ is of the order of a constant.
Therefore, the lemma below explains that up to the $\varepsilon$ quantity, the bounds~\eqref{eq:bd1} and~\eqref{eq:bd2}
of Section~\ref{sec:LBdiv} are essentially optimal.

The bound~\eqref{eq:bd3} therein is of the form
\[
p \leq \sqrt{\frac{\kl(p,q)}{\ln(1/q)}} + \varepsilon(q) \,,
\]
but given the discussion above, it can also be considered optimal in spirit, as in the applications
$q$ is typically small and the main term $\kl(p,q)/\ln(1/q)$ is of the order of a constant.

\begin{lemma}
For all $q \in (0,1)$ and $p \in [0,1]$,
whenever $p \geq q$, we have
\[
\kl(p,q) \leq p \ln \!\left(\frac{1}{q}\right)
\qquad \mbox{thus} \qquad
p \geq \frac{\kl(p,q)}{\ln(1/q)}\,.
\]
\end{lemma}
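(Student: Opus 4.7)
The plan is to unfold the definition of $\kl(p,q)$, rearrange the desired inequality into a form that only involves the binary entropy $-\bigl(p\ln p + (1-p)\ln(1-p)\bigr)$ on the left and a term involving $\ln(1-q)$ on the right, and then verify each piece using only the elementary facts that $p\ln p \leq 0$ on $[0,1]$ and that $\ln$ is increasing.

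More concretely, first I would write
\[
\kl(p,q) = p\ln p - p\ln q + (1-p)\ln(1-p) - (1-p)\ln(1-q),
\]
and observe that $\kl(p,q) \leq p\ln(1/q) = -p\ln q$ is equivalent to
\[
p\ln p + (1-p)\ln(1-p) \leq (1-p)\ln(1-q).
\]
This is the target inequality in its cleanest form.

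Next I would bound the two summands on the left separately. For the first, $p\ln p \leq 0$ holds for all $p \in [0,1]$ (with the convention $0\ln 0 = 0$), so it contributes nothing adverse. For the second, the hypothesis $p \geq q$ gives $1-p \leq 1-q$, and since both lie in $(0,1]$ (one needs a quick aside when $p=1$, where both sides vanish) monotonicity of $\ln$ yields $\ln(1-p) \leq \ln(1-q)$. Multiplying by the nonnegative quantity $1-p$ then gives $(1-p)\ln(1-p) \leq (1-p)\ln(1-q)$. Adding the two bounds finishes the proof.

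I expect no real obstacle here: the argument is an elementary rearrangement followed by the monotonicity of $\ln$. The only mild care needed is in handling the boundary cases $p=0$, $p=1$, and the convention $0\ln 0 = 0$ recalled in Section~\ref{sec:divf}; these are all immediate once one notices that the inequality degenerates (both sides are $0$ at $p=0$, and the $(1-p)$--terms vanish at $p=1$).
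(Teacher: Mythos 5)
Your proof is correct and follows essentially the same route as the paper's: both isolate the $p\ln(1/q)$ term and then observe that the remaining contribution $p\ln p + (1-p)\ln\bigl((1-p)/(1-q)\bigr)$ is nonpositive, using $p\ln p \leq 0$ and the monotonicity of $\ln$ together with $p \geq q$. Your handling of the boundary cases $p=0$, $p=1$ via the convention $0\ln 0 = 0$ is a harmless extra precaution that the paper leaves implicit.
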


\begin{proof}
We note that
when $p \geq q$, we have $(1-p)/(1-q) \leq 1$, so that
\[
\kl(p,q) = p \ln \!\left(\frac{1}{q}\right)
+ \underbrace{p \ln(p)}_{\leq 0}
+ (1-p) \underbrace{\ln\!\left(\frac{1-p}{1-q}\right)}_{\leq 0}
\leq p \ln \!\left(\frac{1}{q}\right),
\]
hence the first inequality.
\end{proof}

\newpage
\section{From Bayesian posteriors to point estimators}
\label{sec:bayesianToPoint}

We recall below a well-known result that indicates how to construct good point estimators from good Bayesian posteriors (Section~\ref{sec:conversion} below). One theoretical benefit is that this result can be used to convert known minimax lower bounds for point estimation into minimax lower bounds for posterior concentration rates (Section~\ref{sec:conversion-appli} below). This technique is thus a---less direct---alternative to the method we presented in Section~\ref{sec:bayesianPosteriorRates}.

\subsection{The conversion}
\label{sec:conversion}
The following statement is a nonasymptotic variant of Theorem~2.5 by \citet{GhosalETAL-00-ConvergenceRatesPosterior} (see also Chapter~12, Proposition~3 by \citealp{LeCam-86-Asymptotic_Methods_in_Statistical_Decision_Theory}, as well as Section~5.1 by \citealp{HoRoSH-15-AdaptivePosteriorConcentrationRates}). We consider the same setting as in Section~\ref{sec:bayesianPosteriorRates} and assume in particular that the underlying probability measure
is given by $\P_\theta = P_\theta^{\otimes n}$, that is, that $(X_1,\ldots,X_n)$ is the identity random variable.

\begin{proposition}[From Bayesian posteriors to point estimators]
\label{prop:lowerbound-posterior-conversion}
\ \\
Let $n \geq 1$, $\delta > 0$, and $\theta \in \Theta$. Let $\hat{\theta}_n=\hat{\theta}_n(X_1,\ldots,X_n)$ be any estimator satisfying, $\P_{\theta}$--almost surely,
\begin{equation}
\P_{\pi}\Bigl(\theta': \ell(\theta',\hat{\theta}_n) < \epsilon_n \, \big| \, X_{1:n} \Bigr) \geq \sup_{\tilde{\theta} \in \Theta} \P_{\pi}\Bigl(\theta': \ell\bigl(\theta',\tilde{\theta} \,\bigr) < \epsilon_n \, \big| \, X_{1:n} \Bigr) - \delta \,.
\label{eq:posterior-conversion-defthetahat}
\end{equation}
Then,
\begin{equation}
\P_{\theta}\biggl(\P_{\pi}\bigl(\theta': \ell(\theta',\theta) \geq \epsilon_n \, \big| \, X_{1:n} \bigr) \geq \frac{1-\delta}{2}\biggr) \geq \P_{\theta}\Bigl(\ell\bigl(\hat{\theta}_n,\theta\bigr) \geq 2 \epsilon_n \Bigr)~.
\label{eq:posterior-conversion-conclusion}
\end{equation}
\end{proposition}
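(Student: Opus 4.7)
The plan is to work pathwise on the underlying probability space: I will show that on the event $\bigl\{\ell(\hat\theta_n,\theta)\geq 2\epsilon_n\bigr\}$, the posterior inequality inside the left-hand side of~\eqref{eq:posterior-conversion-conclusion} automatically holds. Once this pathwise implication is established, taking $\P_\theta$-probabilities of both events yields~\eqref{eq:posterior-conversion-conclusion} immediately.

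The key observation is a triangle-inequality argument. Since $\ell$ is a pseudo-metric, whenever $\ell(\hat\theta_n,\theta)\geq 2\epsilon_n$ the two open balls $\bigl\{\theta':\ell(\theta',\hat\theta_n)<\epsilon_n\bigr\}$ and $\bigl\{\theta':\ell(\theta',\theta)<\epsilon_n\bigr\}$ are disjoint: indeed, any common point $\theta'$ would force $\ell(\hat\theta_n,\theta)<2\epsilon_n$ via the triangle inequality. Hence, on that event and $\P_\theta$-almost surely,
\[
\P_\pi\bigl(\theta':\ell(\theta',\hat\theta_n)<\epsilon_n\,\big|\,X_{1:n}\bigr)
+\P_\pi\bigl(\theta':\ell(\theta',\theta)<\epsilon_n\,\big|\,X_{1:n}\bigr)\leq 1.
\]

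I would then invoke the near-optimality assumption~\eqref{eq:posterior-conversion-defthetahat}: since $\theta$ is in particular a valid choice for $\tilde\theta$, we have $\P_\pi\bigl(\ell(\theta',\hat\theta_n)<\epsilon_n\,\big|\,X_{1:n}\bigr)\geq \P_\pi\bigl(\ell(\theta',\theta)<\epsilon_n\,\big|\,X_{1:n}\bigr)-\delta$, $\P_\theta$-almost surely. Substituting into the previous inequality yields
\[
2\,\P_\pi\bigl(\ell(\theta',\theta)<\epsilon_n\,\big|\,X_{1:n}\bigr)-\delta\leq 1,
\]
i.e., $\P_\pi\bigl(\ell(\theta',\theta)\geq\epsilon_n\,\big|\,X_{1:n}\bigr)\geq (1-\delta)/2$, which is precisely the condition appearing inside the left-hand side of~\eqref{eq:posterior-conversion-conclusion}.

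Putting the two steps together shows the pathwise inclusion (up to a $\P_\theta$-null set)
\[
\bigl\{\ell(\hat\theta_n,\theta)\geq 2\epsilon_n\bigr\}\subseteq\Bigl\{\P_\pi\bigl(\theta':\ell(\theta',\theta)\geq\epsilon_n\,\big|\,X_{1:n}\bigr)\geq (1-\delta)/2\Bigr\},
\]
and taking $\P_\theta$-measures concludes the proof. I expect no serious obstacle: the triangle inequality step is elementary, and the only subtlety is to remember that the near-optimality hypothesis~\eqref{eq:posterior-conversion-defthetahat} is phrased $\P_\theta$-almost surely, so the set inclusion above is valid outside a $\P_\theta$-null set, which is harmless when passing to $\P_\theta$-probabilities.
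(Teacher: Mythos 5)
Your proof is correct and follows essentially the same approach as the paper's: the triangle-inequality disjointness of the two $\epsilon_n$-balls, the near-optimality hypothesis applied with $\tilde\theta=\theta$, and the resulting pathwise inclusion of events, after which one takes $\P_\theta$-probabilities. The paper writes the argument as a chain of event inclusions rather than substituting inequality by inequality, but the content is identical.
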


\noindent
This result implies that if $\hat{\theta}_n$ is a center of a ball that almost maximizes the posterior mass---see  assumption~\eqref{eq:posterior-conversion-defthetahat}---and if the posterior mass concentrates around $\theta$ at a rate $\epsilon'_n < \epsilon_n$---so that the left-hand side of~\eqref{eq:posterior-conversion-conclusion} vanishes by Markov's inequality---then $\hat{\theta}_n$ is $(2\epsilon_n)$--close to $\theta$ with high probability.
Therefore, at least from a theoretical viewpoint, a good posterior distribution can be converted into a good point estimator,
by defining $\hat{\theta}_n$ based on $\P_{\pi}(\,\cdot\,\,|\,X_{1:n})$ such that~\eqref{eq:posterior-conversion-defthetahat} holds, i.e., by taking an approximate argument of the supremum.
A measurable such $\hat{\theta}_n$ exists as soon as $\Theta$ is a separable topological space and the function $\tilde{\theta} \mapsto \P_{\pi}\bigl(\theta': \ell(\theta',\tilde{\theta} \,) < \epsilon_n \, \big| \, x_{1:n} \bigr)$ is lower-semicontinuous for $\mathfrak{m}^{\otimes n}$-almost every $x_{1:n} \in \cX^n$ (see the end of the proof of Corollary~\ref{cor:minimax2BayesLB} for more details).
\bigskip

\begin{proof}
Denote by $B_{\ell}(\theta,\epsilon) \defeq \{\theta' \in \Theta: \, \ell(\theta',\theta) < \epsilon\}$ the open $\ell$--ball of center $\theta$ and radius~$\epsilon$. By the triangle inequality we have the following inclusions of events:
\begin{align}
\Bigl\{\ell\bigl(\hat{\theta}_n,\theta\bigr) \geq 2 \epsilon_n \Bigr\} & \subseteq \Bigl\{ B_{\ell}\bigl(\hat{\theta}_n,\epsilon_n\bigr) \cap B_{\ell}(\theta,\epsilon_n) = \varnothing \Bigr\} \nonumber \\
& \subseteq \Bigl\{\P_{\pi}\bigl(B_{\ell}\bigl(\hat{\theta}_n,\epsilon_n\bigr) \, \big| \, X_{1:n} \bigr) + \P_{\pi}\bigl(B_{\ell}(\theta,\epsilon_n) \, \big| \, X_{1:n} \bigr) \leq 1 \Bigr\} \nonumber \\
& \subseteq \biggl\{\P_{\pi}\bigl(B_{\ell}(\theta,\epsilon_n) \, \big| \, X_{1:n} \bigr) \leq \frac{1+\delta}{2} \biggr\} \label{eq:posterior-conversion-1} \\
& = \biggl\{1-\P_{\pi}\bigl(\theta': \ell(\theta',\theta)<\epsilon_n \, \big| \, X_{1:n} \bigr) \geq \frac{1-\delta}{2} \biggr\} \\
& = \biggl\{\P_{\pi}\bigl(\theta': \ell(\theta',\theta) \geq \epsilon_n \, \big| \, X_{1:n} \bigr) \geq \frac{1-\delta}{2} \biggr\} \,, \nonumber
\end{align}
where \eqref{eq:posterior-conversion-1} follows from the lower bound $\P_{\pi}\bigl(B_{\ell}\bigl(\hat{\theta}_n,\epsilon_n\bigr) \, \big| \, X_{1:n} \bigr) \geq \P_{\pi}\bigl(B_{\ell}(\theta,\epsilon_n) \, \big| \, X_{1:n} \bigr) - \delta$, which holds by assumption~\eqref{eq:posterior-conversion-defthetahat} on~$\hat{\theta}_n$. This concludes the proof.
\end{proof}

\subsection{Application to posterior concentration lower bounds}
\label{sec:conversion-appli}

We explained above that a good posterior distribution can be converted into a good point estimator. As noted by \citet{GhosalETAL-00-ConvergenceRatesPosterior} this conversion can be used the other way around:
if we have a lower bound on the minimax rate of estimation, then Proposition~\ref{prop:lowerbound-posterior-conversion} provides a lower bound on the minimax posterior concentration rate, as formalized in the following corollary. Assumption~\eqref{eq:inprob-lowerbound} below corresponds to an in-probability minimax lower bound.

\begin{corollary}
\label{cor:minimax2BayesLB}
Let $n \geq 1$. Consider the setting of Section~\ref{sec:bayesianPosteriorRates}, with underlying probability measure~$\P_\theta = P_\theta^{\otimes n}$ when the unknown parameter is $\theta$. Assume that $\Theta$ is a separable topological space and that $\tilde{\theta} \mapsto \ell\bigl(\theta',\tilde{\theta} \,\bigr)$ is continuous for all $\theta' \in \Theta$.
Assume also that for some absolute constant $c < 1$, we have
\begin{equation}
\inf_{\hat{\theta}_n \,\, \mbox{\tiny \rm est.}} \,\, \sup_{\theta \in \Theta} \, \P_{\theta}\Bigl(\ell\bigl(\hat{\theta}_n,\theta\bigr) \geq 2 \epsilon_n \Bigr) \geq 1-c \,,
\label{eq:inprob-lowerbound}
\end{equation}
where the infimum is taken over all estimators $\hat{\theta}_n$.
Then, for all priors $\pi'$ on $\Theta$,
\begin{equation}
\inf_{\theta \in \Theta} \, \E_{\theta}\Big[\P_{\pi'}\bigl(\theta': \ell(\theta',\theta) < \epsilon_n \, \big| \, X_{1:n} \bigr) \Bigr] \leq \frac{1+c}{2} < 1 \,.
\label{eq:minimax2BayesLB-result}
\end{equation}
\end{corollary}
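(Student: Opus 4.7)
The plan is to contradict the minimax lower bound~\eqref{eq:inprob-lowerbound} by converting the posterior of $\pi'$ into a point estimator via Proposition~\ref{prop:lowerbound-posterior-conversion}, and then to pass from an in-probability bound to the desired in-expectation bound using Markov's inequality. Fix a prior $\pi'$ on $\Theta$ and an auxiliary parameter $\delta \in (0,1)$ that will be sent to $0$ at the end.

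The first step is to build a $\sigma(X_{1:n})$--measurable estimator $\hat{\theta}_n^{(\delta)}$ satisfying the approximate-maximization requirement~\eqref{eq:posterior-conversion-defthetahat}. For every fixed $\theta' \in \Theta$, the set $\{\tilde\theta: \ell(\theta',\tilde\theta) < \epsilon_n\}$ is open by continuity of $\ell(\theta',\,\cdot\,)$, so $\tilde\theta \mapsto \indicator{\{\ell(\theta',\tilde\theta) < \epsilon_n\}}$ is lower semicontinuous. Fatou's lemma, applied inside the posterior integral, shows that
\[
\phi(\tilde\theta; x_{1:n}) \defeq \P_{\pi'}\bigl(\theta': \ell(\theta',\tilde\theta) < \epsilon_n \,\big|\, x_{1:n}\bigr)
\]
is lower semicontinuous in $\tilde\theta$ for each $x_{1:n} \in \cX^n$. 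By separability of $\Theta$, pick a countable dense sequence $(\theta_k)_{k \geq 1}$; lower semicontinuity then ensures $\sup_{\tilde\theta \in \Theta} \phi(\tilde\theta;x_{1:n}) = \sup_{k \geq 1} \phi(\theta_k;x_{1:n})$, since every $\tilde\theta$ is the limit of some subsequence $(\theta_{k_n})$ along which $\phi$'s liminf dominates $\phi(\tilde\theta)$. Define $K(x_{1:n})$ as the smallest $k$ with $\phi(\theta_k;x_{1:n}) \geq \sup_{j} \phi(\theta_j;x_{1:n}) - \delta$ and set $\hat\theta_n^{(\delta)}(x_{1:n}) = \theta_{K(x_{1:n})}$. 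This $K$ is measurable as an infimum of indices defined by measurable inequalities between measurable functions of $x_{1:n}$, and by construction $\hat\theta_n^{(\delta)}$ satisfies~\eqref{eq:posterior-conversion-defthetahat} everywhere.

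With this estimator, Proposition~\ref{prop:lowerbound-posterior-conversion} gives, for every $\theta \in \Theta$,
\[
\P_\theta\!\left(\P_{\pi'}\bigl(\theta': \ell(\theta',\theta) \geq \epsilon_n \,\big|\, X_{1:n}\bigr) \geq \tfrac{1-\delta}{2}\right) \geq \P_\theta\bigl(\ell(\hat\theta_n^{(\delta)},\theta) \geq 2\epsilon_n\bigr).
\]
Taking the supremum over $\theta$ and invoking the assumed minimax lower bound~\eqref{eq:inprob-lowerbound} applied to $\hat\theta_n^{(\delta)}$, the right-hand side is at least $1-c$. Markov's inequality applied to the $[0,1]$--valued random variable $\P_{\pi'}(\theta': \ell(\theta',\theta) \geq \epsilon_n | X_{1:n})$ then upgrades this to
\[
\sup_{\theta \in \Theta} \E_\theta\!\left[\P_{\pi'}\bigl(\theta': \ell(\theta',\theta) \geq \epsilon_n \,\big|\, X_{1:n}\bigr)\right] \geq \tfrac{(1-\delta)(1-c)}{2},
\]
or equivalently $\inf_{\theta \in \Theta} \E_\theta[\P_{\pi'}(\theta': \ell(\theta',\theta) < \epsilon_n | X_{1:n})] \leq 1 - (1-\delta)(1-c)/2$. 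Letting $\delta \downarrow 0$ yields the claimed bound $(1+c)/2$.

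The main obstacle is the measurable-selection step: one must verify both that a countable-supremum construction captures the full supremum over $\Theta$ (not merely over a dense set) and that the resulting choice is $\sigma(X_{1:n})$--measurable. Both points follow from the lower-semicontinuity of $\phi(\,\cdot\,;x_{1:n})$ combined with separability of $\Theta$, which is precisely where the two structural assumptions of the corollary are used. Once the estimator is in hand, the remainder of the argument is a mechanical chain: Proposition~\ref{prop:lowerbound-posterior-conversion}, the assumed minimax bound, Markov's inequality, complementation, and the limit $\delta \to 0$.
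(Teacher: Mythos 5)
Your proposal is correct and follows essentially the same route as the paper: fix $\delta>0$, construct an approximate maximizer $\hat\theta_n^{(\delta)}$ via the countable dense subset of $\Theta$ and lower semicontinuity of $\tilde\theta\mapsto\P_{\pi'}\bigl(\theta':\ell(\theta',\tilde\theta)<\epsilon_n\,\big|\,x_{1:n}\bigr)$ (itself obtained from Fatou's lemma and the continuity of $\ell(\theta',\cdot)$), apply Proposition~\ref{prop:lowerbound-posterior-conversion}, invoke the minimax assumption~\eqref{eq:inprob-lowerbound}, upgrade to expectation via Markov's inequality, and let $\delta\downarrow 0$. The only cosmetic difference is that you argue $\sup_{\tilde\theta}\phi=\sup_k\phi(\theta_k)$ through a sequential liminf argument; the cleaner justification (equivalent in the first-countable setting both you and the paper implicitly work in) is that for lower semicontinuous $\phi$, every superlevel set $\{\phi>a\}$ is open and therefore meets the dense sequence whenever nonempty.
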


\begin{proof}
Let $\delta > 0$ be a parameter that we will later take arbitrarily small. Fix any estimator $\hat{\theta}_n$ satisfying~\eqref{eq:posterior-conversion-defthetahat} for the prior $\pi'$, i.e., that almost maximizes the posterior mass on an open ball of radius~$\epsilon_n$. (See the end of the proof for details on why such a measurable $\hat{\theta}_n$ exists.)
Then, Proposition~\ref{prop:lowerbound-posterior-conversion} used for all $\theta \in \Theta$ entails that
\[
\sup_{\theta \in \Theta} \, \P_{\theta}\biggl(\P_{\pi'}\bigl(\theta': \ell(\theta',\theta) \geq \epsilon_n \, \big| \, X_{1:n} \bigr) \geq \frac{1-\delta}{2}\biggr) \geq \sup_{\theta \in \Theta} \, \P_{\theta}\Bigl(\ell\bigl(\hat{\theta}_n,\theta\bigr) \geq 2 \epsilon_n \Bigr) \geq 1-c \,,
\]
where the last inequality follows from the assumption~\eqref{eq:inprob-lowerbound}. Now we use Markov's inequality to upper bound the left-hand side above and obtain
\[
\frac{2}{1-\delta} \, \sup_{\theta \in \Theta} \, \E_{\theta}\Big[\P_{\pi'}\bigl(\theta': \ell(\theta',\theta) \geq \epsilon_n \, \big| \, X_{1:n} \bigr)\Bigr]
\geq
\sup_{\theta \in \Theta} \, \P_{\theta}\biggl(\P_{\pi'}\bigl(\theta': \ell(\theta',\theta) \geq \epsilon_n \, \big| \, X_{1:n} \bigr) \geq \frac{1-\delta}{2}\biggr)
\geq 1-c \,.
\]
Letting $\delta \to 0$ and dividing both sides by $2$ yields
\[
1 - \inf_{\theta \in \Theta} \E_{\theta}\Big[\P_{\pi'}\bigl(\theta': \ell(\theta',\theta) < \epsilon_n \, \big| \, X_{1:n} \bigr) \Bigr] \geq \frac{1-c}{2}\,.
\]
Rearranging terms concludes the proof of~\eqref{eq:minimax2BayesLB-result}. We now address the technical issue mentioned at the beginning of the proof. \smallskip

\noindent
\emph{Why a measurable $\hat{\theta}_n$ exists.}~~Note that it is possible to choose $\hat{\theta}_n$ satisfying~\eqref{eq:posterior-conversion-defthetahat} with $\pi'$ in a mesurable way as soon as $\Theta$ is a separable topological space and
\[
\psi: \tilde{\theta} \in \Theta \longmapsto \P_{\pi'}\bigl(\theta': \ell(\theta',\tilde{\theta} \,) < \epsilon_n \, \big| \, x_{1:n} \bigr)
 \]
is lower-semicontinuous for $\mathfrak{m}^{\otimes n}$--almost every $x_{1:n} \in \cX^n$, and thus $\P_{\theta}$--almost surely for all $\theta \in \Theta$. The reason is that, in that case, it is possible to equate the supremum of $\psi$ over $\Theta$ to a supremum on a countable subset of~$\Theta$. Next, and thanks to the continuity assumption on $\ell$, we prove that the desired lower-semicontinuity holds true for all $x_{1:n} \in \cX^n$ (not just almost all of them).

To that end, we show the lower-semicontinuity at any fixed $\theta^\star \in \Theta$.
Consider any sequence $(\tilde{\theta}_i)_{i \geq 1}$ in $\Theta$ converging to $\theta^\star$.
For all $x_{1:n} \in \cX^n$,
by Fatou's lemma applied to the well-defined probability distribution $\P_{\pi'}(\,\cdot\,\,|\,x_{1:n})$,
we have,
\begin{align}
\liminf_{i \to +\infty} \ \P_{\pi'}\Bigl(\theta': \ell\bigl(\theta',\tilde{\theta}_i \,\bigr) < \epsilon_n \, \big| \, x_{1:n} \Bigr) & = \liminf_{i \to +\infty} \ \E_{\pi'}\!\left[\indicator{\{\ell(\theta',\tilde{\theta}_i \,) < \epsilon_n\}} \, \big| \, x_{1:n} \, \right] \nonumber \\
& \geq \E_{\pi'}\!\biggl[ \, \underbrace{\liminf_{i \to +\infty} \indicator{\{\ell(\theta',\tilde{\theta}_i \,) < \epsilon_n\}}}_{\textrm{$=1$ if $\ell(\theta', \theta^\star) < \epsilon_n$}} \, \Big| \, x_{1:n} \, \biggr] \label{eq:liminf-LB} \\
& \geq \P_{\pi'}\Bigl(\theta': \ell\bigl(\theta',\theta^\star \,\bigr) < \epsilon_n \, \big| \, x_{1:n} \Bigr) \,, \nonumber
\end{align}
where in~\eqref{eq:liminf-LB} we identify that the $\liminf$ equals $1$ as soon as
$\ell(\theta', \theta^\star) < \epsilon_n$ by continuity of $\tilde{\theta} \mapsto \ell\bigl(\theta',\tilde{\theta}\,\bigr)$ at $\tilde{\theta} = \theta^\star$.
\end{proof}

\newpage
\section{On Jensen's inequality}
\label{sec:Jensen}

Classical statements of Jensen's inequality for convex functions $\phi$ on $C \subseteq \R^n$ either assume that the underlying probability measure is supported on a finite number of points or that the convex subset $C$ is open. In the first case, the proof follows directly from the definition of convexity, while in the second case, it is a consequence of the existence of subgradients. In both cases, it is assumed that the function $\phi$ under consideration only takes finite values. In this article, Jensen's inequality is applied several times to non-open convex sets $C$,
like $C = [0,1]^2$ or $C = [0,+\infty)$ and/or convex functions $\varphi$ that can possibly be equal
to $+\infty$ at some points.

The restriction of $C$ being open is easy to drop when the dimension equals $n=1$, i.e., when $C$ is an interval; it was dropped, e.g., by \citet[pages 74--76]{Fer-76-MathematicalStatistics} in higher dimensions,
thanks to a proof by induction to address possible boundary effects with respect to the arbitrary convex set~$C$.
Let $\cB(\R^n)$ denote the Borel $\sigma$--field of~$\R^n$.

\begin{lemma}[Jensen's inequality for general convex sets; \citealp{Fer-76-MathematicalStatistics}]
\label{lem:Jensen}
Let $C \subseteq \R^n$ be any non-empty convex Borel subset of~$\R^n$ and $\phi:C \to \R$ be any convex Borel function. Then, for all probability measures $\mu$ on $\bigl(\R^n,\cB(\R^n)\bigr)$ such that $\mu(C) = 1$ and $\int \Vert x \Vert \d \mu(x) < + \infty$, we have
\begin{equation}
\label{eq:Jensen-show}
\int x \d \mu(x) \in C \qquad \textrm{and} \qquad \phi\!\left(\int x \d \mu(x)\right) \leq \int_C \phi(x) \d \mu(x) \,,
\end{equation}
where the integral of $\varphi$ against $\mu$ is well-defined in $\R \cup \{+\infty\}$.
\end{lemma}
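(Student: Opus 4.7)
The plan is to establish both claims simultaneously by induction on the ambient dimension $n$, following Ferguson's strategy to handle the boundary effects created by the fact that $C$ is not assumed to be open. For the base case $n=1$, the convex set $C$ is an interval, and the mean $m \defeq \int x \d\mu(x)$ belongs to $C$ by direct inspection of the endpoints: for instance, if $a \defeq \inf C$ does not lie in $C$, then $x - a > 0$ $\mu$--a.s., hence $\int (x-a) \d\mu(x) > 0$ and thus $m > a$; the case $a \in C$ and the upper endpoint are handled analogously. The inequality then reduces to classical one-dimensional Jensen: convexity of $\phi$ provides a one-sided derivative at $m$, yielding an affine minorant $\ell \leq \phi$ on $C$ with $\ell(m) = \phi(m)$, whose $\mu$--integral is $\phi(m)$.

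For the inductive step, assume the lemma holds in all dimensions strictly smaller than $n \geq 2$. If the affine hull of $C$ has dimension less than $n$, applying an affine bijection onto a lower-dimensional space reduces the problem and invokes the induction hypothesis. Otherwise $C$ has non-empty interior in $\R^n$. A separation argument first shows $m \in \ol{C}$: if $m \notin \ol{C}$, there would exist an affine form $\ell$ with $\ell \leq c$ on $\ol{C}$ and $\ell(m) > c$, contradicting $\int \ell \d\mu = \ell(m)$ since $\mu(C) = 1$. In the easy case where $m$ lies in the interior of $C$, the convex function $\phi$ admits a subgradient at $m$, i.e., an affine minorant $\ell \leq \phi$ on $C$ with $\ell(m) = \phi(m)$; $\mu$--integrating this inequality simultaneously yields $m \in C$ and $\phi(m) \leq \int \phi \d\mu$.

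The main obstacle is the remaining case $m \in \ol{C} \setminus \mathrm{int}(C)$, where $m$ need not belong to $C$ and $\phi$ need not admit a subgradient at $m$. The key trick is to use a supporting hyperplane $H$ of $C$ at $m$, oriented so that the associated affine form $\ell$ satisfies $\ell \geq 0$ on $C$ and $\ell(m) = 0$; then $\int \ell \d\mu = \ell(m) = 0$ while $\ell \geq 0$ $\mu$--a.s., so $\ell = 0$ $\mu$--a.s., and $\mu$ is actually supported on the convex set $C \cap H$, whose affine hull has dimension at most $n-1$. Applying the induction hypothesis to the restriction of $\phi$ to $C \cap H$ then delivers both $m \in C \cap H \subseteq C$ and $\phi(m) \leq \int \phi \d\mu$. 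The integral $\int \phi \d\mu \in \R \cup \{+\infty\}$ is well-defined throughout, since any subgradient of $\phi$ at a point of the relative interior of $C$ provides an affine minorant of $\phi$ that is $\mu$--integrable by the finite first-moment hypothesis, so $\phi^{-}$ is $\mu$--integrable.
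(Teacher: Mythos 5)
The paper does not prove this lemma: it cites \citet[pages 74--76]{Fer-76-MathematicalStatistics} and merely notes that Ferguson's argument ``is a proof by induction to address possible boundary effects.'' Your proof is a correct and faithful reconstruction of exactly that induction on the ambient dimension, including the key step where, when $m$ lies on the boundary, a supporting hyperplane $H$ at $m$ forces $\mu$ to be supported on the lower-dimensional convex set $C \cap H$, to which the induction hypothesis applies; your final remark on the well-definedness of $\int \phi \,\mathrm{d}\mu$ via a subgradient at a relative-interior point also matches the argument the paper itself gives in the proof of Lemma~\ref{lem:Jensen2}. One small imprecision worth mentioning: in the base case $n=1$, if $m$ is an included endpoint of the interval $C$, the one-sided derivative at $m$ may equal $-\infty$ and no affine minorant touching at $m$ exists; however, in that situation the identity $\int (x-m)\,\mathrm{d}\mu = 0$ with $x - m$ of constant sign forces $\mu = \delta_m$, so the inequality holds trivially with equality. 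With that case noted explicitly, the proof is complete.
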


Our contribution is the following natural extension.

\begin{lemma}
\label{lem:Jensen2}
The result of Lemma~\ref{lem:Jensen} also holds for
any convex Borel function $\phi:C \to \R \cup \{+\infty\}$.
\end{lemma}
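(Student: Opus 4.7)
The plan is to reduce Lemma~\ref{lem:Jensen2} to Lemma~\ref{lem:Jensen} by isolating the effective domain of $\phi$. Set $D \defeq \phi^{-1}(\R) = \{x \in C : \phi(x) < +\infty\}$, which is Borel (since $\phi$ is Borel-measurable) and convex (since $\phi$ is convex). Applying Lemma~\ref{lem:Jensen} to the constant function $\phi_0 \equiv 0$ on $C$ already yields $\bar x \defeq \int x \d\mu(x) \in C$, so it only remains to verify that $\int_C \phi \d\mu$ is well-defined in $\R \cup \{+\infty\}$ and that~\eqref{eq:Jensen-show} holds for $\phi$.

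First I would handle the case $\mu(D) = 1$. Then $\phi|_D : D \to \R$ is an $\R$-valued convex Borel function on the convex Borel set $D$, and $\mu$ is still a probability measure with finite first moment. Lemma~\ref{lem:Jensen} applied to $(D, \phi|_D, \mu)$ thus yields $\bar x \in D \subseteq C$ and $\phi(\bar x) \leq \int_D \phi \d\mu$. The convention $0 \cdot (+\infty) = 0$ further gives $\int_C \phi \d\mu = \int_D \phi \d\mu$ (as $\mu(C \setminus D) = 0$ and $\phi = +\infty$ on $C \setminus D$), so~\eqref{eq:Jensen-show} follows.

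For the complementary case $\mu(D) < 1$, the strategy is to show $\int_C \phi \d\mu = +\infty$, so that~\eqref{eq:Jensen-show} is trivially satisfied (recall that $\bar x \in C$ is already known). If $\mu(D) = 0$ this is immediate: $\phi = +\infty$ $\mu$-almost everywhere, hence $\phi^- = 0$ $\mu$-almost everywhere and $\int_C \phi \d\mu = +\infty$. Otherwise $\mu(D) \in (0, 1)$, and I would bootstrap Lemma~\ref{lem:Jensen} by applying it to $(D, \phi|_D)$ together with the renormalized probability measure $\mu_D \defeq \mu|_D / \mu(D)$, whose first moment is finite because it is dominated by $\mu / \mu(D)$. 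This application guarantees that $\int_D \phi \d\mu_D$ is well-defined in $\R \cup \{+\infty\}$, hence $\int_D \phi^- \d\mu_D < +\infty$, and therefore $\int_C \phi^- \d\mu = \mu(D) \int_D \phi^- \d\mu_D < +\infty$ (using also that $\phi^- = 0$ on $C \setminus D$). Hence $\int_C \phi \d\mu$ is well-defined in $\R \cup \{+\infty\}$, and it equals $+\infty$ because $\phi = +\infty$ on $C \setminus D$, a set of positive $\mu$-measure.

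The main delicate point of this plan is the integrability argument in the regime $0 < \mu(D) < 1$: a more direct route would be to produce an explicit affine minorant of $\phi$ on $C$ via a Hahn--Banach separation of the epigraph of $\phi$ from a point strictly below its graph, but this runs into technical subtleties when $D$ has empty topological interior in $\R^n$. Bootstrapping Lemma~\ref{lem:Jensen} on the pair $(D, \mu_D)$ sidesteps this cleanly and is the core trick of the proof.
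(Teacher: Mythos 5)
Your proof is correct, and it takes a genuinely different route from the paper's. Both proofs restrict to the effective domain $D = \{\phi < +\infty\}$ and invoke Lemma~\ref{lem:Jensen} on $(D,\phi|_D)$, but they diverge on how they establish that $\int_C \phi_{-}\d\mu < +\infty$ (so that the integral is well-defined). The paper proves this up front for all cases by producing an explicit affine minorant $\phi(x) \geq a^{\transp}x+b$ on $C$: it takes a point $x_0$ in the (nonempty) relative interior of $D$, invokes Rockafellar's subgradient theorem to get a subgradient there, and bounds $\phi_{-}(x) \leq \norm{a}\norm{x}+|b|$. You instead bootstrap the well-definedness claim already built into Lemma~\ref{lem:Jensen}: in the regime $0<\mu(D)<1$, applying the lemma to $(D,\phi|_D,\mu_D)$ with $\mu_D=\mu|_D/\mu(D)$ gives $\int_D\phi_{-}\d\mu_D<+\infty$ directly, and $\phi_{-}\equiv 0$ on $C\setminus D$ does the rest. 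Your route avoids any convex-analysis machinery (relative interiors, subgradients) and is arguably more self-contained relative to the black box Lemma~\ref{lem:Jensen}; the paper's route is more constructive and makes the standing assumption $\int\norm{x}\d\mu<+\infty$ earn its keep explicitly through the affine minorant. Both are complete; yours is the more economical reduction.
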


We rephrase this extension in terms of random variables.
Let $C \subseteq \R^n$ be any non-empty convex Borel subset of $\R^n$ and $\phi:C \to \R \cup \{+\infty\}$ be any convex Borel function.
Let $X$ be an integrable random variable from any probability space $(\Omega,\cF,\P)$ to $\bigl(\R^n,\cB(\R^n)\bigr)$, such that $\P(X \in C)=1$. Then
\begin{equation*}
\E[X] \in C \qquad \textrm{and} \qquad \phi\bigl(\E[X]\bigr) \leq \E\bigl[\phi(X)\bigr] \,,
\end{equation*}
where $\E\bigl[\phi(X)\bigr]$ is well-defined in $\R \cup \{+\infty\}$. \\

\begin{proof}
We first check that $\phi_- = \max\{-\varphi,\,0\}$ is $\mu$--integrable on $C$,
so that the integral of $\varphi$ against $\mu$ is well-defined in $\R \cup \{+\infty\}$.
To that end, we will prove that $\phi$ is lower bounded on $C$ by an affine function:
$\varphi(x) \geq a^{\transp} x + b$ for all $x \in C$, where $(a,b) \in \R^2$, from which it follows that
$\phi_-(x) \leq \Vert a \Vert \Vert x \Vert + \Vert b \Vert$ for all $x \in C$ and thus
\[
\int_C \phi_-(x) \d \mu(x) \leq \int_C \Bigl( \Vert a \Vert \Vert x \Vert + \Vert b \Vert \Bigr) \d \mu(x) = \Vert a \Vert \int_C \Vert x \Vert \d \mu(x) + \Vert b \Vert < + \infty \,.
\]
So, it only remains to prove the affine lower bound.
If the domain $\{\phi < +\infty\}$ is empty, any affine function is suitable.
Otherwise, $\{\phi < +\infty\}$ is a non-empty convex set, so that its relative interior $R$ is also non-empty (see \citealp[Theorem~6.2]{Roc-72-ConvexAnalysis}); we fix $x_0 \in R$. But, by \citet[Theorem~23.4]{Roc-72-ConvexAnalysis},
the function $\phi$ admits a subgradient at $x_0$, that is, there exists $a \in \R^n$ such that $\phi(x) \geq \phi(x_0) + a^T (x-x_0)$ for all $x \in C$. This concludes the first part of this proof.

In the second part, we show the inequality~\eqref{eq:Jensen-show}
via a reduction to the case of real-valued functions. Indeed, note that if $\mu(\phi=+\infty) > 0$ then the desired inequality is immediate. We can thus assume that $\mu(\phi<+\infty) = 1$. But, using Lemma~\ref{lem:Jensen} with the non-empty convex Borel subset $\tilde{C} = \{\phi<+\infty\}$ and the real-valued convex Borel function $\tilde{\phi}:\tilde{C} \to \R$ defined by $\tilde{\phi}(x)=\phi(x)$, we get, since $\mu(\tilde{C}) = 1$:
\[
\int x \d \mu(x) \in \tilde{C} \qquad \textrm{and} \qquad \tilde{\phi}\!\left(\int x \d \mu(x)\right) \leq \int_{\tilde{C}} \tilde{\phi}(x) \d \mu(x) \,.
\]
Using the facts that $\tilde{\phi}(x) = \phi(x)$ for all $x \in \tilde{C}$ and that $\mu\bigl(C \setminus \tilde{C}\bigr) = 1 - 1 = 0$ entails~\eqref{eq:Jensen-show}.
\end{proof}

We now complete our extension by tacking the conditional form of Jensen's inequality.

\begin{lemma}[A general conditional Jensen's inequality]
\label{lem:Jensencond}
Let $C \subseteq \R^n$ be any non-empty convex Borel subset of $\R^n$ and $\phi:C \to \R \cup \{+\infty\}$ be any convex Borel function.
Let $X$ be an integrable random variable from any probability space $(\Omega,\cF,\P)$ to $\bigl(\R^n,\cB(\R^n)\bigr)$, such that $\P(X \in C)=1$. Then, for every sub-$\sigma$--field $\cG$ of $\cF$, we have, $\P$--almost surely,
\begin{equation*}
\E[X \, | \, \cG ] \in C \qquad \textrm{and} \qquad \phi\bigl(\E[X \, | \, \cG ]\bigr) \leq \E\bigl[\phi(X) \,|\, \cG \bigr] \,,
\end{equation*}
where $\E\bigl[\phi(X) \,|\, \cG \bigr]$ is $\P$--almost-surely well-defined in $\R \cup \{+\infty\}$.
\end{lemma}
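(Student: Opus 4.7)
The natural plan is to reduce the statement to the unconditional Lemma~\ref{lem:Jensen2} via a regular conditional distribution of $X$ given~$\cG$. Since $\R^n$ is Polish and $X$ is $\bigl(\R^n,\cB(\R^n)\bigr)$-valued, I can pick a regular version $\omega \mapsto \mu_\omega$ such that $\mu_\omega$ is $\P$-a.s.\ a probability measure on $\bigl(\R^n,\cB(\R^n)\bigr)$, with
\[
\E\bigl[f(X)\,\big|\,\cG\bigr](\omega) = \bigintsss_{\R^n} f(x)\d\mu_\omega(x) \qquad \P\text{-a.s.}
\]
holding for every non-negative or integrable Borel function $f$ on $\R^n$. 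Applying this identity to $f=\indicator{C}$, to $f=\Vert\,\cdot\,\Vert$, and to each coordinate projection, I can exclude a single $\P$-null set $N$ outside of which, simultaneously: $\mu_\omega(C)=1$ (because $\P(X\in C)=1$), $\int \Vert x\Vert\d\mu_\omega(x)<+\infty$ (because $X$ is integrable, hence so is $\E[\Vert X\Vert\,|\,\cG]$), and $\int x\d\mu_\omega(x)=\E[X\,|\,\cG](\omega)$ (componentwise).

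For every $\omega\notin N$, Lemma~\ref{lem:Jensen2} applied with $\mu=\mu_\omega$ yields directly
\[
\bigintsss_{\R^n} x\d\mu_\omega(x) \in C \qquad\textrm{and}\qquad \phi\!\left(\bigintsss_{\R^n} x\d\mu_\omega(x)\right) \leq \bigintsss_C \phi(x)\d\mu_\omega(x)\,,
\]
which translate respectively to $\E[X\,|\,\cG](\omega)\in C$ and $\phi\bigl(\E[X\,|\,\cG](\omega)\bigr)\leq \E[\phi(X)\,|\,\cG](\omega)$. Both conclusions of Lemma~\ref{lem:Jensencond} then hold on $\Omega\setminus N$, hence $\P$-almost surely.

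The main obstacle---the only genuinely delicate point---is the well-definedness of $\E[\phi(X)\,|\,\cG]$ in $\R\cup\{+\infty\}$ and its identification with $\int\phi(x)\d\mu_\omega(x)$, since $\phi$ can take the value $+\infty$. This is handled via the affine minorant already exhibited in the proof of Lemma~\ref{lem:Jensen2}: a subgradient of $\phi$ at a point of the relative interior of $\{\phi<+\infty\}$ yields $(a,b)\in\R^n\times\R$ with $\phi(x)\geq a^{\transp}x+b$ on $C$, so that $\phi(X)_-\leq \Vert a\Vert\Vert X\Vert+|b|$ is $\P$-integrable. Splitting $\phi(X)=\bigl(\phi(X)-a^{\transp}X-b\bigr)+\bigl(a^{\transp}X+b\bigr)$ into a non-negative summand---handled by applying the regular-conditional-distribution identity to the truncations $\min\bigl\{\phi(X)-a^{\transp}X-b,\,k\bigr\}$ and using monotone convergence as $k\to+\infty$---and an integrable summand handled directly, the identity $\E[\phi(X)\,|\,\cG](\omega) = \int\phi(x)\d\mu_\omega(x)$ then holds $\P$-a.s., which completes the reduction.
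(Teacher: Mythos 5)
Your proposal is correct and follows essentially the same route as the paper: reduce to the unconditional Lemma~\ref{lem:Jensen2} via a regular conditional distribution of $X$ given $\cG$, and use the affine minorant of $\phi$ (from the proof of Lemma~\ref{lem:Jensen2}) to make $\E[\phi(X)\,|\,\cG]$ well-defined in $\R\cup\{+\infty\}$. The only cosmetic difference is that you split $\phi(X)$ as $\bigl(\phi(X)-a^{\transp}X-b\bigr)+\bigl(a^{\transp}X+b\bigr)$ with monotone convergence on truncations, while the paper splits into $\phi(X)_{-}$ (integrable) and $\phi(X)_{+}$ (nonnegative); both serve the identical purpose.
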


\begin{proof}
The proof follows directly from the unconditional Jensen's inequality (Lemma~\ref{lem:Jensen2} above) and from the existence of regular conditional distributions.
More precisely, by \citet[Theorems~2.1.15 and~5.1.9]{Durrett-ProbTheory-4thEd} applied to the case where $(S,\mathcal{S}) = \bigl(\R^n,\cB(\R^n)\bigr)$, there exists a regular conditional distribution of $X$ given $\cG$. That is, there exists a function $K:\Omega \times \cB(\R^n) \to [0,1]$ such that:
\begin{itemize}
	\item[(P1)] for every $B \in \cB(\R^n)$, \ $\omega \in \Omega \mapsto K(\omega,B)$ is $\cG$--measurable and $\P(X \in B \,\big|\, \cG) = K(\,\cdot\,,B)$ $\P$--a.s.;
	\item[(P2)] for $\P$--almost all $\omega \in \Omega$, the mapping $B \mapsto K(\omega,B)$ is a probability measure on $\bigl(\R^n,\cB(\R^n)\bigr)$.
\end{itemize} \smallskip
Moreover, as a consequence of~(P1),
\begin{itemize}
	\item[(P1')] for every Borel function $g : \R^n \to \R$ such that $g(X)$ is $\P$--integrable
or such that $g$ is nonnegative,
\[
\int g(x) \, K(\,\cdot\,,\d x) = \E \bigl[ g(X) \, \big| \, \cG \bigr] \qquad \P\mbox{--a.s.}
\]
\end{itemize}
\noindent Now, given our assumptions and thanks to~(P1) and~(P1'):
\begin{itemize}
\item[(P3)] by $\P(X \in C)=1$ we also have $K(\,\cdot\,,C) = \P(X \in C\,|\,\cG) = 1$ $\P$--a.s.;
\item[(P4)] since $X$ is $\P$--integrable, so is $\int \Vert x \Vert \, K(\,\cdot\,,\d x) = \E\bigl[ \Vert X \Vert  \, \big| \, \cG \bigr]$, which is therefore $\P$--a.s.\ finite.
\end{itemize} \smallskip

We apply Lemma~\ref{lem:Jensen2} with the probability measures $\mu_\omega = K(\omega,\,\cdot\,)$,
for those $\omega$ for which the properties stated in (P2), (P3) and~(P4) actually hold;
these $\omega$ are $\P$--almost all elements of $\Omega$. We get, for these $\omega$,
\[
\int x \, K(\omega,\d x) \in C \qquad \textrm{and} \qquad \phi\!\left(\int x \, K(\omega,\d x)\right) \leq \int_C \phi(x) \, K(\omega,\d x) \,,
\]
where the integral in the right-hand side is well defined in $\R \cup \{+\infty\}$.
Thanks to~(P1'), and by decomposing $\varphi(X)$ into $\varphi_{-}(X)$, which is integrable
(see the beginning of the proof of Lemma~\ref{lem:Jensen2}), and $\varphi_+(X)$, which
is nonnegative, we thus have proved that $\P$--a.s.,
\[
\E\bigl[X \, \big| \, \cG \bigr] \in C \qquad \textrm{and} \qquad \phi\Bigl(\E\bigl[X \, \big| \, \cG \bigr]\Bigr) \leq \E\bigl[\phi(X) \big| \cG \bigr] \,,
\]
which concludes the proof.
\end{proof}

\newpage
\section{Extended version of this article}
\label{sec:Ext}

Here ends the version of this article published in \emph{Statistical Science}. 
Supplementary material follows, featuring the following additional appendices.

\paragraph{Appendix~\ref{sec:divproofs}:} Proofs of the data-processing inequality (Lemma~\ref{lem:dataProcessingIneq--fdiv}) \\
\phantom{\textbf{Appendix~\ref{sec:divproofs}:}~~~}and of the joint convexity of $\Div$ (Corollary~\ref{cor:jointconvKL--fdiv})

\paragraph{Appendix~\ref{sec:Fanotype}:} Additional material on the Fano-type inequalities
of Section~\ref{sec:prooftech}, namely
\begin{itemize}
\item A sharper lower bound on $\div$ for the Hellinger distance
\item Finding a good constant alternative $\Q$
\end{itemize}

\paragraph{Appendix~\ref{sec:Birge-HAL}:} On Birg{\'e}'s lemma, namely
\begin{itemize}
\item A proof of Theorem~\ref{th:Birge}
\item Two other statements of this lemma (the original one and a simplification of it)
\end{itemize}

\newpage
\newpage

\

\vfill

\begin{center}
{\LARGE Supplementary material for the article \vspace{.25cm} \\
``\papertitle'' \vspace{.25cm} \\
by Gerchinovitz, M{\'e}nard, Stoltz}
\end{center}

\vfill
\vfill

\newpage
\section{Proofs of the data-processing inequality (Lemma~\ref{lem:dataProcessingIneq--fdiv}) \\
and of the joint convexity of $\Div$ (Corollary~\ref{cor:jointconvKL--fdiv})}
\label{sec:divproofs}

As indicated in the main body of the article,
the proof of Lemma~\ref{lem:dataProcessingIneq--fdiv}
is extracted from \citet[Section 4.2]{AlSi-66-fdivergences}, see
also \citet[Proposition~1.2]{ThInfo-Pardo}. Note that it can be refined:
\citet[Lemmas 7.5 and 7.6]{Gray} establishes~\eqref{eq:RNderivative-images} below
and then derives some (stronger) data-processing equality (not inequality). \\

\begin{proofref}{Lemma~\ref{lem:dataProcessingIneq--fdiv}, data-processing inequality}
We recall that $\E_{\Q}$ denotes the expectation with respect to a measure~$\Q$.
Let~$X$ be a random variable from $(\Omega,\cF)$ to $(\Omega',\cF')$.
We write the Lebesgue decomposition~\eqref{eq:Lbgdec} of $\P$ with respect to $\Q$.

We first show that $(\Pac)^X \ll \Q^X$ and that the Radon-Nikodym derivative
of $(\Pac)^X$ with respect to $\Q^X$ equals
\begin{equation}
\frac{\d (\Pac)^X}{\d \Q^X} = \E_{\Q}\!\left[\frac{\d \Pac}{\d \Q} \, \Big| X = \cdot \right] \defeq \gamma \,;
\label{eq:RNderivative-images}
\end{equation}
i.e., $\gamma$ is any measurable function such that $\Q$--almost surely,
$\E_{\Q}\bigl[(\d \Pac / \d \Q) \, \big| X \bigr]  = \gamma(X)$.
Indeed, using that $\Pac \ll \Q$, we have, for all $A \in \cF'$,
\begin{align}
(\Pac)^X(A) & = \Pac(X \in A) = \bigintsss_{\Omega} \indicator{A}(X) \, \frac{\d \Pac}{\d \Q} \d \Q
= \bigintsss_{\Omega} \indicator{A}(X) \,\, \E_{\Q}\!\left[\frac{\d \Pac}{\d \Q} \, \Big| X \right] \! \d \Q \label{eq:RNderivative-towerrule} \\
& = \int_{\Omega} \indicator{A}(X) \, \gamma(X) \d \Q = \int_{\Omega'} \indicator{A} \, \gamma \d \Q^X \, , \nonumber
\end{align}
where the last equality in~\eqref{eq:RNderivative-towerrule} follows by the tower rule.

Second, by unicity of the Lebesgue decomposition,
the decomposition of $\P^X$ with respect to $\Q^X$ is therefore given by
\begin{align*}
\P^X = \bigl( \P^X \bigr)_{\ac} + \bigl( \P^X \bigr)_{\sing}
\qquad \mbox{where} & \qquad
\bigl( \P^X \bigr)_{\ac} = (\Pac)^X + \bigl( \Psing \bigr)^X_{\ac} \\
\qquad \mbox{and} & \qquad
\bigl( \P^X \bigr)_{\sing} = \bigl( \Psing \bigr)^X_{\sing}\,.
\end{align*}
The inner $\ac$ and $\sing$ symbols refer to the pair $\P,\Q$ while the outer
$\ac$ and $\sing$ symbols refer to $\P^X,\Q^X$.

{\allowdisplaybreaks
We use this decomposition for the first equality below and integrate~\eqref{eq:Mf} for the first inequality below:
\begin{align}
\Div\bigl(\P^X,\Q^X\bigr)
& = \bigintsss_{\Omega'} f\Biggl( \frac{\d (\Pac)^X}{\d \Q^X} + \frac{\d \bigl( \Psing \bigr)^X_{\ac}}{\d \Q^X} \Biggr)
\d \Q^X + \bigl( \Psing \bigr)^X_{\sing}(\Omega') \, M_f \nonumber \\
& \leq \bigintsss_{\Omega'} f\Biggl( \frac{\d (\Pac)^X}{\d \Q^X} \Biggr)
\d \Q^X + \Bigl( \bigl( \Psing \bigr)^X_{\ac}(\Omega') + \bigl( \Psing \bigr)^X_{\sing}(\Omega') \Bigl)
\, M_f \nonumber \\
& = \int_{\Omega'} f(\gamma) \d \Q^X + (\Psing)^X(\Omega') \, M_f \nonumber \\
& = \int_{\Omega} f\bigl(\gamma(X)\bigr) \d \Q + \Psing(\Omega) \, M_f \nonumber \\
& = \bigintsss_{\Omega} f\Biggl(\E_{\Q}\!\left[\frac{\d \P}{\d \Q} \, \Big| X \right] \Biggr) \d \Q
+ \Psing(\Omega) \, M_f\nonumber \\
& \leq \bigintsss_{\Omega} \E_{\Q}\Biggl[ f\biggl(\frac{\d \P}{\d \Q}\biggr) \, \bigg| X \Biggr] \d \Q
+ \Psing(\Omega) \, M_f \label{eq:RNderivative-condJensen}  \\
& = \bigintsss_{\Omega} f\biggl(\frac{\d \P}{\d \Q}\biggr)\! \d \Q  +
\Psing(\Omega) \, M_f = \Div(\P,\Q) \, , \nonumber
\end{align}
where the inequality in~\eqref{eq:RNderivative-condJensen} is a
consequence of the conditional Jensen's inequality in its general form stated in Appendix~\ref{sec:Jensen}, Lemma~\ref{lem:Jensencond}, with $\phi=f$ and $C=[0,+\infty)$,
and where the final equality follows from the tower rule.
}
\end{proofref}

The joint convexity of $\Div$ (Corollary~\ref{cor:jointconvKL--fdiv}) may be proved directly, in two steps.
First, the log-sum inequality is generalized into the fact that the mapping
$(p,q) \in [0,+\infty)^2 \mapsto q\,f(p/q)$ is jointly convex. Second,
a common dominating measure like $\mu = \P_1 + \P_2 + \Q_1 + \Q_2$
is introduced, Radon-Nikodym derivatives $p_j$ and $q_j$ are introduced
for the $\P_j$ and $\Q_j$ with respect to $\mu$, and the generalized log-sum
inequality is applied pointwise.

We suggest to see instead Corollary~\ref{cor:jointconvKL--fdiv}
as an elementary consequence of the data-processing inequality.
\bigskip

\begin{proofref}{Corollary~\ref{cor:jointconvKL--fdiv}, joint convexity of $\Div$}
We augment the probability space $\Omega$ into $\Omega' = \{1,2\} \times \Omega$, which we
equip with the $\sigma$--algebra $\cF'$ generated by the events $A \times B$,
where $A \in \bigl\{ \emptyset, \, \{1\}, \, \{2\}, \, \{1,2\} \bigr\}$ and $B \in \cF$.
We define the random pair $(J,X)$ on this space by the projections
\[
X : (j,\omega) \in \{1,2\} \times \Omega \longmapsto \omega
\qquad \mbox{and} \qquad
J : (j,\omega) \in \{1,2\} \times \Omega \longmapsto j\,,
\]
and denote by $\P$ the joint distribution of the random pair $(J,X)$ such that $J \sim 1+\Ber(\lambda)$
and $X|J \sim \P_J$. More formally, $\P$ is the unique probability distribution on $(\Omega',\cF')$ such that, for all $(j,B) \in \{1,2\} \times \cF$,
\[
\P\bigl(\{j\} \times B \bigr) =
\left((1-\lambda)  \indicator{\{j=1\}} + \lambda \indicator{\{j=2\}} \right) \P_j(B)\,.
\]
Similarly we define the joint probability distribution $\Q$ on $(\Omega',\cF')$ using the conditional distributions $\Q_1$ and $\Q_2$ instead of $\P_1$ and $\P_2$.

The corollary follows directly from the data-processing inequality $\Div\!\left(\P^X,\Q^X\right) \leq \Div(\P,\Q)$, as the laws of $X$ under $\P$ and
$\Q$ are respectively given by
\[
\P^X = (1-\lambda) \P_1 + \lambda \P_2
\qquad \mbox{and} \qquad
\Q^X = (1-\lambda)  \Q_1 + \lambda \Q_2\,,
\]
while elementary calculations show that $\Div(\P,\Q) = (1-\lambda) \, \Div(\P_1,\Q_1) + \lambda \, \Div(\P_2,\Q_2)$.

Indeed, for the latter point, we consider the Lebesgue decompositions of $\P_j$ with respect to
$\Q_j$, where $j \in \{1,2\}$:
\[
\P_j = \P_{j,\ac} + \P_{j,\sing}\,, \qquad \mbox{where} \qquad
\P_{j,\ac} \ll \Q_j \quad \mbox{and} \quad \P_{j,\sing} \bot \Q_j \,.
\]
The (unique) Lebesgue decomposition of $\P = \Pac + \Psing$ with respect to $\Q$ is
then given by
\[
\frac{\d \Pac}{\d \Q}(j,\omega) = \ind{\{j=1\}}\,\frac{\d \P_{1,\ac}}{\d \Q_1}(\omega)
+ \ind{\{j=2\}}\,\frac{\d \P_{2,\ac}}{\d \Q_2}(\omega)
\]
and for all $(j,B) \in \{1,2\} \times \cF$,
\[
\Psing \bigl(\{j\} \times B \bigr) =
\left((1-\lambda)  \indicator{\{j=1\}} + \lambda \indicator{\{j=2\}} \right) \P_{j,\sing}(B)\,.
\]
This entails that
\begin{align*}
\Div(\P,\Q) & =
\bigintsss_{\{1,2\} \times \Omega} f \! \left( \frac{\d \Pac}{\d \Q}(j,\omega) \right)\! \d \Q(j,\omega)
+ \Psing \bigl(\{1,2\} \times \Omega \bigr) \, M_f
\\
& =
(1-\lambda)  \bigintsss_{\Omega} f \! \left( \frac{\d \Pac}{\d \Q}(1,\omega) \right)\! \d \Q_1(\omega)
+ \lambda \bigintsss_{\Omega} f \! \left( \frac{\d \Pac}{\d \Q}(2,\omega) \right)\! \d \Q_2(\omega) \\
& \phantom{\lambda \bigintsss_{\Omega} f \! \left( \frac{\d \Pac}{\d \Q}(1,\omega) \right)\! \d \Q_1(\omega)}
+ \bigl( (1-\lambda)  \, \P_{1,\sing}(\Omega) + \lambda \, \P_{2,\sing}(\Omega) \bigr) \, M_f \\
& = (1-\lambda)  \, \Div(\P_1,\Q_1) + \lambda \, \Div(\P_2,\Q_2)\,.
\end{align*}
\vspace{-1.3cm}
\

\end{proofref}

\newpage
\section{Additional material on the Fano-type inequalities of Section~\ref{sec:prooftech}}
\label{sec:Fanotype}

We first provide (Section~\ref{sec:Hellsharper}) a sharper
bound than the bound~\eqref{eq:LB-H}
exhibited in Section~\ref{sec:LBdiv} for the case of the
Hellinger distance and which read
\[
p \leq q + \sqrt{1-\bigl(1-h^2(p,q)/2\bigr)^2} = q + \sqrt{h^2(p,q)\bigl(1-h^2(p,q)/4\bigr)}\,.
\]

We then (Section~\ref{sec:cstaltQ}) study quantities of the form
\[
\inf_{\Q} \sum_{i=1}^N \Div(\P_i,\Q)\,,
\]
that arise in the bounds of Section~\ref{sec:excombi}
in the case of partitions. We discuss
what $\Q$ should be chosen and what bounds can be achieved.

\subsection{A sharper lower bound on $\div$ for the Hellinger distance}
\label{sec:Hellsharper}

We follow and slightly generalize \citet[Example~II.6]{guntuboyina2011lower}.
As we prove below,
we get the bound
\begin{equation}
\label{eq:Helltoprove}
p \leq q + (1-2q) \, h^2(p,q) \bigl(1-h^2(p,q)/4 \bigr) +
2 \sqrt{q(1-q)} \, \bigr(1-h^2(p,q)/2\bigl) \sqrt{h^2(p,q) \bigl(1-h^2(p,q)/4 \bigr)}\,.
\end{equation}
It can be seen that this bound is a general expression of the bound stated by \citet[Example~II.6]{guntuboyina2011lower}.
This bound is slightly tighter than~\eqref{eq:LB-H}, by construction (as we solve exactly an equation and perform no bounding)
but it is much less readable. It anyway leads to similar conclusions in practice. \\

\begin{proof}
Assuming that $\underline{h}^2 = h^2(p,q)$ is given and fixed,
we consider the equation, for the unknown $x \in [0,1]$,
\[
\underline{h}^2 = 2 \biggl( 1 - \Bigl( \sqrt{q} \sqrt{x} + \sqrt{1-q} \, \sqrt{1-x} \Bigr) \biggr) \,;
\]
this equation is satisfied for $x = p$, by definition of $h^2(p,q)$. Rearranging it, we get the
equivalent equation
\[
(1-x)(1-q) = \bigl( 1 - \underline{h}^2/2 - \sqrt{q} \sqrt{x} \bigr)^2
= \bigl( 1 - \underline{h}^2/2 \bigr)^2 - 2 \bigl( 1 - \underline{h}^2/2 \bigr) \sqrt{q} \, \sqrt{x}
+ q x\,,
\]
or equivalently again,
\[
x - 2 \bigl( 1 - \underline{h}^2/2 \bigr) \sqrt{q} \, \sqrt{x}
+ \bigl( 1 - \underline{h}^2/2 \bigr)^2 - 1 + q = 0\,.
\]
Solving this second-order equation for $\sqrt{x}$, we see that
all solutions $\sqrt{x}$, including $\sqrt{p}$, are
smaller than the largest root; in particular,
\[
\sqrt{p} \leq \bigl( 1 - \underline{h}^2/2 \bigr) \sqrt{q} +
\underbrace{\sqrt{\bigl( 1 - \underline{h}^2/2 \bigr)^2 q - \bigl( 1 - \underline{h}^2/2 \bigr)^2 + 1 - q}}_{=
\sqrt{(1-q) \, \underline{h}^2 (1-\underline{h}^2/4)}}\,.
\]
Put differently,
\begin{align*}
p & \leq \bigl( 1 - \underline{h}^2/2 \bigr)^2 q + (1-q) \, \underline{h}^2 \bigl(1-\underline{h}^2/4 \bigr)
+ 2 \sqrt{q(1-q)} \, \bigr(1-\underline{h}^2/2\bigl) \sqrt{\underline{h}^2 \bigl(1-\underline{h}^2/4 \bigr)} \\
& = q + (1-2q) \, h^2(p,q) \bigl(1-h^2(p,q)/4 \bigr) +
2 \sqrt{q(1-q)} \, \bigr(1-h^2(p,q)/2\bigl) \sqrt{h^2(p,q) \bigl(1-h^2(p,q)/4 \bigr)}\,,
\end{align*}
which was the expression to obtain.
\end{proof}

\subsection{Finding a good constant alternative $\Q$}
\label{sec:cstaltQ}

The key term in the bounds of Section~\ref{sec:excombi} in the case
of a partition or of random variables summing up to~$1$
is given by
\[
\inf_{\Q} \sum_{i=1}^N \Div(\P_i,\Q)\,.
\]
We however need a closed-form (or at least, a more concrete) expression
of this quantity for these bounds to have a practical interest.
This is the issue we tackle in this section.

Instead of simply studying quantities of the form indicated above,
we consider
\[
\inf_{\Q} \sum_{i=1}^N \alpha_i \, \Div(\P_i,\Q)
\]
where $\alpha = (\alpha_1,\ldots,\alpha_N)$, with all $\alpha_i > 0$,
denotes some convex combination.

Sometimes calculations are easy in practice for some specific $\Q$,
as we illustrated, for instance, in Section~\ref{sec:usecase-sequential}.
Otherwise, the lemma below indicates a good candidate, given by the
weighted average $\ol{\P}_\alpha$ of the distributions $\P_i$.

To appreciate its performance, we denote by
\[
B_f(\alpha) = \max_{j=1,\ldots,N} \Div(\delta_j,\alpha)
\]
the maximal $f$--divergence between a Dirac mass $\delta_j$ at $j$
and the convex combination $\alpha$.
This bound equals $\ln\bigl(1/\min\{\alpha_1,\ldots,\alpha_N\}\bigr)$
for a Kullback-Leibler divergence and $1/\min\{\alpha_1,\ldots,\alpha_N\} -1$ for the $\chi^2$--divergence.

\begin{lemma}
Let $\P_1,\ldots,\P_N$ be $N$ probability distributions on the same measurable space $(\Omega,\cF)$
and let $\alpha = (\alpha_1,\ldots,\alpha_N)$ be a convex combination made of positive weights. Then,
\[
\inf_{\Q} \sum_{i=1}^N \alpha_i \, \Div\!\left(\P_i,\Q\right)
\leq \sum_{i=1}^N \alpha_i \, \Div\!\left(\P_i,\ol{\P}_\alpha \right) \leq B_f(\alpha)\,,
\] \vspace{-.3cm}

\noindent where the infimun is over all probability distributions $\Q$ on $(\Omega,\cF)$
and where $\displaystyle{\ol{\P}_\alpha \defeq \sum_{i=1}^N \alpha_i \, \P_i}$.
\end{lemma}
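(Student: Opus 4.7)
The plan is to handle the two inequalities separately. The first one, $\inf_{\Q} \sum_i \alpha_i \, \Div(\P_i,\Q) \leq \sum_i \alpha_i \, \Div(\P_i,\ol{\P}_\alpha)$, is immediate: it suffices to plug the particular choice $\Q = \ol{\P}_\alpha$ into the infimum. So the real content is the second inequality.

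For the second inequality, the key remark is that each $\P_i$ is absolutely continuous with respect to $\ol{\P}_\alpha$, since $\alpha_i \P_i \leq \ol{\P}_\alpha$. I would introduce the Radon-Nikodym derivatives $p_i \defeq \d\P_i/\d\ol{\P}_\alpha$, which are nonnegative and satisfy $\sum_i \alpha_i p_i = 1$ holding $\ol{\P}_\alpha$-almost surely. Since there is no singular part, $\Div(\P_i,\ol{\P}_\alpha) = \int f(p_i) \d\ol{\P}_\alpha$. Summing and applying Tonelli's theorem yields
\[
\sum_{i=1}^N \alpha_i \, \Div(\P_i, \ol{\P}_\alpha) = \bigintsss_\Omega \sum_{i=1}^N \alpha_i f\bigl(p_i(x)\bigr) \d\ol{\P}_\alpha(x).
\]
Now I would identify the integrand at each $x$ as an $f$-divergence between two discrete distributions on $\{1,\ldots,N\}$: namely, between the Bayesian posterior $\pi_x$ given by $\pi_x(\{i\}) = \alpha_i p_i(x)$ (this is a probability distribution since $\sum_i \alpha_i p_i(x) = 1$) and the prior $\alpha$. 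Indeed, $\d \pi_x / \d\alpha(i) = p_i(x)$, so $\Div(\pi_x,\alpha) = \sum_i \alpha_i f\bigl(p_i(x)\bigr)$.

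Finally, I would bound $\Div(\pi_x,\alpha)$ using joint convexity (Corollary~\ref{cor:jointconvKL--fdiv}): writing $\pi_x = \sum_j \alpha_j p_j(x) \, \delta_j$ and $\alpha = \sum_j \alpha_j p_j(x) \, \alpha$ (same convex combination applied to the fixed distribution $\alpha$), joint convexity gives
\[
\Div(\pi_x, \alpha) \leq \sum_{j=1}^N \alpha_j p_j(x) \, \Div(\delta_j,\alpha) \leq \max_{j=1,\ldots,N} \Div(\delta_j,\alpha) = B_f(\alpha).
\]
Integrating this pointwise bound against $\ol{\P}_\alpha$ delivers the claimed inequality. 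No step looks delicate: the only conceptual move is to read the integrand as a posterior-versus-prior $f$-divergence, after which joint convexity finishes the argument immediately. (As a by-product, one actually obtains the sharper intermediate bound $\sum_j \alpha_j \Div(\delta_j,\alpha)$, which for the Kullback-Leibler case reduces to the Shannon entropy of~$\alpha$.)
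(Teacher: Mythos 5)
Your proof is correct and takes essentially the same route as the paper's. The paper packages the same computation slightly differently: it introduces explicit auxiliary measures $\tilde{\P},\tilde{\Q}$ on the augmented space $\{1,\ldots,N\}\times\Omega$ and computes $\Div(\tilde{\P},\tilde{\Q})$ two ways (summing over $j$ first, then integrating over $\omega$ first), whereas you directly expand the sum of divergences as an integral and read the integrand as a posterior-versus-prior discrete $f$-divergence --- but these are the same manipulations, and both proofs finish with the identical joint-convexity bound $\Div(\alpha p(\omega),\alpha)\leq\sum_j\alpha_j p_j(\omega)\,\Div(\delta_j,\alpha)\leq B_f(\alpha)$. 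Your side remark about the sharper intermediate bound $\sum_j\alpha_j\,\Div(\delta_j,\alpha)$ (which indeed specializes to the Shannon entropy $H(\alpha)$ for the Kullback--Leibler case) is a valid and genuine refinement obtained by integrating the first inequality rather than taking the crude $\max$, and the paper does not state it.
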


The first inequality holds with equality in the case
of the Kullback-Leibler divergence,
as follows from the so-called compensation equality
(see, e.g., \citealp{YaBa-99-MinimaxRates}
or \citealp[Example~II.4]{guntuboyina2011lower}):
assuming with no loss of generality in this case (since $M_f = +\infty$)
that $\P_j \ll \Q$ for all $j \in \{1,\ldots,N\}$, we have $\ol{\P}_\alpha \ll \Q$ and $\mathrm{d} \P_j/\mathrm{d} \Q = (\mathrm{d} \P_j/\mathrm{d} \ol{\P}_\alpha) (\mathrm{d} \ol{\P}_\alpha/\mathrm{d} \Q)$, which entails
\[
\sum_{i=1}^N \alpha_i \, \KL(\P_i,\Q)
= \sum_{i=1}^N \alpha_i \bigintsss \left( \ln \frac{\d\P_i}{\d\ol{\P}_\alpha} +
\ln \frac{\d\ol{\P}_\alpha}{\d\Q}\right)\! \d \P_i
= \left( \sum_{i=1}^N \alpha_i \KL\bigl(\P_i,\ol{\P}_\alpha\bigr) \right) + \KL\bigl(\ol{\P}_\alpha,\Q\bigr) \,,
\]
where we used that $\displaystyle{\sum_{i=1}^N \alpha_i \d \P_i} = \d\ol{\P}_\alpha$.
So, indeed, the considered infimum is achieved at $\Q=\ol{\P}$. \\

\begin{proof}
The first inequality follows from the choice $\Q=\ol{\P}_\alpha$.
For the second inequality, we proceed as in Corollary~\ref{cor:jointconvKL--fdiv}
and consider the following probability distributions on $\{1,\ldots,N\} \times \Omega$:
for all $j \in \{1,\ldots,N\}$ and all $B \in \cF$,
\[
\tilde{\P}\bigl(\{j\} \times B \bigr) = \alpha_j \, \P_j(B)
\qquad \mbox{and} \qquad
\tilde{\Q}\bigl(\{j\} \times B \bigr) = \alpha_j \, \ol{\P}_\alpha(B)\,.
\]
Note that because $\alpha_i > 0$ for all $i$, we have $\P_j \ll \ol{\P}_\alpha$
for all $j$. Thus, $\tilde{\P} \ll \tilde{\Q}$, with Radon-Nikodym derivative given by
\[
(j,\omega) \in \{1,\ldots,N\} \times \Omega \,\, \longmapsto \,\,
\frac{\d\tilde{\P}}{\d\tilde{\Q}}(j,\omega) =
\frac{\d\P_j}{\d\ol{\P}_\alpha}(\omega)
\defeq p_j(\omega)\,.
\]
By uniqueness and linearity of the Radon-Nikodym derivatives,
we thus have, for $\ol{\P}_\alpha$--almost all $\omega$,
\[
\sum_{j=1}^N \alpha_j \, p_j(\omega)
= \sum_{j=1}^N \alpha_j \, \frac{\d\P_j}{\d\ol{\P}_\alpha}(\omega)
= \frac{\d\ol{\P}_\alpha}{\d\ol{\P}_\alpha}(\omega) = 1\,,
\qquad \mbox{where} \qquad \forall k \in \{1,\ldots,N\}, \ \ \alpha_k p_k(\omega) \geq 0\,;
\]
that is, $\alpha p(\omega) = \bigl( \alpha_j p_j(\omega) \bigr)_{1 \leq j \leq N}$ is
a probability distribution on $\{1,\ldots,N\}$. (It corresponds to the conditional distribution of $j$ given $\omega$ in the probabilistic model $j \sim \alpha$ and $\omega | j \sim \P_j$.)

We now compute $\Div\bigl(\tilde{\P},\tilde{\Q}\bigr)$ in two different ways.
All manipulations below are valid because all integrals defining $f$--divergences
exist (see the comments after the statement of Definition~\ref{def:fdiv},
as well as the first part of the proof of Lemma~\ref{lem:Jensen2}).
Integrating over $j$ first,
\begin{align*}
\Div\bigl(\tilde{\P},\tilde{\Q}\bigr)
& = \bigintss_{\{1,\ldots,N\} \times \Omega} f\!\left(\frac{\d \tilde{\P}}{\d \tilde{\Q}}(j,\omega)\right)\! \d \tilde{\Q}(j,\omega) \\
& = \sum_{j=1}^N \alpha_j \bigintsss_{\Omega} f\!\left(\frac{\d\P_j}{\d\ol{\P}_\alpha}(\omega)\right)\! \d \ol{\P}_\alpha(\omega)
= \sum_{j=1}^N \alpha_j \, \Div\!\left(\P_j,\ol{\P}_\alpha\right)\,.
\end{align*}
On the other hand,
integrating over $\omega$ first,
\begin{align*}
\Div\bigl(\tilde{\P},\tilde{\Q}\bigr) & =
\bigints_{\Omega} \left( \sum_{j=1}^N f\bigl(p_j(\omega)\bigr) \, \alpha_j \right)\! \d \ol{\P}_\alpha(\omega) \\
& = \bigints_{\Omega} \left( \sum_{j=1}^N f\!\left(\frac{\alpha_j p_j(\omega)}{\alpha_j} \right) \alpha_j \right)\!
\d \ol{\P}_\alpha(\omega) =
\bigintsss_{\Omega} \Div\bigl( \alpha p(\omega), \alpha \bigr) \d \ol{\P}_\alpha(\omega)
\leq B_f(\alpha)\,,
\end{align*}
where the last inequality
follows by noting that, by joint convexity of $\Div$ (see Corollary~\ref{cor:jointconvKL--fdiv}),
\[
\Div\bigl( \alpha p(\omega), \alpha \bigr)
\leq \sum_{j=1}^n \alpha_j p_j(\omega) \,
\Div\bigl( \delta_j, \alpha \bigr) \leq B_f(\alpha)\,.
\]
Comparing the two obtained expressions for $\Div\bigl(\tilde{\P},\tilde{\Q}\bigr)$ concludes the proof.
\end{proof}

\newpage
\section{On Birg{\'e}'s lemma}
\label{sec:Birge-HAL}

Theorem~\ref{th:Birge} is actually a slightly simplified version of the main result by~\citet{Birge05Fano} (his Corollary~1).
The proof of Theorem~\ref{th:Birge} below follows the methodology described in Section~\ref{sec:prooftech}.
In Section~\ref{sec:Birge-other}, we also state, discuss, and prove a previous (looser) simplification by~\citet{Massart03StFlour}
and the original result \citep[Corollary~1]{Birge05Fano}. 

\subsection{Proof of Theorem~\ref{th:Birge}}
\label{sec:app-proofBirge}

\begin{proof}
We denote by $h : p \in [0,1] \mapsto - \bigl( p \ln(p) + (1-p)\ln(1-p) \bigr)$
the binary entropy function. The existence of $c_N$ follows from the fact
that $c \in (0,1) \mapsto h(c)/c + \ln(1-c)$ is continuous and decreasing, as the sum of two
such functions; its respective limits are $+\infty$ and $-\infty$ at $0$ and $1$.

Reduction~\eqref{eq:red1} with $\Q_i = \P_1$ for all $i \geq 2$ indicates that
$\kl\bigl(\tilde{p},\tilde{q}\bigr) \leq \ol{K}$ where
\[
\tilde{p} \defeq \frac{1}{N-1} \sum_{i=2}^N \P_i(A_i)\,, \qquad
\tilde{q} \defeq \frac{1}{N-1} \sum_{i=2}^N \P_1(A_i) = \frac{1-\P_1(A_1)}{N-1}\,, \qquad
\ol{K} = \frac{1}{N-1} \sum_{i=2}^N \KL(\P_i,\P_1)\,;
\]
note that
we used the assumption of a partition to get the alternative definition of the $\tilde{q}$ quantity.
We use the following lower bound on $\kl$, which follows from calculations
similar to the ones performed in~\eqref{eq:bd1-intro}, using that $c_N \geq 1/2$ and that the binary
entropy $h : p \mapsto - \bigl( p \ln(p) + (1-p)\ln(1-p) \bigr)$ is decreasing on $[1/2,\,1]$:
for $p \geq c_N$,
\[
\kl(p,q) \geq p \ln\!\left(\frac{1}{q}\right) - h(c_N)
\geq p \ln\!\left(\frac{1}{q}\right) - p \, \frac{h(c_N)}{c_N} \,,
\]
where $\ln(1/q) - h(c_N)/c_N > 0$ for $q < \exp\bigl(-h(c_N)/c_N\bigr)$. Hence,
\begin{equation}
\label{eq:bd1-Birge}
\forall p \in [0,1], \ \
\forall \, q \in \Bigl(0, \, \exp\bigl(-h(c_N)/c_N\bigr) \Bigr), \qquad \quad
p \leq \max\!\left\{ c_N, \,\, \frac{\kl(p,q)}{\ln(1/q) - h(c_N)/c_N} \right\}.
\end{equation}
Now, we set $a = \displaystyle{\min_{1 \leq i \leq N} \P_i(A_i)}$ and may
assume $a \geq c_N$ (otherwise, the stated bound is obtained).

We have, by the very definition of $a$ as a minimum and by the definition~\eqref{def:cNBirge} of $c_N$,
\begin{equation}
\label{eq:Na-Birge}
a \leq \tilde{p} \qquad \mbox{and} \qquad
\tilde{q} \leq \frac{1-a}{N-1} \leq \frac{1-c_N}{N-1} = \frac{1}{N} \exp \! \left( - \frac{h(c_N)}{c_N} \right) \,.
\end{equation}
Note that, if $\tilde{q} = 0$ then $\ol{K} \geq \kl\bigl(\tilde{p},\tilde{q}\bigr) = +\infty$ (since $\tilde{p} \geq a \geq c_N > 0$) so that the desired bound holds trivially. We may therefore assume that $\tilde{q} > 0$ and combine $\kl\bigl(\tilde{p},\tilde{q}\bigr) \leq \ol{K}$
with~\eqref{eq:bd1-Birge} to get
\[
a \leq \tilde{p} \leq
\max\!\left\{ c_N, \,\, \frac{\kl\bigl(\tilde{p},\tilde{q}\bigr)}{\ln\bigl(1/\tilde{q}\bigr) - h(c_N)/c_N}
\right\} \leq \max\!\left\{ c_N, \,\, \frac{\ol{K}}{\ln(N)}
\right\},
\]
where, for the last inequality, we used the upper bound on $\tilde{q}$ in~\eqref{eq:Na-Birge}.
\end{proof}

\subsection{Two other statements of Birg{\'e}'s lemma}
\label{sec:Birge-other}

The original result by \citet[Corollary~1]{Birge05Fano}
reads, with the notation of Theorem~\ref{th:Birge}:
\begin{equation}
\label{eq:Birgeoriginal}
\min_{1 \leq i \leq N} \P_i(A_i) \leq \max\!\left\{ d_N,\,\,
\frac{\ol{K}}{\ln(N)}
\right\},
\end{equation}
where $(d_N)_{N \geq 2}$ is a decreasing sequence, defined as follows,
based on functions $r_N : [0,1) \to \R$:
\[
r_N(b) = \kl\biggl(b,\frac{1-b}{N-1}\biggr) - b \ln(N)
\qquad \mbox{and} \qquad
d_N = \max \bigl\{ b \in [0,1] : \ r_N(b) \leq 0 \bigr\}\,.
\]
This original result was only stated for $N \geq 3$ but its proof
indicates that it is also valid for $N=2$.

On the other hand, the simplification by \citet[Section~2.3.4]{Massart03StFlour} leads to
\begin{equation}
\label{eq:BirgeMassart}
\min_{1 \leq i \leq N} \P_i(A_i) \leq \max\!\left\{ \frac{2\e - 1}{2\e}, \,\,
\frac{\ol{K}}{\ln(N)}
\right\}.
\end{equation}
(The original constant was a larger $2\e/(2\e+1)$ in \citealp[Section~2.3.4]{Massart03StFlour}.)
\medskip

Before proving these results, we compare them with Theorem~\ref{th:Birge}.
The values of the $c_N$ of Theorem~1, of the $d_N$ of~\eqref{eq:Birgeoriginal}
and of $(2\e-1)/(2\e)$ are given by (values rounded upwards)
\begin{center}
$\displaystyle{\frac{2\e-1}{2\e}} \approx 0.8161$ \qquad \qquad \mbox{and} \qquad \qquad
\begin{tabular}{cccccc}
\hline
$N$ & \ & $2$ & $3$ & $7$ & $+\infty$ \\
$c_N$ & & $0.7587$ & $0.7127$ & $< 0.67$ & $0.63987$ \\
$d_N$ & & $0.7428$ & $0.7009$ & $< 2/3$ & $0.63987$ \\
\hline
\end{tabular}
\end{center}
The $c_N$ and $d_N$ are thus extremely close. While the $c_N$
are slightly larger than the $d_N$ (with, however, the same limit),
they are easier to compute in practice. (See the closed-form expression for $r_N$
below.) Also, the proof of Theorem~\ref{th:Birge}
is simpler than the proof of \citet[Corollary~1]{Birge05Fano}: they rely
on the same proof scheme but the former involves fewer calculations than the latter.
Indeed, let us now prove again \citet[Corollary~1]{Birge05Fano}.

\paragraph{Proof of~\eqref{eq:Birgeoriginal}:}
We use the notation of the proof of Theorem~\ref{th:Birge} and
its beginning.
We can assume with no loss of generality that $a \geq d_N$, and we also have $d_N \geq 1/N$ as $r_N(1/N)=-\ln(N)/N \leq 0$. Therefore, $a \geq 1/N$ and
using the definition of $a$ as a minimum,
\begin{equation}
\label{eq:classmtB}
\tilde{q} \leq \frac{1-a}{N-1} \leq a \leq \tilde{p}\,;
\end{equation}
therefore,
\[
\kl\bigl(\tilde{p},\tilde{q}\bigr)
\geq \kl\bigl(a,\tilde{q}\bigr)
\geq \kl\biggl(a,\frac{1-a}{N-1}\biggr)\,,
\]
since by convexity, $p \mapsto \kl(p,q)$ is
increasing on $[q,1]$ and $q \mapsto \kl(p,q)$ is
decreasing on $[0,p]$.
Combining this with $\ol{K} \geq \kl\bigl(\tilde{p},\tilde{q}\bigr)$,
one has proved
\[
\ol{K} \geq \kl\biggl(a,\frac{1-a}{N-1}\biggr)
= a \ln(N) + r_N(a)\,,
\]
from which the bound~\eqref{eq:Birgeoriginal} follows by definition of $d_N$.
To prove that the sequence $(d_N)$ is decreasing and to get a numerical expression
via dichotomy follow from studying the variations of $r_N(b)$ in $b$ and~$N$;
for the latter; one should show, in particular, that $r_N(b)$ is positive before $d_N$
and negative after $d_N$. This last analytical part of the proof is tedious, as
\begin{align*}
r_N(a) & = a \ln \!\left(\frac{a}{1-a} \right) + (1-a) \ln \!\left( \frac{1-a}{1 - \displaystyle{\frac{1-a}{N-1}}} \right) + \bigl( a \ln(N-1) - a \ln(N) \bigr) \\
& = \bigl( a \ln(a) + (1-2a)\ln(1-a) \bigr) + a \ln\!\left(\frac{N-1}{N}\right)
+ (1-a) \ln\!\left(\frac{N-1}{N-2+a}\right),
\end{align*}
and we could overcome these heavy calculations in our proof of  Theorem~\ref{th:Birge}. \qed 

\paragraph{Proof of~\eqref{eq:BirgeMassart}:}
For $p \geq \ln(2)$ and all $q \in [0,1]$,
\begin{equation}
\label{eq:lbklmassart}
\kl(p,q) \geq p \ln\!\left(\frac{1}{q}\right) - \ln(2)
\geq p \ln\!\left(\frac{1}{q}\right) - p
= p \ln\!\left(\frac{1}{\e q}\right).
\end{equation}
Equation~\eqref{eq:Na-Birge} is adapted as
\[
a \leq \tilde{p} \qquad \mbox{and} \qquad
\tilde{q} \leq \frac{1-a}{N-1} \leq
\frac{2(1-a)}{N} \leq \frac{1}{\e N}
\]
where we used respectively, for the last two inequalities,
that $1/(N-1) \leq 2/N$ for $N \geq 2$
and that, with no loss of generality,
$a \geq (2\e-1)/(2\e)$.
In particular, $\e \tilde{q} \leq 1/N$.
Combining this with
$\ol{K} \geq \kl\bigl(\tilde{p},\tilde{q}\bigr)$
and~\eqref{eq:lbklmassart}, we have proved
\[
\ol{K} \geq
\tilde{p} \, \ln\!\left(\frac{1}{\e \tilde{q}}\right)
\geq a \ln(N)\,,
\]
which concludes the proof. \qed

\end{document}